\title{Toughness of recursively partitionable graphs}
\author[1]{Calum~Buchanan}
\author[2]{Brandon~Du~Preez}
\author[3]{K.~E.~Perry}
\author[1]{Puck~Rombach}
\affil[1]{{\small University of Vermont\\
\{calum.buchanan, puck.rombach\}@uvm.edu}}
\affil[2]{{\small University of Cape Town\\
brandondup@gmail.com}}
\affil[3]{{\small Soka University of America\\kperry@soka.edu}}
\date{\today}
\tikzset{>=latex}
\begin{document}
	\newtheorem{theorem}{Theorem}
	\newtheorem{corollary}[theorem]{Corollary}
	\newtheorem{thm}[theorem]{Theorem}
	\newtheorem{lemma}[theorem]{Lemma}
	\newtheorem{obs}[theorem]{Observation}
	\newtheorem{prop}[theorem]{Proposition}
	\newtheorem{conj}[theorem]{Conjecture}
	\newtheorem{dfn}[theorem]{Definition}
	\newtheorem{remark}[theorem]{Remark}
	\newtheorem{prob}{Problem}
	\newtheorem{claim}[theorem]{Claim}
	\newtheorem{note}[theorem]{Note}
	\newtheoremstyle{case}{}{}{}{}{}{:}{ }{}
	\theoremstyle{case}
	\newcommand*\midpoint[1]{\overline{#1}}
	\newtheorem{case}{Case}
	
\maketitle
	
\begin{abstract}
	\noindent A simple graph $G=(V,E)$ on $n$ vertices is said to be \emph{recursively partitionable (RP)} if $G\simeq K_1$, or if $G$ is connected and satisfies the following recursive property: for every integer partition $a_1, a_2, \dots, a_k$ of $n$, there is a partition $\{A_1, A_2, \dots, A_k\}$ of $V$ such that each $|A_i|=a_i$, and each induced subgraph $G[A_i]$ is RP ($1\leq i \leq k$).
	We show that if $S$ is a vertex cut of an RP graph $G$ with $|S|\geq 2$, then $G-S$ has at most $3|S|-1$ components.
	Moreover, this bound is sharp for $|S|=3$.
	We present two methods for constructing new RP graphs from old. We use these methods to show that for all positive integers $s$, there exist infinitely many RP graphs with an $s$-vertex cut whose removal leaves $2s+1$ components.
	Additionally, we prove a simple necessary condition for a graph to have an RP spanning tree, and we characterise a class of minimal 2-connected RP graphs.
	\end{abstract}

\section{Introduction}
Let $n$ be a positive integer. 
An \textbf{integer partition} of $n$ is a list $a_1, \dots, a_k$ of positive integers such that 
$a_1 \leq a_2 \leq \dots \leq a_k$ and 
$a_1 + \dots + a_k = n$.
Let $G=(V,E)$ be a graph of order $n$.
An \bm{$(a_1, ..., a_k)$}\textbf{-partition} of $G$ is a partition $\{A_1, \dots, A_k\}$ of $V$ such that $|A_i| = a_i$ for all $i$.
We say the partition has \textbf{connected parts} if, for all $i\in \{1,\dots, k\}$, the induced subgraphs $G[A_i]$ are connected.

In 1976, Gy\"{o}ri and Lov\'{a}sz considered the problem of determining when a graph has an $(a_1,\dots, a_k)$-partition with connected parts and independently proved the following theorem.

\begin{thm}[Gy\"{o}ri-Lov\'{a}sz \cite{divisionsubgraphs_gyori, kpartition_lovasz}] 
	Let $G$ be a graph of order $n$ and $a_1, \dots, a_k$ an integer partition of $n$. 
	If $G$ is $k$-connected, then it has an $(a_1, \dots, a_k)$-partition with connected parts.
	\label{thm_gyori_lovasz}
\end{thm}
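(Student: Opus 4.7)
The plan is to prove a stronger statement by induction on $k$: given a $k$-connected graph $G$ on $n$ vertices, any $k$ distinct prescribed vertices $v_1, \dots, v_k \in V(G)$, and any integer partition $a_1 + \dots + a_k = n$, there exists a connected $(a_1, \dots, a_k)$-partition $\{A_1, \dots, A_k\}$ with $v_i \in A_i$ for each $i$. The base case $k = 1$ is immediate, since $G$ is connected and we may take $A_1 = V(G)$. Strengthening the statement in this way is standard, because it is precisely what the inductive step needs to call on after removing one part.

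For the inductive step, I would aim to peel off a single part and invoke the hypothesis on what remains. Concretely, the subgoal is to construct a connected subset $A_k \subseteq V(G)$ with $v_k \in A_k$, with $A_k \cap \{v_1, \dots, v_{k-1}\} = \emptyset$, with $|A_k| = a_k$, and such that $G - A_k$ is still $(k-1)$-connected. Given such an $A_k$, the induction hypothesis applied to $G - A_k$ with prescribed vertices $v_1, \dots, v_{k-1}$ and partition $a_1, \dots, a_{k-1}$ supplies the remaining parts, completing the partition of $G$.

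To build $A_k$, I would grow it one vertex at a time, starting from $A_k = \{v_k\}$ (noting that $G - v_k$ is $(k-1)$-connected). At each step, I would adjoin a vertex $u$ that is adjacent to $A_k$, does not lie in $\{v_1, \dots, v_{k-1}\}$, and has the property that $G - (A_k \cup \{u\})$ remains $(k-1)$-connected; iterating until $|A_k| = a_k$ produces the required set. The principal lemma to establish is therefore: \emph{as long as $|A_k| < a_k$, such a vertex $u$ exists}. I would prove this by contradiction via Menger's theorem: if every admissible candidate $u$ produced a separator of size at most $k-2$ in $G - (A_k \cup \{u\})$, I would combine a minimal such separator with $u$ and the vertices $v_1, \dots, v_{k-1}$ already committed on the other side, and argue that the resulting set is a cut of size strictly less than $k$ separating $v_k$ from some vertex in $G$, contradicting $k$-connectivity.

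The main obstacle is precisely this existence step: turning local failure to extend $A_k$ into a global forbidden cut of $G$ requires carefully matching internally disjoint $v_k$-paths to the alleged small separators and arguing that certain Menger paths can be rerouted through the prescribed vertices without collision. This is the technically intricate heart of the classical arguments (Gy\H{o}ri's approach is combinatorial and leans on minimum-cut manipulations, while Lov\'asz's recasts the obstruction in terms of a simplicial complex and invokes a topological/homological vanishing result); everything else in the proof is bookkeeping around this single lemma.
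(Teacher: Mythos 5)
Your reduction has a fatal flaw: the ``principal lemma'' you rely on is false, so no amount of care with Menger's theorem can rescue it. You ask for a connected set $A_k$ with $v_k\in A_k$, $|A_k|=a_k$, avoiding $v_1,\dots,v_{k-1}$, such that $G-A_k$ is still $(k-1)$-connected, and you propose to grow it greedily one vertex at a time while preserving $(k-1)$-connectivity of the complement. Take $k=3$ and let $G$ be the wheel $W_7$ (hub $h$ joined to a rim cycle $c_1\cdots c_7$), which is $3$-connected, with $v_3=h$ and, say, $a_3=3$. Any connected set containing $h$ of size at least $2$ must contain a rim vertex, and once the hub and at least one rim vertex are removed, what remains is an induced subgraph of the rim cycle missing a vertex, i.e.\ a disjoint union of paths, which is never $2$-connected. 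So already at the second step of your greedy growth no admissible vertex $u$ exists, and indeed no valid $A_3$ of size $3$ exists at all, even though the Gy\H{o}ri--Lov\'asz conclusion certainly holds for $W_7$. The lesson is that the correct induction cannot demand $(k-1)$-connectivity of the leftover graph; the whole difficulty of the theorem is that connectivity degrades uncontrollably when you peel off a connected piece, which is why Gy\H{o}ri's proof works with carefully adjustable configurations (repeatedly re-shuffling already-built parts) and Lov\'asz's proof replaces the combinatorial obstruction by a topological one, rather than either of them using the peeling scheme you describe.

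For calibration: the paper does not prove this statement at all; it is quoted as a known theorem with citations to Gy\H{o}ri and Lov\'asz, so there is no proof in the paper to compare against. If you want a proof you can actually carry out at this level, the case $k=2$ of your peeling idea does work (via ear decompositions or $st$-numberings), but for $k\ge 3$ you would need to follow one of the genuinely harder published arguments.
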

	
We say $G$ is \textbf{arbitrarily partitionable} (or just \textbf{AP}) if, for every integer partition $a_1, \dots, a_k$ of $n$, there exists an $(a_1, \dots, a_k)$-partition of $V$ with connected parts.
AP graphs were introduced in~\cite{PolytimeAPtripode_barth},
and a polynomial time 
algorithm for determining whether a subdivision of $K_{1,3}$ is AP was provided.

The graph $G$ is \textbf{recursively partitionable} (\textbf{RP}) if $G \simeq K_1$, or $G$ is connected and satisfies the following recursive property: 
for every integer partition $a_1, \dots, a_k$ of $n$, there is an $(a_1, \dots, a_k)$-partition $\{A_1, \dots, A_k\}$ of $V$ such that each $G[A_i]$ is RP.
RP graphs were introduced in~\cite{ravdsuns_baudon, ravdgraphs_baudon}.

In~\cite{ravdgraphs_baudon}, RP trees were characterised (among other results), and in~\cite{ravdsuns_baudon}, a class of RP unicyclic graphs was characterised.
In both papers, the authors made heavy use of the following characterisation of RP graphs.

\begin{prop} \textup{\cite{ravdsuns_baudon}}
	An $n$-vertex graph $G=(V,E)$ is RP if and only if it is connected, and:
	\begin{itemize}
		\item $G\simeq K_1$, or
		\item for every partition $a,b$ of $n$, there is an $(a,b)$-partition $\{A,B\}$ of $V$ such that both $G[A]$ and $G[B]$ are RP.
	\end{itemize}
	\label{prop_simple_RP_char}
\end{prop}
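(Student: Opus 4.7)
The forward implication is immediate: if $G$ is RP, then applying the defining property of RP to each 2-part integer partition $a, b$ of $n$ yields exactly the condition in the second bullet. So the content lies in the reverse direction, namely showing that the 2-part splitting property already forces the full recursive property for partitions into arbitrarily many parts.

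The plan is to prove this reverse direction by induction on $n$. The base case $n = 1$ is immediate since $K_1$ is RP. For $n \geq 2$, assume $G$ is connected on $n$ vertices and satisfies the 2-part condition, and assume inductively that the proposition holds for all graphs of order less than $n$. Given an integer partition $a_1 \leq a_2 \leq \cdots \leq a_k$ of $n$, I want to produce an $(a_1,\dots,a_k)$-partition of $V$ whose parts each induce an RP subgraph. The case $k = 1$ is the trivial partition $\{V\}$, so suppose $k \geq 2$. Apply the 2-part hypothesis with $a = a_1$ and $b = a_2 + \cdots + a_k$: this gives a partition $\{A, B\}$ of $V$ with $|A| = a_1$, $|B| = b$, and both $G[A]$ and $G[B]$ RP.

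Now $G[B]$ has $b = n - a_1 < n$ vertices and is RP by construction. By the inductive hypothesis applied to $G[B]$, the RP property of $G[B]$ is equivalent to the 2-part property on $B$, but more importantly, $G[B]$ being RP means (by the \emph{definition} of RP) that for every integer partition of $b$ there is a compatible partition of $B$ into RP induced subgraphs. In particular, applying this to the partition $a_2, \dots, a_k$ of $b$ yields a partition $\{A_2, \dots, A_k\}$ of $B$ with $|A_i| = a_i$ and each $G[A_i]$ RP. Setting $A_1 = A$ then gives the desired $(a_1, \dots, a_k)$-partition of $V$, which completes the inductive step.

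There is no real obstacle here; the only thing one must be careful about is keeping the two definitions straight in the inductive step. We do not need to invoke the inductive equivalence to conclude that $G[B]$ admits the multi-part partition — once we know $G[B]$ is RP, this follows directly from the RP definition applied to $G[B]$. The induction on $n$ is used only to guarantee well-foundedness of this unfolding, since each RP subgraph encountered has strictly fewer vertices than $G$.
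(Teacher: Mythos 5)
Your proof is correct. Note, however, that the paper offers no proof of Proposition~\ref{prop_simple_RP_char} to compare against: it is stated as a quoted result from \cite{ravdsuns_baudon}, so your argument serves as a self-contained justification rather than an alternative to anything in the text. The argument you give is the natural one, and it is sound: splitting off $A$ with $|A|=a_1$ (valid since $a_1\leq a_2+\dots+a_k$, so $(a_1,\,n-a_1)$ is a legitimate two-part partition) and then applying the definition of RP to $G[B]$ with the partition $a_2,\dots,a_k$ immediately produces the remaining parts. As you observe yourself, the induction on $n$ is in fact superfluous, since the two-part hypothesis already asserts that $G[B]$ is RP and one application of the RP definition to $G[B]$ finishes the job; the only genuine care needed is the definitional convention that the trivial one-part partition imposes no condition, which you handle appropriately.
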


RP graphs were independently introduced (as ``partition wonderful graphs") as a result of investigations into rainbow-cycle-free edge colorings (such as in \cite{rainbow_forbidden_hoffman}), by
Peter Johnson, with the help of Paul Horn, at the MASAMU 2020 workshop.
 
These graphs arise naturally when considering rainbow-cycle-free edge colorings (which are of recent interest in their own right: \cite{gerbner2022jl, janzer2022, kevash2007jl}.)
A \textbf{JL-coloring} of an $n$-vertex graph is an edge coloring using exactly $n-1$ colors that does not contain any rainbow cycles. 
These colorings are studied for $K_n$ in~\cite{devilbiss2022jl} and~\cite{edgecol_gouge}, $K_{n,m}$ in~\cite{johnson2017jl} and complete multipartite graphs in~\cite{johnson20172jl}.

In~\cite{rainbow_forbidden_hoffman},
the authors introduced the following \textbf{standard construction} for creating a JL-coloring of a connected graph $G$:
\begin{enumerate}
	\item If $n>1$, find a partition $V=\{A,B\}$ with connected parts,
	\item color edges between $A$ and $B$ with a single color that will not be used again,
	\item iterate (1) and (2) on $G[A]$ and $G[B]$.
\end{enumerate}
This leads to the main result of \cite{rainbow_forbidden_hoffman}:
\begin{thm}\textup{\cite{rainbow_forbidden_hoffman}}
	Every JL-coloring is obtainable by an instance of the standard construction.
	\label{thm_standard_constrution}
\end{thm}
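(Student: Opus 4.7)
The plan is to proceed by induction on $n = |V(G)|$; the base case $n = 1$ is vacuous. For the inductive step, given a JL-coloring $c$ of a connected $n$-vertex graph $G$, I would aim to produce a color $\alpha$ and a bipartition $\{A, B\}$ of $V(G)$ with $G[A]$ and $G[B]$ connected and $E_\alpha = [A, B]$. Granted such a triple, a counting argument forces the colors on $G[A]$ and $G[B]$ to be disjoint sets of sizes $|A|-1$ and $|B|-1$ respectively, so the induced colorings are rainbow-cycle-free subcolorings using the right number of colors, hence JL. Applying the inductive hypothesis to $G[A]$ and $G[B]$ and splicing with the initial split $(\alpha, A, B)$ then exhibits $c$ as an instance of the standard construction.

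To produce $(\alpha, A, B)$, I would first fix a rainbow spanning tree $T$ of $G$: any transversal of the color classes has $n - 1$ distinctly colored edges, hence contains no (rainbow) cycle, hence spans $G$. Let $e_\gamma$ denote the unique $T$-edge of color $\gamma$. The crucial observation is that for every non-tree edge $f$ of color $\gamma$, $e_\gamma$ lies on the $T$-path between the endpoints of $f$: the fundamental $T$-cycle of $f$ is not rainbow, yet the $T$-path portion is rainbow as a subpath of $T$, so the color repetition must pair $c(f) = \gamma$ with a tree edge on the path. Consequently, removing any tree edge $t$ from $T$ splits $V$ into parts $A_t, B_t$ with $E_{c(t)} \subseteq [A_t, B_t]$, and since $t$ is then the only $T$-edge between $A_t$ and $B_t$, each of $T[A_t]$ and $T[B_t]$ is a spanning tree of its side. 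Call $t$ \emph{good} if equality $E_{c(t)} = [A_t, B_t]$ holds; a good tree edge $t = e_\alpha$ yields the required triple.

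It remains to prove a good tree edge always exists. I would try choosing $t$ extremally -- for example, minimising the number of non-tree edges of color different from $c(t)$ that cross the cut $[A_t, B_t]$ -- and argue that this minimum is zero. Supposing otherwise, a chord $f$ of color $\gamma \neq c(t)$ in the cocircuit of $t$ forces $e_\gamma \neq t$ to lie on the $T$-path of $f$ and hence strictly inside one side of $\{A_t, B_t\}$, which suggests a descent to a more refined tree edge whose cocircuit has fewer bad chords. The main obstacle will be formalising this descent into a valid contradiction: small examples show that naive criteria (such as minimising the smaller side of $\{A_t, B_t\}$) fail, because an offending chord may bypass a deep tree edge by matching a shallower one. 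The correct extremal choice must track the laminar nesting of the $T$-paths of non-tree edges, and carefully identifying the right partial order so that a minimal element is certified good is the core technical step.
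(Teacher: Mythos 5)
This theorem is not proved in the paper at all; it is imported verbatim from \cite{rainbow_forbidden_hoffman}, so there is no in-paper argument to measure you against. Judged on its own, your write-up gets the easy half right and leaves the hard half open. The reduction is sound: a transversal of the colour classes is a rainbow spanning tree $T$; for a non-tree edge $f$ of colour $\gamma$ the fundamental cycle forces $e_\gamma$ onto the $T$-path of $f$, so $E_{c(t)}\subseteq [A_t,B_t]$ for every tree edge $t$ with both sides connected; and once some colour class equals a full cut, the count $(|A|-1)+(|B|-1)=n-2$ forces the two sides to carry disjoint colour sets of the right sizes, so induction splices everything together. All of that is correct, and it correctly identifies that the theorem is equivalent to the existence of a ``good'' tree edge.

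But that existence statement is the entire content of the theorem, and you explicitly leave it unproven: you propose an extremal choice of $t$, observe that naive criteria fail, and defer ``identifying the right partial order'' as ``the core technical step.'' That is a genuine gap, not a routine detail. Concretely, if no tree edge is good, then for every colour $\beta$ some edge of a different colour $\gamma$ crosses the fundamental cut of $e_\beta$; since there are finitely many colours this yields a cyclic chain of colours each crossing another's cut, and to finish one must manufacture a contradiction (in effect, a rainbow cycle) from such a chain. Nothing in your sketch shows that your proposed minimisation decreases along this chain, and your own discussion concedes that the nesting of the fundamental paths can defeat the obvious candidates. This is exactly where the argument of \cite{rainbow_forbidden_hoffman} does its real work, so as it stands the proposal is an accurate reduction plus an unproven key claim rather than a proof.
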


\begin{corollary} \textup{\cite{rainbow_forbidden_hoffman}}
	Every JL-coloring of a connected graph $G=(V,E)$ is the restriction of a JL-coloring of the complete graph with vertex set $V$.
	\label{cor_jlcol_restriction}
\end{corollary}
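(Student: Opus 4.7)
The plan is to prove the corollary by induction on $n=|V|$, using Theorem~\ref{thm_standard_constrution} to deconstruct the given JL-coloring into a form that naturally suggests how to extend it. The base case $n=1$ is vacuous since $K_1$ has no edges. For the inductive step, suppose the claim holds for all connected graphs on fewer than $n$ vertices, and let $c$ be a JL-coloring of a connected graph $G=(V,E)$ with $|V|=n\geq 2$.

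By Theorem~\ref{thm_standard_constrution}, $c$ arises from an instance of the standard construction. Thus there is a partition $\{A,B\}$ of $V$ with $G[A]$ and $G[B]$ connected, a color $\gamma$ not appearing elsewhere that is assigned to every edge of $G$ with one endpoint in $A$ and one in $B$, and the restrictions $c|_{E(G[A])}$ and $c|_{E(G[B])}$ are themselves JL-colorings of $G[A]$ and $G[B]$ (using $|A|-1$ and $|B|-1$ colors respectively, disjoint from each other and from $\{\gamma\}$). Apply the inductive hypothesis to obtain JL-colorings $c_A$ of $K_A$ and $c_B$ of $K_B$ extending $c|_{E(G[A])}$ and $c|_{E(G[B])}$ respectively.

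Now define a coloring $\widehat{c}$ of $K_V$ by using $c_A$ on edges inside $A$, $c_B$ on edges inside $B$, and color $\gamma$ on every edge with one endpoint in each of $A,B$. This coloring uses exactly $(|A|-1)+(|B|-1)+1=n-1$ colors and agrees with $c$ on $E(G)$. It remains to verify $\widehat{c}$ has no rainbow cycle. Any cycle $C$ in $K_V$ lying entirely inside $K_A$ or entirely inside $K_B$ is non-rainbow by the inductive hypothesis. Otherwise $C$ crosses the cut $(A,B)$, and since cycles cross any cut an even number of times, $C$ contains at least two crossing edges, both of color $\gamma$, so $C$ is again non-rainbow.

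The only nontrivial point in this argument is invoking Theorem~\ref{thm_standard_constrution} to guarantee that the restrictions to $G[A]$ and $G[B]$ are themselves JL-colorings on disjoint palettes; once that is in hand, the induction and the cut-parity observation make the extension essentially automatic. I do not anticipate a significant obstacle beyond bookkeeping the color classes.
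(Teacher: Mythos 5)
Your proof is correct and is essentially the intended argument: the paper states this corollary as a cited result without proof, and the natural justification is exactly what you give --- replay the standard construction of Theorem~\ref{thm_standard_constrution} on the complete graph, which your induction together with the cut-parity check formalizes. The only tacit point is that the inductively obtained colorings $c_A$, $c_B$ must use the same palettes as the restrictions they extend (so that $\widehat{c}$ uses exactly $n-1$ colors and the three palettes stay disjoint), but this is immediate since a JL-coloring of $K_A$ uses exactly $|A|-1$ colors and already contains the $|A|-1$ colors of $c|_{E(G[A])}$.
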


Combining Proposition \ref{prop_simple_RP_char}, Theorem \ref{thm_standard_constrution} and Corollary \ref{cor_jlcol_restriction} yields the following observation of Johnson:

\begin{obs}
	A connected graph $G=(V,E)$ of order $n$ is RP if and only if every JL-coloring $\varphi$ of $K_n$ can be restricted to a JL-coloring $\varphi|_E$ of a copy of $G$.
	\label{obs_jlcolor_rp_char}
\end{obs}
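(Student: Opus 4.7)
The plan is to prove both directions by induction on $n$, with the base case $n=1$ being trivial.

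For the forward direction, let $\varphi$ be a JL-coloring of $K_n$. Theorem~\ref{thm_standard_constrution} expresses $\varphi$ as arising from a standard construction with a top bipartition $V(K_n) = A \sqcup B$ of sizes $a$, $b$, a fresh color $c$ on all $A$-$B$ edges, and JL-colorings $\varphi_A := \varphi|_{K[A]}$ and $\varphi_B := \varphi|_{K[B]}$. Since $G$ is RP, Proposition~\ref{prop_simple_RP_char} supplies an $(a,b)$-partition $\{A', B'\}$ of $V(G)$ with both $G[A']$ and $G[B']$ RP, and the inductive hypothesis provides bijections $f_A \colon V(G[A']) \to A$ and $f_B \colon V(G[B']) \to B$ along which $\varphi_A$ and $\varphi_B$ restrict to JL-colorings of the copies $f_A(G[A'])$ and $f_B(G[B'])$. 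Setting $f = f_A \sqcup f_B$ and using the connectedness of $G$ to guarantee at least one $A'$-$B'$ edge of $G$ (which becomes a $c$-colored crossing edge of $f(G)$), the restriction $\varphi|_{f(E(G))}$ uses all $n-1$ colors and inherits the absence of rainbow cycles from $\varphi$, so it is a JL-coloring of $f(G)$.

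For the backward direction, by Proposition~\ref{prop_simple_RP_char} it suffices to show that for every bipartition $a$, $b$ of $n$ there is an $(a,b)$-partition $\{A', B'\}$ of $V(G)$ with $G[A']$, $G[B']$ both RP. Fix $a$, $b$ and a bipartition $V(K_n) = A \sqcup B$ of matching sizes. For any pair of JL-colorings $\varphi_A$, $\varphi_B$ of $K[A]$, $K[B]$ together with a fresh crossing color $c$, the assembled coloring $\varphi$ is a JL-coloring of $K_n$; the hypothesis supplies $f$ with $\varphi|_{f(E(G))}$ a JL-coloring of $f(G)$. Setting $A' = f^{-1}(A)$ and $B' = f^{-1}(B)$, a color-counting argument (all $n-1$ colors appear in $\varphi|_{f(G)}$, and the $a-1$ colors of $\varphi_A$ live only in $K[A]$) forces $\varphi_A|_{f(G[A'])}$ to be a JL-coloring of the copy $f(G[A'])$, and symmetrically for $B'$.

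The main obstacle is to upgrade this ``one JL-coloring restricts" conclusion to the statement demanded by the inductive hypothesis: \emph{every} JL-coloring of $K_a$ restricts to a JL-coloring of some copy of $G[A']$, so that $G[A']$ is RP. I would address this through a rigidity lemma derived from Theorem~\ref{thm_standard_constrution}: applying it to $\varphi|_{f(G)}$ yields a standard construction tree on $f(G)$ in which $c$ must appear at the top split, since any deeper placement would force $c$-edges to lie inside a single part of a previous bipartition---incompatible with the fact that $c$-edges of $K_n$ lie only between $A$ and $B$. This forces the top split of $f(G)$'s tree to coincide with $\{A,B\}$, and iterating the argument level by level (using Corollary~\ref{cor_jlcol_restriction} to guarantee the sub-colorings consistently extend to the full complete subgraphs) shows that the construction tree of $f(G)$ matches that of $\varphi$ throughout. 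Combined with the freedom to vary $\varphi_A$ across all JL-colorings of $K[A]$, this rigidity pins down the partition $\{A', B'\}$ sufficiently to apply the inductive hypothesis and conclude $G[A'], G[B']$ are RP.
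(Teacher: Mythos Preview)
The paper does not actually spell out a proof of this observation; it only states that it follows by combining Proposition~\ref{prop_simple_RP_char}, Theorem~\ref{thm_standard_constrution}, and Corollary~\ref{cor_jlcol_restriction}. So there is no detailed argument in the paper to compare against. Your forward direction is correct and is the natural induction one would expect.

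Your backward direction, however, has a real gap. The rigidity lemma you sketch says (at best) that some standard-construction tree for $\varphi|_{f(G)}$ has top split $\{A,B\}$. But this constrains the partition of $V(K_n)$, not the bijection $f$: when you change $\varphi_A$ to another JL-coloring $\psi$ of $K[A]$, the hypothesis hands you a \emph{new} embedding $f_\psi$, and hence a potentially different $A'_\psi = f_\psi^{-1}(A)$. So what your argument actually proves is: for every JL-coloring $\psi$ of $K_a$ there exists \emph{some} $A'_\psi$ of size $a$ with $G[A'_\psi]$ connected and $\psi$ restricting to a JL-coloring of a copy of $G[A'_\psi]$. To apply the inductive hypothesis and conclude that $G[A']$ is RP, you need a \emph{single} $A'$ that works for all $\psi$ simultaneously. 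Your closing sentence (``pins down the partition $\{A',B'\}$ sufficiently'') is exactly where this quantifier swap is being assumed without justification; rigidity of the top split does not supply it.

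Phrased differently: what your backward argument really establishes is that for every full binary tree shape $S$ on $n$ leaves, $G$ admits an $S$-decomposition into connected pieces. What remains is to pass from this $\forall S\,\exists\,\text{decomposition}$ statement to the alternating $\forall(a,b)\,\exists(A',B')\,\forall\cdots$ structure in the recursive definition of RP. That passage is not automatic---an abstract covering argument fails, since one can cover all pairs $(S_L,S_R)$ using candidates $(A',B')$ none of which individually handles every $S_L$---and your rigidity lemma does not address it.
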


The rest of this paper is organised as follows. 
In Section~\ref{sec:defs}, we define useful graph-theoretical tools and constructions that will be used throughout the paper. 
In Section~\ref{sec:elresults}, we list basic observations about the properties of AP and RP graphs. 
In Section~\ref{sec:build}, we introduce recursive constructions of RP graphs, which we later use to find infinite classes of RP graphs with a given toughness. 
It is easy to see that if a graph has an AP (RP) spanning tree, then it is AP (RP). 
In Section~\ref{sec:spanning}, we take a more detailed look at spanning subgraphs of RP graphs and provide a necessary condition for an RP graph to have a spanning tree homeomorphic to $K_{1,k}$. 
We also show that if an RP graph has an RP spanning tree, then for every $S\subseteq V$ we have $c(G-S)\leq |S|+2$. 
In Section~\ref{sec:lowbounds}, we find lower bounds for the maximum possible values of $c(G-S)$ for $S\subseteq V$ in an RP graph $G$. 
In particular, we show that, for any $s$, there exists an infinite family of RP graphs with a $s$-vertex cut whose removal leaves $2s+1$ components. 
In Section~\ref{sec:minimal}, we show that there exists a finite set of minimal RP graphs for any given possible cut size $|S|$ and $c(G-S)$.
In Section~\ref{sec:upbound}, we bound $c(G-S)$ from above, by showing that in an RP graph $G$, for any $S \subseteq V$, we have $c(G-S)\leq 3 |S|-1$, which shows that every RP graph is $\tfrac{1}{3}$-tough. 
Finally, in Section~\ref{sec:quests}, we list a set of open questions.

	\section{Additional definitions}\label{sec:defs}
	\subsection{Properties and parameters}
	
	For a positive integer $k$, let $\bm{E_k}$ denote the \textbf{empty graph} with $k$ vertices and no edges.
	If $G$ is a graph, then $\bm{n(G)}$ is its \textbf{order} (number of vertices), $\bm{m(G)}$ its number of edges. Let $\bm{\alpha(G)}$ denote its \textbf{independence number} (the order $k$ of a maximum induced $E_k$ subgraph), and $\bm{\kappa(G)}$ its \textbf{vertex connectivity} (the minimum cardinality of a set of vertices whose removal disconnects $G$).
	Let \[\bm{\sigma(G)} = \min\{d(u) + d(v) : \text{$u$ and $v$ are non-adjacent vertices of $G$}\}.\]

	A graph is \textbf{traceable} if it has a spanning path (i.e., a Hamiltonian Path) and \textbf{Hamiltonian} if it has a spanning cycle (i.e., a Hamiltonian cycle).
	
	A \textbf{perfect matching} of a graph is a set $M$ of edges that are all pairwise disjoint, such that every vertex is incident with an edge in $M$. 
	A \textbf{near-perfect matching} is a set $M$ of edges that are all pairwise disjoint, such that every vertex except for one is incident with an edge in $M$.
	A graph is \textbf{(near)} \textbf{matchable} if it has a (near) perfect matching.
	
	Let $G_1 = (V_1, E_1), \dots, G_n = (V_n, E_n)$ be graphs. 
	The \textbf{sequential join} $G_1 + \dots + G_n$ is the graph formed by taking the graph union $(V_1\cup\dots\cup V_n, E_1\cup\dots\cup E_n)$ and adding to it all edges of the form $uv$, where $u\in V_i$ and $v\in V_{i+1}$ ($1\leq i < n$).
	
	Let $G=(V,E)$ be a graph.
	Denote by \bm{$c(G)$} the number of components of $G$.
	In particular, for a connected graph $G$, if $S\subseteq V$, then $c(G-S) \geq 2$ if and only if $S$ is a cut. 
	The \textbf{toughness} \bm{$\tau(G)$} of $G$ is
	\[
		\tau(G) = \min\left\{ \frac{|S|}{c(G-S)} : S\subseteq V, c(G-S) \geq 2 \right\}.
	\]
	For a positive real number $r$, we say $G$ is \textbf{$r$-tough} if $\tau(G)\geq r$.

	\subsection{Graph constructions}
	
	In this section, we define graph constructions that we will use throughout the paper. See Figures~\ref{fig_tree_examples} and~\ref{fig_pw_no_pw_tree} for examples.
	
	Let $a,b,c$ be positive integers. 
	The \textbf{tripode} graph $T(a,b,c)$ is the tree that has one degree 3 vertex, $v$, the removal of which leaves three paths having $a$, $b$ and $c$ vertices. 
	
	Let $k$ be a positive integer, and $b_i$, $1\leq i\leq k$ be non-negative integers. 
	The \textbf{balloon} graph $B(b_1, b_2, \dots, b_k)$ consists of $k$ paths $P_i$ and two non-adjacent vertices $u$ and $v$. Further, $n(P_i) = b_i$, the first vertex of $P_i$ is adjacent to $u$, and the last vertex of $P_i$ is adjacent to $v$.

	Let $k$ be a positive integer, and let $b_0, b_1, \dots, b_k$ be non-negative integers. 
	The \textbf{semistar} $K_{b_0}(b_1, b_2, \dots, b_k)$ is the graph formed from the disjoint union of (possibly null) cliques $K_{b_0}$, $K_{b_1}$, $\dots$, $K_{b_k}$ by adding every possible edge between a vertex of $K_{b_0}$ and a vertex not belonging to $K_{b_0}$. Symbolically, 
	\[
		K_{b_0}(b_1, \dots, b_k) = K_{b_0} + \left( \bigcup_{i=1}^k K_{b_i} \right).
	\]
	Note that $K_{b}(0,\dots, 0) \simeq K_0(0,\dots, b, \dots, 0)\simeq K_b$, and that $K_{b_0}(b_1,\dots,b_k,0) \simeq \\ K_{b_0}(b_1,\dots,b_k)$.

	Suppose $K_{b_0}(b_1, \dots, b_k)$ is RP, and $\{G_i\}_{i=0} ^k$ is a set of RP graphs such that $n(G_i) = b_i$. A graph $H$ is a \textbf{replacement graph} for $K_{b_0}(b_1, \dots, b_k)$ with respect to $\{G_i\}_{i=0} ^k$ if
	\[
		H = G_0 + \left( \bigcup_{i=1}^k G_i \right).
	\]
	
	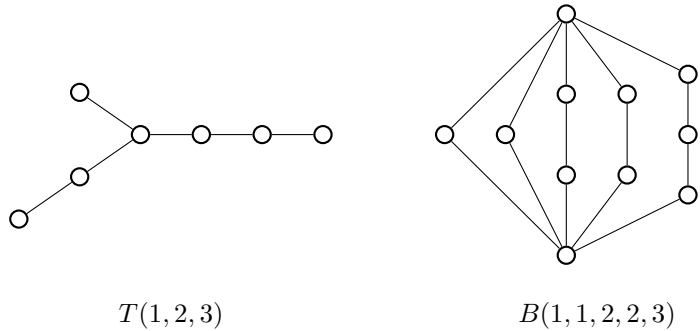
\begin{figure}[h]
	\begin{center}
	\begin{tikzpicture}
		[scale=0.8,inner sep=0.8mm, 
		vertex/.style={circle,thick,draw}, 
		thickedge/.style={line width=2pt}] 
		
		\begin{scope}[shift={(0,0)}]
			\node[vertex] (0) at (0,0) {};
			\node[vertex] (1) at (-1,0.7) {};
			\node[vertex] (2) at (-1,-0.7) {};
			\node[vertex] (2') at (-2,-1.4) {};
			\node[vertex] (3) at (1,0) {};
			\node[vertex] (4) at (2,0) {};
			\node[vertex] (5) at (3,0) {};
			
			\draw (2')--(2)--(0)--(1) (0)--(3)--(4)--(5);
			
			\node at (0.5,-3) {$T(1,2,3)$};
		\end{scope}
		
		\begin{scope}[shift={(7,0)}]
			\node[vertex] (a) at (0,2) {};
			\node[vertex] (b) at (0,-2) {};
			\node[vertex] (1) at (-2,0) {};
			\node[vertex] (2) at (-1,0) {};
			\node[vertex] (3) at (0,0.67) {};
			\node[vertex] (4) at (0,-0.67) {};
			\node[vertex] (5) at (1,0.67) {};
			\node[vertex] (6) at (1,-0.67) {};
			\node[vertex] (7) at (2,1) {};
			\node[vertex] (8) at (2,0) {};
			\node[vertex] (9) at (2,-1) {};
			
			\draw (a)--(1)--(b);
			\draw (a)--(2)--(b);
			\draw (a)--(3)--(4)--(b);
			\draw (a)--(5)--(6)--(b);
			\draw (a)--(7)--(8)--(9)--(b);
			
			\node at (0.5,-3) {$B(1,1,2,2,3)$};
		\end{scope}
		
	\end{tikzpicture}
	\end{center}
	\caption{The tripode $T(1,2,3)$ and the balloon $B(1,1,2,2,3)$.}
	\label{fig_tree_examples}
	\end{figure}

	\begin{figure}[h]
	\begin{center}
	\begin{tikzpicture}
		[scale=0.8,inner sep=0.6mm, 
		vertex/.style={circle,thick,draw}, 
		thickedge/.style={line width=2pt}] 
		
		\begin{scope}[shift={(0,0)}]
			\node[vertex] (x) at (-0.5,0) {};
			\node[vertex] (y) at (0.5,0) {};
			
			\node[vertex] (1) at (-4,3) {};
			\node[vertex] (2) at (-3,3) {};
			\node[vertex] (3) at (-2,3) {};
			
			\node[vertex] (4) at (-1,3) {};
			\node[vertex] (5) at (-0,3) {};
			
			\node[vertex] (6) at (1,2.7) {};
			\node[vertex] (7) at (2,3.4) {};
			\node[vertex] (8) at (3,3.4) {};
			\node[vertex] (9) at (4,2.7) {};
			
			\foreach \i in {1,...,9}
				{
					\draw (x)--(\i)--(y);
				}
			\draw (4)--(5);
			\draw (6)--(7)--(8)--(9)--(6) (7)--(9) (6)--(8);
			\draw (x)--(y);
		\end{scope}

	\end{tikzpicture}
	\end{center}
	\caption{The semistar $K_2(1,1,1,2,4)$.}
	\label{fig_pw_no_pw_tree}
	\end{figure}
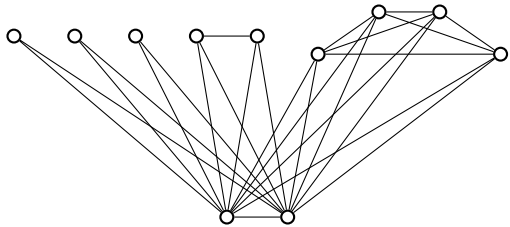

	\section{Elementary and known results}\label{sec:elresults}
	
	In this section, we list a number of useful literature results on AP and RP graphs.
	We make frequent use of these results and observations, particularly Lemma \ref{lem_spanning_subgraph}, Theorem \ref{thm:pw_tree_characterisation} and Observations \ref{obs_tripode_balloon_universal} and \ref{obs_K_universal}.
	We also present a characterisation of AP and RP complete multipartite graphs.
	
	\begin{lemma}\textup{\cite{ravdgraphs_baudon}}
		If a graph $G$ has an RP (AP) spanning subgraph, then $G$ is itself RP (AP).
		\label{lem_spanning_subgraph}	
	\end{lemma}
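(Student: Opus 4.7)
The plan is to handle the AP case directly and the RP case by induction on the order $n$, using Proposition~\ref{prop_simple_RP_char} to reduce to two-part partitions.

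For the AP case, let $H$ be an AP spanning subgraph of $G$, so $V(H)=V(G)$ and $E(H)\subseteq E(G)$. Given any integer partition $a_1,\dots,a_k$ of $n$, use the AP property of $H$ to find an $(a_1,\dots,a_k)$-partition $\{A_1,\dots,A_k\}$ of $V(H)=V(G)$ with each $H[A_i]$ connected. For each $i$, $H[A_i]$ is a spanning subgraph of $G[A_i]$, so adding the extra edges of $G$ within $A_i$ preserves connectivity; thus each $G[A_i]$ is connected, and $G$ is AP.

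For the RP case, I would induct on $n=n(G)$. The base case $n=1$ is immediate, since then $G\simeq H\simeq K_1$. For the inductive step, suppose the claim holds for all graphs of order less than $n$, and let $H$ be an RP spanning subgraph of $G$. First, $G$ is connected since $H$ is. By Proposition~\ref{prop_simple_RP_char}, it suffices to show that for every partition $a,b$ of $n$ there is an $(a,b)$-partition $\{A,B\}$ of $V(G)$ such that both $G[A]$ and $G[B]$ are RP. Since $H$ is RP, the proposition applied to $H$ yields such a partition $\{A,B\}$ for which $H[A]$ and $H[B]$ are RP. Now $H[A]$ is a spanning subgraph of $G[A]$ and $H[B]$ is a spanning subgraph of $G[B]$, and both $G[A]$ and $G[B]$ have order strictly less than $n$, so the inductive hypothesis gives that $G[A]$ and $G[B]$ are RP. Hence $G$ is RP.

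There is no real obstacle here: the AP case uses only the monotonicity of connectedness under edge addition, and the RP case follows from the same observation combined with the recursive characterisation (Proposition~\ref{prop_simple_RP_char}) and a routine induction on $n$. The only subtlety worth flagging is that one should work with the two-part characterisation rather than attempt to pass all the way down the recursion in one step, since otherwise one must simultaneously verify an RP property at every level of decomposition.
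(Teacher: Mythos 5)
Your proof is correct. There is nothing in the paper to compare it against: Lemma~\ref{lem_spanning_subgraph} is stated with a citation to \cite{ravdgraphs_baudon} and is not proved in this paper, so your argument stands on its own. It is the standard one and it is sound: the AP case uses only that adding edges within each part preserves connectedness, and the RP case is a clean induction on $n$ through the two-part characterisation of Proposition~\ref{prop_simple_RP_char}, noting that $H[A]$ and $H[B]$ are RP spanning subgraphs of $G[A]$ and $G[B]$, which have smaller order. One small remark: the reduction to two-part partitions is convenient but not essential; the same induction goes through with the full $k$-part definition, since for $k\geq 2$ every part has order strictly less than $n$, and the $k=1$ case imposes no condition.
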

	
	\begin{obs}\textup{\cite{ravdgraphs_baudon}}
		Let $G$ be a graph. 
		The following implications for properties of $G$ hold, and none of their converses hold:
		\[
			\text{traceable} \implies \text{RP} \implies \text{AP} \implies \text{(near) matchable}.
		\]

		\label{obs_pw_is_pi_is_pg}
	\end{obs}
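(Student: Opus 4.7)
The plan is to verify each of the three forward implications in sequence, and then to rule out the converses with small explicit witnesses.

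For \emph{traceable $\Rightarrow$ RP}, I would first prove by induction on $n$ that the path $P_n$ itself is RP: given an integer partition $a_1,\dots,a_k$ of $n$, cut $P_n$ into $k$ consecutive subpaths of orders $a_1,\dots,a_k$, each of which is a shorter path and hence RP by the inductive hypothesis (using Proposition~\ref{prop_simple_RP_char} to reduce to the two-part case if convenient). If $G$ is traceable, a Hamiltonian path gives an RP spanning subgraph, and Lemma~\ref{lem_spanning_subgraph} then yields that $G$ is RP.

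The implication \emph{RP $\Rightarrow$ AP} is immediate from the definitions: every RP graph is connected, so the parts of any RP partition are automatically connected, and an RP partition is therefore also a partition with connected parts. For \emph{AP $\Rightarrow$ (near) matchable}, I would apply the AP property to the integer partition of $n$ into copies of $2$, appending a single $1$ when $n$ is odd. Each part of size $2$ is connected and so contains an edge; these edges are pairwise vertex-disjoint and together cover either all vertices or all but one, producing the required (near) perfect matching.

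To rule out the converses I would give small separating examples. The tripode $T(1,1,2)$ is a tree with a vertex of degree $3$, so it admits no Hamiltonian path, but a short case analysis via Proposition~\ref{prop_simple_RP_char} (the only nontrivial split is $(2,3)$, realised by $\{c,d\}$ and the path $\{a,v,b\}$) shows that it is RP; this gives RP $\not\Rightarrow$ traceable. For AP-but-not-RP and matchable-but-not-AP, I would appeal to (or reconstruct from) small examples in~\cite{ravdgraphs_baudon}. The main obstacle is really this last step, since certifying that a graph \emph{fails} to be AP or RP requires ruling out all partitions of some problematic type; but the separating graphs can be taken small enough that this is a finite and routine case check.
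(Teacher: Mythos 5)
The paper offers no proof of this observation to compare against: it is imported verbatim from \cite{ravdgraphs_baudon}. Judged on its own, your argument for the three implications is correct and standard: consecutive subpaths show that paths are RP, Lemma~\ref{lem_spanning_subgraph} transfers this to any traceable graph; the parts of an RP partition are RP and hence connected, giving AP; and applying the AP property to the integer partition $2,2,\dots,2$ (with a single $1$ appended when $n$ is odd) yields a (near) perfect matching. Your witness for RP $\not\Rightarrow$ traceable is also sound, with one small caveat: besides the $(2,3)$ split you must also realise the $(1,4)$ split of $T(1,1,2)$, and there the choice matters --- deleting a pendant vertex of a length-$1$ leg leaves a $P_4$, whereas deleting the end of the length-$2$ leg leaves $K_{1,3}$, which is not RP.

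The genuine gap is in ruling out the converses. The observation asserts that \emph{none} of the three converses hold, so a complete proof must exhibit an AP graph that is not RP and a (near) matchable graph that is not AP; you defer both to ``small examples in \cite{ravdgraphs_baudon}'', which amounts to citing the very statement being proved and leaves two of the three separations unestablished. The matchable-but-not-AP separation is easy to supply: $T(1,2,2)$ has the perfect matching formed by the edge to its pendant leg together with the outer edge of each length-$2$ leg, yet it admits no $(3,3)$-partition with connected parts (every connected $3$-set must contain the centre, and its complement then meets two different legs and is disconnected), so it is not AP. The AP-but-not-RP separation is the piece that truly requires work: one must produce a concrete graph, certify that it is AP (a full check over integer partitions, or an appeal to a known characterisation such as the AP-tripode results of \cite{PolytimeAPtripode_barth}), and certify that it is not RP (for a tripode, by falling outside Table~\ref{tab_Tabc_RP} together with Theorem~\ref{thm:pw_tree_characterisation}); nothing in your proposal carries out this verification, so as written the claim that RP is strictly stronger than AP is not proved.
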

	
	The following lemma, by
	Bondy and Chvatal~\cite{methodstability_bondychvatal} is a somewhat well-known variation of Ore's Hamiltonicity Theorem~\cite{hamiltonian_ore}. 
	
	\begin{lemma}\textup{\cite{methodstability_bondychvatal}}
		Let $G$ be a graph of order $n$. 
		If $\sigma(G) \geq n-1$, then $G$ is traceable.
		\label{lem_ore_traceable}
	\end{lemma}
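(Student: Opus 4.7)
My plan is to derive this lemma as an immediate consequence of Ore's Hamiltonicity Theorem \cite{hamiltonian_ore}, via the standard ``universal vertex'' extension. First, dispose of the trivial cases $n \leq 2$: any graph on one or two vertices that satisfies the hypothesis is either $K_1$ or $K_2$, both of which are traceable.

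For $n \geq 3$, form the auxiliary graph $G'$ by adding to $G$ a new vertex $v_0$ adjacent to every vertex of $V(G)$; then $n(G') = n+1$. Since $v_0$ is universal in $G'$, any two non-adjacent vertices $u, w$ of $G'$ both lie in $V(G)$ and are non-adjacent in $G$. Their degrees in $G'$ are each one larger than in $G$, so
\[
d_{G'}(u) + d_{G'}(w) \;=\; d_G(u) + d_G(w) + 2 \;\geq\; (n-1) + 2 \;=\; n(G').
\]
Thus $\sigma(G') \geq n(G')$, so Ore's theorem yields a Hamiltonian cycle $C$ in $G'$. Since $C$ is spanning, $v_0 \in V(C)$; removing $v_0$ together with its two incident edges on $C$ leaves a Hamiltonian path in $G$, which is the desired spanning path.

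The proof presents essentially no obstacles: the extension is engineered so that adding a single vertex bumps $\sigma$ up by $2$ while increasing $n$ by $1$, which is exactly what converts the traceability hypothesis $\sigma(G) \geq n-1$ into the Hamiltonicity hypothesis $\sigma(G') \geq n(G')$. The only care needed is for small $n$, handled above, and the observation that the two neighbours of $v_0$ on $C$ give the endpoints of the resulting Hamiltonian path in $G$.
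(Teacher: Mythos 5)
Your proof is correct. The paper states this lemma as a cited result of Bondy and Chv\'{a}tal and gives no proof of its own, but your argument --- adding a universal vertex $v_0$, checking $\sigma(G')\geq n(G')$, applying Ore's Theorem (Theorem~\ref{thm_ore_hamiltonian}), and deleting $v_0$ from the Hamiltonian cycle --- is exactly the device the paper itself uses in the proof of Proposition~\ref{prop_char_splitpartite}, so it is essentially the same approach, carried out correctly including the small cases $n\leq 2$.
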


	\begin{thm}[Ore's Theorem \cite{hamiltonian_ore}]
		Let $G$ be a graph of order $n$. 
		If $\sigma(G) \geq n$, then $G$ is Hamiltonian.
		\label{thm_ore_hamiltonian}
	\end{thm}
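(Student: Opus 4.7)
The plan is to argue by contradiction using an edge-maximality (extremal) argument. Suppose $G$ satisfies $\sigma(G)\geq n$ but is not Hamiltonian; among all non-Hamiltonian graphs on the vertex set $V(G)$ that still satisfy the degree condition, I take $G$ to be edge-maximal. Since $K_n$ is Hamiltonian for $n\geq 3$ (and the statement is vacuous or immediate for $n\leq 2$), $G$ cannot be complete, so there exist non-adjacent vertices $u$ and $v$. By edge-maximality, $G+uv$ is Hamiltonian, and every Hamiltonian cycle of $G+uv$ must use the new edge $uv$; deleting this edge from such a cycle leaves a Hamiltonian path $v_1 v_2 \cdots v_n$ in $G$ with $v_1 = u$ and $v_n = v$.

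Next I would run the classical rotation/pigeonhole step. Define
\[
S = \{\, i \in \{1,\ldots,n-1\} : u v_{i+1} \in E(G)\,\}, \qquad T = \{\, i \in \{1,\ldots,n-1\} : v_i v \in E(G)\,\},
\]
so that $|S| = d(u)$ and $|T| = d(v)$. If some index $i$ lies in $S \cap T$, then
\[
u,\; v_{i+1},\; v_{i+2},\; \ldots,\; v_n,\; v_i,\; v_{i-1},\; \ldots,\; v_1 = u
\]
is a Hamiltonian cycle of $G$, contradicting the choice of $G$. Hence $S$ and $T$ are disjoint subsets of an $(n-1)$-element set, giving $d(u)+d(v) = |S|+|T| \leq n-1$. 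Since $u$ and $v$ are non-adjacent, this forces $\sigma(G)\leq n-1$, contradicting the hypothesis.

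The main obstacle is the rotation observation: one must see that whenever $u$ is adjacent to $v_{i+1}$ and $v$ is adjacent to $v_i$ for a common index $i$, the Hamiltonian path $v_1\cdots v_n$ can be closed into a Hamiltonian cycle by walking $v_1\cdots v_i$, jumping via $v_iv$ to $v_n$, traversing back down to $v_{i+1}$, and closing via $v_{i+1}u$. Once this insight is in hand, the remainder is a routine degree count combined with the edge-maximal setup, and no delicate case analysis is needed.
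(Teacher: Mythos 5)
Your proposal is correct: it is the standard extremal (edge-maximal) proof of Ore's theorem with the classical rotation/pigeonhole step, and the degree count $d(u)+d(v)=|S|+|T|\leq n-1$ contradicting $\sigma(G)\geq n$ is carried out properly (with the tacit convention $n\geq 3$, as usual for this theorem). Note that the paper does not prove this statement at all --- it is quoted from Ore's 1960 paper as a known result --- so your argument simply supplies the classical proof of the cited theorem rather than diverging from anything in the paper.
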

	
	With Lemma \ref{lem_ore_traceable} we easily prove the following.
	
	\begin{prop}
		Let $G$ be a graph with $\sigma(G) \geq 2k$ and order $n$.
		If $n\leq 2k+1$, then $G$ is RP (and therefore AP), and this bound is sharp.
		\label{thm_pw_order_bound}
	\end{prop}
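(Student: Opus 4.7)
The positive direction reduces directly to results already quoted. Because $n \le 2k+1$ we have
\[
\sigma(G) \;\ge\; 2k \;\ge\; n-1,
\]
so Lemma~\ref{lem_ore_traceable} gives that $G$ is traceable. By the implication chain in Observation~\ref{obs_pw_is_pi_is_pg}, traceability implies RP, which in turn implies AP, so both conclusions follow immediately. There is no real obstacle here; the step is just one application of the stated Bondy--Chv\'atal lemma.

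For sharpness of the hypothesis $n \le 2k+1$, the plan is to exhibit an explicit family of graphs on $n = 2k+2$ vertices with $\sigma \ge 2k$ that fail already the weakest conclusion in the chain (matchability), and hence cannot be AP or RP. The natural candidate is the complete bipartite graph $K_{k,k+2}$. Since every pair of non-adjacent vertices lies entirely on one side or the other, a quick degree-sum computation (two vertices on the small side contribute $2(k+2)$, two on the large side contribute $2k$) gives $\sigma(K_{k,k+2}) = 2k$, meeting the hypothesis. However, $K_{k,k+2}$ has no perfect matching: the $(k+2)$-vertex side has joint neighbourhood of size only $k$, violating Hall's condition. As $n=2k+2$ is even, $K_{k,k+2}$ is therefore not matchable, and by Observation~\ref{obs_pw_is_pi_is_pg} is not AP, hence not RP. This rules out any improvement of the hypothesis to $n \le 2k+2$.

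The main item to check carefully is simply that the proposed extremal example achieves $\sigma = 2k$ exactly (rather than something smaller), and that failure of matchability really does propagate back up to failure of AP via the cited observation --- both verifications are routine.
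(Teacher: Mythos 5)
Your proof is correct and follows essentially the same route as the paper: apply Lemma~\ref{lem_ore_traceable} and Observation~\ref{obs_pw_is_pi_is_pg} for the positive direction, and use $K_{k,k+2}$ (which has $\sigma = 2k$, order $2k+2$, and no perfect matching) for sharpness. You merely spell out the routine verifications ($\sigma = 2k$ via degree sums, failure of Hall's condition) that the paper leaves implicit.
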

	
	\begin{proof}
		The graph $G$ is RP since it is traceable (Observation \ref{obs_pw_is_pi_is_pg} and Lemma \ref{lem_ore_traceable}).
		
		To prove the bound is sharp, consider the complete bipartite graph $K_{k, k+2}$. 
		This graph has $\sigma = 2k$ and order $2k+2$.
		However $K_{k, k+2}$ does not have a perfect matching, and thus by Observation \ref{obs_pw_is_pi_is_pg}, it is not RP.
	\end{proof}

In~\cite{ore_marczyk}, Marczyk showed that the above result can be improved for AP graphs with the extra condition $\alpha(G) \leq \lceil \frac{n(G)}{2} \rceil$.
	
	\begin{thm}\textup{\cite{ore_marczyk}}
		Let $G$ be a connected graph of order $n$. 
		If $\alpha(G) \leq \lceil \frac{n}{2} \rceil$ and $\sigma(G) \geq n-3$, then $G$ is AP.
	\end{thm}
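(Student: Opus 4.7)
The plan is to deduce the AP property from a near-traceability argument, leveraging the fact that any traceable graph is AP by Observation \ref{obs_pw_is_pi_is_pg}. The degree condition $\sigma(G)\geq n-3$ falls only two short of Ore's traceability threshold of $n-1$ (Lemma \ref{lem_ore_traceable}), so the strategy is to first show that $G$ is almost always traceable under both hypotheses, and then dispose of any surviving exceptional structures directly.

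First I would handle the small orders: when $n\leq 5$ we have $\sigma(G)\geq n-3\geq 2\lceil(n-3)/2\rceil$, placing us in the regime of Proposition \ref{thm_pw_order_bound}, so $G$ is RP and hence AP. For larger $n$, I would apply the Bondy--Chv\'atal closure to $G$, iteratively adjoining edges $uv$ between non-adjacent pairs whose degree sum in the current graph reaches $n-1$; this operation preserves traceability, and any edge added sits inside the same vertex set, so traceability of the closure $G^*$ implies traceability of $G$. Because the initial degree sums are already at least $n-3$, many pairs of previously non-adjacent vertices will quickly become adjacent in the closure, so $G^*$ should either be complete (trivially AP) or have very restricted residual non-edges.

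The main obstacle is classifying which closures $G^*$ can fail to be traceable, and checking that the independence hypothesis $\alpha(G)\leq\lceil n/2\rceil$ either excludes them or forces AP in another way. The canonical obstruction to traceability when $\sigma=n-3$ is the unbalanced bipartite graph $K_{k,k+3}$, which has $\alpha=k+3>\lceil(2k+3)/2\rceil=k+2$ and is therefore excluded by the independence hypothesis; a similar mechanism should rule out analogous split-graph or generalised-bipartite obstructions, since each of them forces a large independent set on the ``thin'' side of a near-separator. A careful case analysis of which extremal configurations survive the combined degree and independence bounds will be the crux of the argument.

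For any stubborn exceptional configurations that do remain and are genuinely non-traceable, I would verify AP directly by exhibiting a spanning subgraph from one of the universal AP families (tripodes, balloons, or semistars) and invoking Lemma \ref{lem_spanning_subgraph}, since these families are broad enough to cover near-bipartite extremal structures with a central vertex or short spine. The delicate part here is matching each exceptional graph to a suitable AP spanning subgraph, which will require enumerating the extremal configurations near the Ore threshold and checking each case by hand.
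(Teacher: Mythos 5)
You are not competing with an internal argument here: the paper does not prove this statement at all, it simply quotes it from Marczyk \cite{ore_marczyk}, so your task amounted to reproving a full research result. Your outline reduces the problem to traceability via the Bondy--Chv\'atal closure and then promises to ``dispose of any surviving exceptional structures,'' but that deferred step is where the entire content of the theorem lives: for $\sigma(G)\geq n-1$ the conclusion is already immediate from Lemma~\ref{lem_ore_traceable} and Observation~\ref{obs_pw_is_pi_is_pg}, so the whole point of relaxing the bound to $n-3$ is to handle graphs that are genuinely not traceable. Leaving ``a careful case analysis of which extremal configurations survive'' and the matching of each exception to an AP spanning subgraph as open items means the proof has not been carried out; the closure step itself also buys little, since with $\sigma(G)=n-3$ the closure may add no edges at all.

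There are also concrete errors in the parts you did commit to. The claim that $K_{k,k+3}$ is the canonical obstruction and that the independence hypothesis excludes the analogous ones is false: the semistar $K_1(2,2,2)$ (order $7$) has $\sigma=4=n-3$ and $\alpha=3\leq\lceil 7/2\rceil$, is non-traceable, and is not even AP, because every connected set of $3$ or $4$ vertices must contain the cut vertex, so no connected $(3,4)$-partition exists; $K_1(1,2,2)$ with $n=6$ behaves the same way. So the independence condition does not kill the cut-vertex obstructions, your strategy would end up ``proving'' something false for such orders, and any correct treatment must respect the order restriction in Marczyk's original theorem (stated for sufficiently large $n$), which the transcription above suppresses. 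Your base case is also wrong: Proposition~\ref{thm_pw_order_bound} requires $\sigma(G)\geq 2k$ with $n\leq 2k+1$, i.e.\ effectively $\sigma(G)\geq n-1$, and $\sigma(G)\geq n-3$ does not supply this for $n\leq 5$ (the tripode $T(1,1,2)$ has $\sigma=2=n-3$). Finally, the fallback of finding a spanning tripode, balloon, or semistar is not routine: semistars are universal in the opposite direction (the graph $G$ is a spanning subgraph of the semistar, so Lemma~\ref{lem_spanning_subgraph} cannot be invoked that way), and tripodes and balloons are AP only for very restricted parameter lists, so exhibiting an AP spanning subgraph in each surviving configuration is exactly the hard structural analysis that Marczyk's paper performs and your sketch omits.
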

	
	For $G$ to have a (near) perfect matching, it is clearly necessary that $\alpha(G) \leq \lceil \frac{n(G)}{2} \rceil$. 
	For a large class of graphs, including complete multipartite graphs, this condition is also sufficient. We summarize these equivalences in Proposition~\ref{prop:multipartequivalences}.

Note that there is no possible forbidden subgraph characterisation of AP (RP) graphs. Given any graph $G$ of order $n$, the graph $K_n + G$ is Hamiltonian, and thus AP (RP).
	
	\begin{prop}\label{prop:multipartequivalences}
		Suppose $G$ is a graph of order $n$ such that $K_{a,b} \leq G \leq K_a + E_b$ for $a\leq b$ positive integers, or that $G$ is a complete multipartite graph.
		The following are equivalent:
		\begin{itemize}
			\item[(i)] $\alpha(G) \leq \lceil \frac{n}{2} \rceil$,
			\item[(ii)] $G$ has a (near) perfect matching,
			\item[(iii)] $G$ is traceable, 
			\item[(iv)] $G$ is AP,
			\item[(v)] $G$ is RP.
		\end{itemize}
		\label{prop_char_splitpartite}
	\end{prop}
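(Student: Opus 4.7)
The plan is to close the cycle of implications (iii) $\Rightarrow$ (v) $\Rightarrow$ (iv) $\Rightarrow$ (ii) $\Rightarrow$ (i) $\Rightarrow$ (iii). The first three implications are immediate from Observation~\ref{obs_pw_is_pi_is_pg}, and (ii) $\Rightarrow$ (i) is routine since an independent set can contain at most one endpoint from each edge of a (near) perfect matching. All the work is in (i) $\Rightarrow$ (iii), which I would handle by computing $\alpha(G)$ and $\sigma(G)$ in each of the two structural cases and invoking Lemma~\ref{lem_ore_traceable}.

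In the case $K_{a,b}\leq G\leq K_a+E_b$ with $a\leq b$, write the two sides as $A$ (with $|A|=a$) and $B$ (with $|B|=b$). Since all $A$-$B$ edges are forced by the $K_{a,b}$ subgraph, no independent set meets both sides; since the $B$-side carries no edges in $K_a+E_b$, it is independent in $G$ as well. Hence $\alpha(G)=\max(\alpha(G[A]),b)=b$ (using $a\leq b$). Condition (i) then forces $b\leq\lceil(a+b)/2\rceil$, equivalently $b\in\{a,a+1\}$. Next I would verify $\sigma(G)\geq 2a$: any non-adjacent pair lies on a single side; a pair on $A$ has degree sum at least $2b$, while a pair on $B$ has degree sum at least $2a$. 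Since $b\leq a+1$, we get $\sigma(G)\geq 2a\geq (a+b)-1=n-1$, so Lemma~\ref{lem_ore_traceable} gives that $G$ is traceable.

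In the complete multipartite case, let the parts have sizes $n_1\geq n_2\geq\cdots\geq n_k$. Then $\alpha(G)=n_1$, so (i) reads $n_1\leq\lceil n/2\rceil$, which is the same as $2n_1\leq n+1$. Any two non-adjacent vertices of $G$ share a common part of some size $n_i$ and each has degree $n-n_i$, so $\sigma(G)=2(n-n_1)$. The inequality $2n_1\leq n+1$ is exactly $\sigma(G)\geq n-1$, so Lemma~\ref{lem_ore_traceable} again delivers traceability. (The degenerate case of a complete $G$, where $\sigma(G)$ is vacuous, is trivially traceable and satisfies all five conditions.)

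The only place where care is required is the bookkeeping on the ceiling function in the parity split of $n$, which is where the $n-1$ in Ore's traceability bound matches up precisely with the $\lceil n/2\rceil$ in (i); otherwise the argument is a direct computation of $\alpha$ and $\sigma$ in each case, and no deep idea is needed beyond recognizing that an Ore-type bound handles both structural families uniformly.
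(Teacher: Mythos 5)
Your proposal is correct and follows essentially the same route as the paper: both close the implication cycle using Observation~\ref{obs_pw_is_pi_is_pg} and reduce everything to showing that (i) implies traceability via an Ore-type degree condition. The only difference is cosmetic: you verify $\sigma(G)\geq n-1$ case by case and invoke Lemma~\ref{lem_ore_traceable} directly, while the paper bounds $\delta(G)\geq\lfloor n/2\rfloor$ and applies Ore's theorem (Theorem~\ref{thm_ore_hamiltonian}) to the join $G+\{v\}$ -- the same argument in a slightly different packaging.
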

	
\begin{proof}
 It is easy to verify that \emph{(iii)} implies \emph{(ii)} and that \emph{(ii)} implies \emph{(i)}. We now argue that \emph{(i)} implies \emph{(iii)}.
  If $G$ is complete multipartite or $K_{a,b} \leq G \leq K_a + E_b$, and $\alpha(G) \leq \lceil \frac{n}{2} \rceil$, then the minimum degree satisfies $\delta(G) \geq \lfloor \frac{n}{2} \rfloor$. 
 Consider the join $G+\{v\}$ and note that $\delta(G+\{v\}) \geq \frac{n+1}{2}$.
 By Ore's Theorem (Theorem \ref{thm_ore_hamiltonian}), $G+\{v\}$ has a spanning cycle $C$, so $C-v$ is a spanning path of $G$.
 Observation~\ref{obs_pw_is_pi_is_pg} completes the proof.
\end{proof}

In~\cite{ravdgraphs_baudon}, Baudon, Gilbert and Woźniak characterised RP trees.
In~\cite{structavd_baudon}, Baudon, Bensmail, Foucaud and Pilsniak described some properties of RP balloons.

\begin{thm}\textup{\cite{ravdgraphs_baudon}}
	A tree is RP if and only if it is either a path, the tripode $T(2,4,6)$, or a tripode $T(a,b,c)$, where $(a,b,c)$ is one of the triples in Table~\ref{tab_Tabc_RP}:
	\begin{table}[h]
		\centering
		\begin{tabular}{|l|l|l|l|l|}
			\hline
			$(1,1,c)$ & $c\equiv 0 \pmod 2$                        &  & $(1,4,c)$ & $c \in \{5, 6, 8, 10, 13, 18\}$ \\ \hline
			$(1,2,c)$ & $c\equiv 0 \pmod 3$ or $c\equiv 1 \pmod 3$ &  & $(1,5,6)$ &                                 \\ \hline
			$(1,3,c)$ & $c\equiv 0 \pmod 2$                        &  & $(1,6,c)$ & $c\in \{7, 8, 10, 12, 14\}$     \\ \hline
		\end{tabular}
	\caption{Table of triples $(1,b,c)$, $1 \leq b\leq c$, for which the graph $T(a,b,c)$ is RP.}
	\label{tab_Tabc_RP}
	\end{table}
	\label{thm:pw_tree_characterisation}
\end{thm}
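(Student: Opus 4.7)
The plan is to apply Proposition~\ref{prop_simple_RP_char}, which reduces RP-ness of a tree $T$ of order $n$ to the existence, for every partition $n = a + b$, of a bipartition of $V(T)$ into two RP subtrees. Since any connected bipartition of a tree corresponds to deleting a single edge, the task becomes: for every target size $a \in \{1,\dots,\lfloor n/2\rfloor\}$, find an edge of $T$ whose removal produces subtrees of sizes $a$ and $n-a$ that are each either a path or a tripode already known to be RP.

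For the ``if'' direction, paths are immediate, as deleting any edge yields two subpaths. For $T(2,4,6)$ and each entry of Table~\ref{tab_Tabc_RP} we proceed by finite case analysis on $a$, bootstrapping off the fact that paths and the smaller listed tripodes are RP; because every subtree of a tripode obtained by edge deletion is itself either a path or a tripode with shorter arms, the verification is recursive and finite.

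For the ``only if'' direction, I would exclude all other trees in three stages. \emph{Stage 1}: if $T$ has a vertex $v$ of degree at least $4$, then considering the partition $(2,n-2)$, any size-$2$ connected subtree is an edge; whether or not that edge is incident with $v$, the complement retains at least three subtrees meeting only at $v$ in the original tree but now disconnected, so no valid bipartition exists. \emph{Stage 2}: if $T$ has at least two vertices $u,v$ of degree at least $3$, any edge deletion leaves $u$ or $v$ in a subtree that still has two branch vertices, and by induction on $n$ (or by exhibiting a direct bad partition size for each configuration) one shows that no bipartition into two RP subtrees exists. \emph{Stage 3}, the arithmetic heart, is restricted to tripodes $T(a,b,c)$ with $a \leq b \leq c$: here every edge deletion yields either (i) a path plus a tripode with one arm shortened, or (ii) a path (the fully severed arm) plus a smaller tripode or path. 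One then tabulates, for each target size $s$, the possible $(\text{path},\text{tripode})$ or $(\text{path},\text{path})$ decompositions, and checks whether the congruence conditions in Table~\ref{tab_Tabc_RP} are exactly those needed to guarantee at least one valid cut for every $s$.

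The main obstacle is Stage~3: one must show that for every tripode not on the list, some target size $s$ admits no edge-cut leaving two RP parts. For tripodes with $\min(a,b,c)\geq 2$, a careful but finite analysis of the recursion forced by Proposition~\ref{prop_simple_RP_char} shows that only $(2,4,6)$ survives. For tripodes $T(1,b,c)$, fixing $b$ and solving which values of $c$ admit a valid split at every size yields precisely the listed modular and sporadic conditions; because the RP tripodes with small arms are bounded in size, obstructions for large $c$ are eventually forced by the recursive structure, reducing the whole argument to a finite verification.
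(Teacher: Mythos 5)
First, note that the paper does not prove Theorem~\ref{thm:pw_tree_characterisation} at all: it is quoted from \cite{ravdgraphs_baudon}, so there is no in-paper argument to compare yours against; your proposal has to stand on its own, and as written it has genuine gaps.

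The most concrete problem is Stage~1. The $(2,n-2)$ partition does not rule out trees with a vertex $v$ of degree at least $4$: the size-$2$ part need not be near $v$, and even when it is, the complement can be connected. For example, in the spider with centre $v$ and legs of lengths $1,1,2,2k$, taking the leg of length $2$ as the size-$2$ part leaves $T(1,1,2k)$, which is connected and in fact RP, so this partition size provides no obstruction whatsoever; one must locate a different size $a$ and show that \emph{no} connected bipartition of that size works, and then show this for every tree of maximum degree $\geq 4$, which your argument does not do. Stage~2 has a similar defect: deleting the edge on the $u$--$v$ path (or an edge incident to $u$ or $v$) can leave two subtrees each with at most one branch vertex, so the claimed invariant ``one part still has two branch vertices'' is false as stated, and the induction is not actually set up. Finally, Stage~3 --- which you correctly identify as the heart of the theorem --- is a plan rather than a proof. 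The ``if'' direction is not a finite check: the table contains infinite families (all even $c$ in $(1,1,c)$ and $(1,3,c)$, two residues mod $3$ in $(1,2,c)$), so one needs an induction on $c$ within each family, which you gesture at but do not formulate. The ``only if'' direction for tripodes requires showing, e.g., that $T(1,4,c)$ fails for every $c\notin\{5,6,8,10,13,18\}$ and $T(1,6,c)$ for every $c\notin\{7,8,10,12,14\}$, including all sufficiently large $c$; the assertion that ``obstructions for large $c$ are eventually forced by the recursive structure'' is exactly the statement that needs proof, and nothing in the proposal supplies it. The reduction via Proposition~\ref{prop_simple_RP_char} to single-edge cuts is the right starting point (and is how \cite{ravdgraphs_baudon} proceeds), but the case analysis that constitutes the theorem is missing.
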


\begin{thm}\textup{\cite{structavd_baudon}}
	Let $B$ be the balloon graph $B(b_1,\dots, b_k)$ with $b_1 \leq \dots \leq b_k$.
	If $B$ is RP, then $k\leq 5$.
	Further, if $B$ is RP and $k\geq 4$, then $b_1 \leq 7$ and $b_2 \leq 39$, but $b_k$ can be arbitrarily large.
	\label{thm_rp_balloon_toughness}
\end{thm}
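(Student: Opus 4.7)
The plan is to split the theorem into its three assertions and attack each by leveraging the recursive partitioning condition (Proposition \ref{prop_simple_RP_char}) together with the RP-tree classification (Theorem \ref{thm:pw_tree_characterisation}). Throughout, the key structural remark is that $\{u,v\}$ is a vertex cut of $B$ of size $2$, so removing it leaves exactly $k$ components $P_1, \ldots, P_k$. In any $(a,n-a)$-partition $\{A,B\}$ of $V(B)$ with both parts connected (a requirement imposed by the RP condition), one of two things must happen: either both $u$ and $v$ lie in the same part, in which case the other part is forced to be a contiguous subpath of a single $P_i$, or $u$ and $v$ are separated, in which case every $P_i$ is either entirely in one part or split into a $u$-prefix and a $v$-suffix.

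For the bound $k \leq 5$, I would look for a part size $a$ that rules out the first case (force $u,v$ to be separated) and then count degrees of freedom. For instance, taking $a = \lfloor n/2 \rfloor$ with all $b_i$ large enough guarantees that neither part can be a subpath of a single $P_i$, so $u$ and $v$ must be separated; then the sizes of the two parts decompose as $1 + \sum_{i} p_i = a$ with $0 \leq p_i \leq b_i$. I would then select partition sizes that are \emph{not} representable in this way for $k \geq 6$, exploiting both parity and the fact that the RP condition must work for \emph{every} pair $(a, n-a)$. A cleaner route, which I would try first, is to partition $B$ into several pairs; every $2$-part must be an edge, so every vertex of $B$ must be saturated by an edge whose other endpoint lies in a carefully controlled location, and a pigeonhole on $u,v$ versus path endpoints yields the bound.

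For the bounds $b_1 \leq 7$ and $b_2 \leq 39$ when $k \geq 4$, my strategy is to isolate a small RP piece that inherits the tree structure. Specifically, I would remove a connected part of size $n - (1 + b_1 + b_2)$ containing $v$ together with the ends of $P_3, \ldots, P_k$, so that the complementary RP part is precisely the tripode $T(b_1, b_2, \ell)$ for some $\ell$ (or a close relative). Theorem \ref{thm:pw_tree_characterisation} then forces $b_1$ and $b_2$ into the finite list of Table \ref{tab_Tabc_RP}, giving $b_1 \leq 7$ and, once $b_1$ is fixed, a finite list of admissible $b_2$ with maximum $39$ (coming from the $(1,4,18)$ exception when the tripode appears after rerouting). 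The main obstacle is engineering a partition whose smaller RP part is guaranteed to be a tripode rather than some more complicated subgraph; this requires balancing part sizes against the $b_i$ and may need separate arguments according to which of $b_1, b_2$ is the bottleneck.

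Finally, to show $b_k$ can be arbitrarily large, I would exhibit an explicit infinite family, say $B(1,1,1,1,b)$ or $B(b_1, b_2, b_3, b_4, b)$ with $b_1, \ldots, b_4$ chosen minimally from the admissible list above, and verify RP-ness by induction on $b$: given any partition $(a, n-a)$, produce an $(a, n-a)$-partition into two parts, each either a smaller balloon of the same form or a path/tripode already known to be RP by Theorem \ref{thm:pw_tree_characterisation}, closing the recursion via Proposition \ref{prop_simple_RP_char}. The hardest step overall will be the delicate finite case analysis pinning down the constant $39$, since the RP-tree table itself has several exceptional entries and the interaction between removing long prefixes from $P_k$ and the structure of $T(a,b,c)$ must be tracked exactly.
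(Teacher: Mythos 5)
You should first be aware that the paper does not prove Theorem~\ref{thm_rp_balloon_toughness} at all: it is imported verbatim from \cite{structavd_baudon}, so there is no in-paper argument to compare against and your outline has to stand on its own. As it stands, it has a quantifier problem that undermines the $b_1\leq 7$ and $b_2\leq 39$ claims. Being RP only guarantees that, for each size $\lambda$ you name, the graph admits \emph{some} $(\lambda,n-\lambda)$-partition into RP parts; you do not get to ``remove a connected part \dots\ so that the complementary RP part is precisely the tripode $T(b_1,b_2,\ell)$''. To extract necessary conditions you must fix $\lambda$ and then analyse \emph{every} way an RP part of that size can sit inside the balloon (a subpath of a single branch, a spider centred at $u$ or at $v$ with legs of assorted lengths, a part containing both $u$ and $v$, \dots) and show that each possibility either cannot occur or forces the desired inequality. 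You name this as the main obstacle but do not resolve it, and the constant $39$ is never derived; it does not simply drop out of the $(1,4,18)$ entry of Table~\ref{tab_Tabc_RP} without that case analysis.

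The other two assertions also have concrete holes. For $k\leq 5$, counting representable sizes cannot work: when $u$ and $v$ are separated, the part containing $u$ has size $1+\sum_i p_i$ with $0\leq p_i\leq b_i$, so every size in $[1,n-1]$ is representable and nothing is excluded on numerical grounds. The real obstruction is that this part is a spider (a subdivided star), hence a tree, so by Theorem~\ref{thm:pw_tree_characterisation} it must be a path or one of the tripodes of Table~\ref{tab_Tabc_RP} and can therefore meet at most three branches; that is the lever you need, whereas your matching/pigeonhole fallback controls only parity-type data (at most $|S|+1$ odd components, as in Corollary~\ref{cor_modcut_lemma}), not the number of branches. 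Restricting to balloons with ``all $b_i$ large enough'' also does not prove the statement, which must exclude every balloon with $k\geq 6$, including those with small branches. Finally, your proposed witness family for unbounded $b_k$ fails outright: in $B(1,1,1,1,b)$ the four length-one branches are single vertices adjacent only to $u$ and $v$, so any matching saturates at most two of them, leaving at least two unmatched vertices; hence the graph has no near-perfect matching and is not RP by Observation~\ref{obs_pw_is_pi_is_pg}. A correct family with a large branch is, for instance, $B(1,1,2,3,2j)$, which \cite{ravdgraphs_baudon} shows is RP, but verifying such a family is exactly the inductive work your sketch defers.
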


Tripodes and balloons are ``universal'' for RP graphs with connectivity 1 and 2, respectively, as the following observation from \cite{avdconnectivity_baudon} shows.

\begin{obs}\textup{\cite{avdconnectivity_baudon}}
	Let $S$ be a vertex cut of a connected graph $G$, let $C_1,\dots, C_k$ denote the components of $G-S$, and let $c_i$ denote $n(C_i)$.
	\begin{itemize}
		\item If $|S|=1$ and $G$ is RP (AP), then the tripode $T(c_1, \dots, c_k)$ is RP (AP),
		\item If $|S|=2$ and $G$ is RP (AP), then the balloon $B(c_1, \dots, c_k)$ is RP (AP).
	\end{itemize}
	\label{obs_tripode_balloon_universal}
\end{obs}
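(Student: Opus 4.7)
My plan is to prove both bullets of the observation by induction on $n = n(G)$, handling the RP and AP versions in parallel. The base case $n \leq 2$ is vacuous since such a graph has no nontrivial cut. The key ingredient is a ``flattening'' map from connected partitions of $G$ to connected partitions of the universal tripode $T(c_1,\dots,c_k)$ or balloon $B(c_1,\dots,c_k)$ that preserves part sizes.

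For the AP half, the argument is direct. Given an integer partition $a_1,\dots,a_r$ of $n$, apply AP to $G$ to obtain a connected partition $\{A_1,\dots,A_r\}$, and fix a part $A_j$ containing $S$. Each other $A_l$ is connected and disjoint from $S$, so it lies in a single component $C_{i(l)}$ of $G - S$. I would construct a partition of the universal tripode (resp.\ balloon) by laying out the segments $A_j \cap C_i$ and $\{A_l : A_l \subseteq C_i\}$ as consecutive subpaths along each arc $P_i$, with $A_j \cap C_i$ placed adjacent to the central (resp.\ cut-structure) vertex. Each $A_l$ with $l \neq j$ then becomes a connected subpath of $P_{i(l)}$, and $A_j$ becomes a connected sub-tripode (resp.\ sub-balloon) containing the center(s). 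This gives a connected partition of the universal graph of the required sizes.

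For the RP half I would use Proposition~\ref{prop_simple_RP_char} to reduce to 2-partitions. Given a 2-partition $(a,b)$ of $n$, apply RP to $G$ to get $\{A,B\}$ with $G[A], G[B]$ RP and (WLOG) $S \subseteq A$, $B \subseteq C_i$ for some $i$. Flatten by placing $B'$ as the $b$-vertex subpath at the far end of $P_i$; then $T[B']$ is a path and hence RP, while $T[A']$ is the smaller sub-tripode $T(c_1,\dots,c_i - b,\dots,c_k)$. If $C_i \setminus B$ is connected then the inductive hypothesis applied to $G[A]$ directly yields that $T[A']$ is RP. The hard part will be the case where $C_i \setminus B$ has $m \geq 2$ components $D_1,\dots,D_m$: the inductive hypothesis then produces a tripode with $k-1+m$ branches, not $T[A']$'s $k$ branches. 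I would resolve this by invoking Theorem~\ref{thm:pw_tree_characterisation}: any RP tree has branching degree at most $3$, so $k-1+m \leq 3$; combined with $m \geq 2$ and $k \geq 2$ this leaves only $k = 2, m = 2$, in which case $T[A']$ has at most two branches, is therefore a path, and is RP.

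The $|S| = 2$ balloon case is analogous in structure, using the same flattening but with subpaths of balloon arcs. An additional subtlety is that the position of $B$ within $C_i$ (near $u_1$, near $u_2$, or internal) must be mirrored in the placement of $B'$ within $P_i$, so the partition of the balloon remains connected and induces a sub-balloon structure on $A'$. The branch-merging step in the RP induction is handled in the same spirit, using Theorem~\ref{thm_rp_balloon_toughness} (which bounds the number of branches of an RP balloon by $5$) to collapse problematic cases. Throughout, the main obstacle is this branch-merging case; it is resolved by the tight characterisations of RP trees and balloons, which force all problematic tripodes/balloons arising from the inductive hypothesis to degenerate into paths or known RP structures.
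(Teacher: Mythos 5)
The paper does not prove this observation at all---it is quoted from \cite{avdconnectivity_baudon}---so your proposal has to stand on its own merits. The tripode half of it essentially does. The AP flattening argument for a cut vertex is the standard one and is correct, and your RP induction via Proposition~\ref{prop_simple_RP_char} is a sound route: when $C_i\setminus B$ breaks into $m\geq 2$ pieces, the inductive hypothesis applied to $G[A]$ gives an RP spider with $k-1+m$ legs, Theorem~\ref{thm:pw_tree_characterisation} forces $k-1+m\leq 3$, hence $k=2$, $m=2$, and the graph you actually need, $T(c_1,\dots,c_i-b,\dots,c_k)$, degenerates to a path. This works precisely because a spider with a leg shortened, deleted, or obtained by merging legs is again a spider (or a path), so every graph appearing in the induction stays inside the family governed by Theorem~\ref{thm:pw_tree_characterisation}. (You are also right not to imitate the one-line proof of Observation~\ref{obs_K_universal}: $G$ is a spanning subgraph of the semistar, but certainly not of the tripode or balloon, so Lemma~\ref{lem_spanning_subgraph} is of no use here.)

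The balloon half, however, has a genuine gap, and it sits exactly where you wave your hands (``handled in the same spirit''). First, in the AP argument, if a single part $A^*$ of the realization in $G$ contains both vertices of $S$, its mirrored image in $B(c_1,\dots,c_k)$ is $\{u',v'\}$ together with initial and terminal segments of the paths $P_i$; since $u'$ and $v'$ are non-adjacent in the balloon, this set is connected only if it contains some $P_i$ entirely, i.e.\ only if $A^*\supseteq C_i$ for some $i$, which need not happen (for instance when $u$ and $v$ are adjacent in $G$ and $A^*$ meets every component properly). So ``the partition of the balloon remains connected'' is simply not true for the mirrored placement; the balloon realization must redistribute sizes among the paths in a non-local way, and justifying that this is always possible is the real content of the cited lemma. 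Second, in the RP induction, when $S\subseteq A$ and $B$ is a \emph{proper} connected subset of some $C_i$, the complement of your mirrored $B'$ is not a balloon: it is the balloon on the remaining paths with one (or, for an interior placement, two) pendant paths hanging from the hubs. Consequently neither your inductive hypothesis (a statement about balloons) nor Theorem~\ref{thm_rp_balloon_toughness} says anything about it, and the collapse trick that saved the tripode case has no analogue, because this leftover graph leaves the family entirely. To repair the argument you would need an additional lemma---for example, that RP-ness of $B(c_1,\dots,\hat{c_i},\dots,c_k,d_1,\dots,d_m)$ implies RP-ness of the graph obtained from $B(c_1,\dots,\hat{c_i},\dots,c_k)$ by attaching a pendant path of length $d_1+\dots+d_m$ at a hub---which you neither state nor prove, and which is not obvious.
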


To discuss AP and RP graphs of arbitrary connectivity, we find it easiest to work with the semistars $K_{b_0}(b_1,\dots, b_k)$, as they are also ``universal''.

\begin{obs}
	Let $S$ be an $s$-vertex cut of a graph $G$, let $C_1,\dots, C_k$ denote the components of $G-S$, and let $c_i$ denote $n(C_i)$.
	If $G$ is RP (AP), then the semistar $K_s(c_1,\dots, c_k)$ is RP (AP).
	\label{obs_K_universal}
\end{obs}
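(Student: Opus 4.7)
The plan is to exhibit $G$ as a spanning subgraph of the semistar $K_s(c_1,\dots,c_k)$ and then invoke Lemma~\ref{lem_spanning_subgraph}.

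First I would fix a bijection identifying the vertex set of $K_s(c_1,\dots,c_k)$ with $V(G) = S \cup C_1 \cup \cdots \cup C_k$, matching the central clique of the semistar with $S$ and the $i$-th outer clique with $V(C_i)$. Note the orders agree: both graphs have $s + \sum_i c_i$ vertices.

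Next I would verify edge containment, considering each edge $uv$ of $G$ according to which parts its endpoints lie in. If $u,v \in S$, the edge belongs to the clique on $S$ in the semistar. If $u,v \in V(C_i)$ for some $i$, the edge lies in the clique $K_{c_i}$ of the semistar. If $u \in S$ and $v \in V(C_i)$, the edge is present in the semistar by construction, since every $S$-to-outside edge exists there. The remaining case, $u \in V(C_i)$ and $v \in V(C_j)$ with $i \neq j$, cannot occur because $S$ is a vertex cut separating the $C_i$'s in $G$. Hence $G$ is a spanning subgraph of $K_s(c_1,\dots,c_k)$.

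Finally, since $G$ is RP (resp.\ AP), Lemma~\ref{lem_spanning_subgraph} applied to $K_s(c_1,\dots,c_k)$ (with spanning subgraph $G$) immediately gives that the semistar is RP (resp.\ AP). There is no serious obstacle here: the only content is the edge-case check, and this is straightforward once the identification of vertex sets is made. The result is essentially a packaging of the definition of the semistar together with the spanning-subgraph lemma.
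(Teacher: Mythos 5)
Your proposal is correct and matches the paper's proof exactly: the paper likewise observes that $G$ is a spanning subgraph of $K_s(c_1,\dots,c_k)$ and applies Lemma~\ref{lem_spanning_subgraph}. Your edge-by-edge verification simply spells out the details the paper leaves implicit.
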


\begin{proof}
	Notice that $G$ is a spanning subgraph of $K_s(c_1,\dots, c_k)$ and apply Lemma \ref{lem_spanning_subgraph}.
\end{proof}

Per Observations \ref{obs_tripode_balloon_universal} and \ref{obs_K_universal}, the triples $(a,b,c)$ in Table \ref{tab_Tabc_RP} for which $T(a,b,c)$ is RP are also the triples for which $K_1(a,b,c)$ is RP.

\section{New RP graphs from old}\label{sec:build}

In this section, we present two operations for combining RP graphs to obtain new RP graphs: the well-known sequential join, and a ``subgraph replacement" operation. 
These constructions, in tandem with Lemma \ref{lem_spanning_subgraph}, allow us to easily prove that many graphs encountered in the rest of the paper are RP.
Of particular interest is the use of replacement graphs in Section~\ref{sec:lowbounds} to construct RP graphs with large vertex cuts that leave many components.

There is a generalisation of the fact that paths are RP. 
In particular, the sequential join of RP graphs is RP.

\begin{prop}
	Let $H_1, \dots, H_k$ be RP graphs, and let $G = H_1 + \dots + H_k$ be the sequential join of the graphs $H_i$. 
	Then $G$ is RP.
	\label{prop_sequential_join}
\end{prop}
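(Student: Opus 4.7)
The plan is to apply the characterisation in Proposition \ref{prop_simple_RP_char}, reducing the task to finding, for each integer partition $a,b$ of $n := n(G)$, an $(a,b)$-partition $\{A,B\}$ of $V(G)$ such that both $G[A]$ and $G[B]$ are RP. I would proceed by strong induction on $n$; the base case $n=1$ is immediate, since then $k=1$ and $G = H_1 \simeq K_1$.

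For the inductive step, I would introduce the partial sums $N_j := n(H_1) + \cdots + n(H_j)$ and let $i$ be the unique index satisfying $N_{i-1} < a \leq N_i$. If $a = N_i$, the split falls exactly on a block boundary, and one simply takes $A = V(H_1) \cup \cdots \cup V(H_i)$ and $B = V(H_{i+1}) \cup \cdots \cup V(H_k)$ (noting $i<k$ since otherwise $b=0$). Otherwise, setting $a' := a - N_{i-1}$ gives $1 \leq a' < n(H_i)$, so the RP property of $H_i$ — invoked via Proposition \ref{prop_simple_RP_char} — yields an $\bigl(a',\, n(H_i)-a'\bigr)$-partition $\{A_i, B_i\}$ of $V(H_i)$ with $H_i[A_i]$ and $H_i[B_i]$ both RP, and one takes $A = V(H_1) \cup \cdots \cup V(H_{i-1}) \cup A_i$ and $B = B_i \cup V(H_{i+1}) \cup \cdots \cup V(H_k)$.

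The one bookkeeping point that needs care — and which I expect to be the only real obstacle — is verifying that in every case $G[A]$ and $G[B]$ are themselves sequential joins of RP graphs; for instance, $G[A] = H_1 + \cdots + H_{i-1} + H_i[A_i]$. This holds because deleting vertices inside individual blocks preserves the sequential join structure: every edge between two consecutive blocks whose endpoints both survive is retained in the induced subgraph, and no spurious edges between non-consecutive blocks are introduced. Since $|A|, |B| \geq 1$ and $|A| + |B| = n$, both have strictly fewer than $n$ vertices, so the inductive hypothesis applies to each, and Proposition \ref{prop_simple_RP_char} then concludes that $G$ is RP.
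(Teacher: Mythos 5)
Your proof is correct and follows essentially the same route as the paper's: induction on $n(G)$, locating the block $H_i$ in which the split of size $a$ falls, using the RP property of that block to split it, and observing that both resulting parts are again sequential joins of RP graphs so the inductive hypothesis applies. Your separate handling of the boundary case $a = N_i$ is a minor (and slightly more careful) refinement of the paper's argument, not a different approach.
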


\begin{proof}
	Let $n_i$ be the order of $H_i$, and $n = n_1 + \dots + n_k$ the order of $G$. We proceed by induction on $n$.
	
	The base case $n=1$ is trivial, as then $G \simeq K_1$, which is RP.
	
	Let $n\geq 2$, assume the proposition is true for all positive integers less than $n$, and let $G$ be an $n$-vertex sequential join of RP graphs $H_1, \dots, H_k$. 
	It suffices to show that for any $a\in [1, n-1]$, there exists a partition of $G$ into two RP graphs $G[A]$ and $G[B]$ such that $G[A]$ has order $a$. 
	To do this, we will pick the subgraph induced by the `leftmost' $a$ vertices of $G$ in a manner that breaks apart at most one of the graphs $H_i$.
	
	Let $m_0 = 0$, and for all $i\in [1,k]$, let $m_i = \sum_{j=1}^{i}n_j$. 
	Denote by $s$ the largest non-negative integer such that $a\geq m_s$.
	Since the graph $H_{s+1}$ is RP, it can be partitioned into two RP parts $H_{s+1}[X]$ and $H_{s+1}[Y]$, such that $|X|= a - m_s$.
	We can thus pick $A = V(H_1)\cup \dots \cup V(H_s)\cup X$ and $B = Y \cup V(H_{s+2})\cup \dots \cup V(H_k)$. 
	Note that $G[A] = H_1 + \dots + H_s + H_{s+1}[X]$ and $G[B] = H_{s+1}[Y] + H_{s+2} + \dots + H_k$. 
	By the induction hypothesis, both $G[A]$ and $G[B]$ are RP, completing the proof.
\end{proof}

A consequence of Proposition \ref{prop_sequential_join} is that the suspension $K_1 + G$ of an RP graph $G$ is RP.

\begin{corollary}
	Suppose $t$ is a positive integer. If $K_{a_0}(a_1, \dots, a_k)$ and $K_{b_0}(b_1, \dots, b_j)$ are RP, then so is the graph
	\[
		K_{a_0+b_0+t}(a_1, \dots, a_k, b_1, \dots, b_j).
	\]
	\label{cor_ka_plus_kb_plus_1}
\end{corollary}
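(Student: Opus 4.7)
The plan is to exhibit a sequential join of three RP graphs that is a spanning subgraph of $G := K_{a_0+b_0+t}(a_1,\ldots,a_k,b_1,\ldots,b_j)$, and then apply Lemma~\ref{lem_spanning_subgraph}. The candidate is
\[
  H := K_{a_0}(a_1,\ldots,a_k) + K_t + K_{b_0}(b_1,\ldots,b_j),
\]
where a clique $K_t$ is inserted between the two hypothesised RP semistars.

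First I would check that $H$ is itself RP. The two outer summands are RP by hypothesis, and $K_t$ is traceable for any $t\geq 1$, hence RP by Observation~\ref{obs_pw_is_pi_is_pg}. Proposition~\ref{prop_sequential_join} then gives that $H$ is RP.

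Next I would verify that $H$ is a spanning subgraph of $G$. Partition the hub of $G$ as $A_0 \sqcup T \sqcup B_0$ of sizes $a_0, t, b_0$, and label the non-hub cliques as $A_1,\ldots,A_k,B_1,\ldots,B_j$ of the corresponding sizes. Under the natural identification, $A_0$ serves as the hub of the first semistar summand of $H$, $B_0$ as the hub of the third, and $T$ as the middle $K_t$. Every edge of $H$ is then an edge of $G$: the clique edges inside $A_0, T, B_0, A_1,\ldots, B_j$ all appear in $G$; the semistar edges $A_0$--$A_i$ and $B_0$--$B_i$ appear because $A_0$ and $B_0$ lie in the hub of $G$; and the complete bipartite edges from $T$ to everything in $V(H_1)\cup V(H_3)$ appear because $T$ also lies in the hub of $G$. (The edges $A_0$--$B_0$, $A_0$--$B_i$, and $B_0$--$A_i$ are present in $G$ but absent from $H$, which is harmless for a spanning-subgraph argument.) Lemma~\ref{lem_spanning_subgraph} then concludes.

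I do not foresee a substantive obstacle. The only creative step is guessing the middle clique $K_t$ so that $A_0 \cup T \cup B_0$ is exactly the hub of $G$; once that guess is made, both the RP-ness of $H$ and the spanning containment reduce to routine edge checks.
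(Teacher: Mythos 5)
Your proposal is correct and follows the paper's proof exactly: the paper also observes that $K_{a_0}(a_1,\dots,a_k) + K_t + K_{b_0}(b_1,\dots,b_j)$ is a spanning subgraph of $K_{a_0+b_0+t}(a_1,\dots,a_k,b_1,\dots,b_j)$ and is RP by Proposition~\ref{prop_sequential_join}, then concludes by Lemma~\ref{lem_spanning_subgraph}. Your verification of the spanning containment is simply more detailed than the paper's one-line statement.
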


\begin{proof}
	The graph $K_{a_0+b_0+t}(a_1, \dots, a_k, b_1, \dots, b_j)$ has a spanning subgraph isomorphic to
	\[
		K_{a_0}(a_1, \dots, a_k) + K_t + K_{b_0}(b_1, \dots, b_j),
	\]
	which is RP by Proposition \ref{prop_sequential_join}.
\end{proof}

Let $\{G_i\}_{i=1}^k$, $k\geq 2$ be a set of graphs, $J = G_1 + \dots + G_k$, and $m = \min\{\tau(G_i) : 1\leq i \leq k\}$ be the minimum toughness among the graphs $G_i$.
If $m < \frac{1}{2}$, then $\tau(J) > m$. 
Thus, there are limitations to how low the toughness of a sequential join of RP graphs can be. 
However, replacement graphs provide RP graphs with high connectivity and low toughness (see Corollary \ref{cor_2kplus1_family}).

\begin{theorem}(RP Replacement Theorem)
	Every replacement graph is RP. 
	That is, suppose $K_{b_0}(b_1, \dots, b_k)$ is RP, and $\{G_i\}_{i=0}^k$ is a set of RP graphs such that $n(G_i) = b_i$. 
	Then the graph $H$ is RP, where
	\[
		H = G_0 + \left(\bigcup_{i=1}^k G_i \right).
	\]
	\label{thm_replacement}
\end{theorem}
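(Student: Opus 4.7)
The plan is to prove the theorem by induction on $n = n(H) = \sum_{i=0}^k b_i$. The base case $n = 1$ is immediate, since $H \simeq K_1$ is RP. For the inductive step, I invoke Proposition \ref{prop_simple_RP_char}: it suffices to check that $H$ is connected, and that for every $a \in [1, n-1]$ there is an $(a, n-a)$-partition $\{A, B\}$ of $V(H)$ with both $H[A]$ and $H[B]$ RP. Connectedness follows from the hypothesis that $K_{b_0}(b_1, \dots, b_k)$ is RP, and so connected: either $b_0 \geq 1$, in which case $G_0$ is a nonempty RP (hence connected) graph joined to every vertex of each RP (hence connected) $G_i$; or else the semistar already reduces to a single clique, and $H$ reduces to a single $G_i$ which is RP by hypothesis.

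For the heart of the argument, fix $a \in [1, n-1]$ and apply Proposition \ref{prop_simple_RP_char} to the RP semistar $K_{b_0}(b_1, \dots, b_k)$ with integer partition $(a, n-a)$. This yields a partition $\{A', B'\}$ of $V(K_{b_0}(b_1, \dots, b_k))$ with $|A'| = a$ and both induced subgraphs RP. Writing $a_i := |A' \cap V(K_{b_i})|$ and $c_i := b_i - a_i$, these induced subgraphs are isomorphic to the (possibly smaller) semistars $K_{a_0}(a_1, \dots, a_k)$ and $K_{c_0}(c_1, \dots, c_k)$, both of which are therefore RP. Next, because each $G_i$ is RP with $n(G_i) = a_i + c_i$, Proposition \ref{prop_simple_RP_char} produces a partition $\{A_i, B_i\}$ of $V(G_i)$ with $|A_i| = a_i$ and $|B_i| = c_i$ such that $G_i[A_i]$ and $G_i[B_i]$ are both RP. Setting $A = \bigcup_{i=0}^{k} A_i$ and $B = \bigcup_{i=0}^{k} B_i$, the definition of the sequential join gives
\[
H[A] \;=\; G_0[A_0] + \bigcup_{i=1}^{k} G_i[A_i],
\]
which is precisely a replacement graph for the RP semistar $K_{a_0}(a_1, \dots, a_k)$ with respect to the RP graphs $\{G_i[A_i]\}_{i=0}^{k}$. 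Since $|A| = a < n$, the induction hypothesis applies and $H[A]$ is RP; symmetrically, $H[B]$ is RP.

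The main obstacle is the bookkeeping verification that an ``aligned'' induced subgraph of a replacement graph is again a replacement graph, on strictly fewer vertices, with respect to induced subgraphs of the building blocks $G_i$. This is what allows the induction hypothesis to bite, and it relies crucially on the fact that the partition of $K_{b_0}(b_1, \dots, b_k)$ supplied by Proposition \ref{prop_simple_RP_char} splits each clique $K_{b_i}$ into two sub-cliques, producing genuine smaller semistars. Two minor wrinkles warrant attention in a full write-up: when some $a_i = 0$ the corresponding $G_i[A_i]$ is empty and is simply omitted from the replacement data (consistent with the convention $K_{b_0}(b_1, \dots, b_k, 0) \simeq K_{b_0}(b_1, \dots, b_k)$), and when $b_0 = 0$ the connectedness of the semistar forces all but one of the $b_i$ to vanish, collapsing the claim to the tautology that $G_{i^*}$ is RP. Beyond these edge cases, the recursion closes cleanly.
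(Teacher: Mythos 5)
Your proof is correct and takes essentially the same approach as the paper's: induction on the order of the replacement graph, pulling back an RP partition of the semistar $K_{b_0}(b_1,\dots,b_k)$ to compatible partitions of the blocks $G_i$, and observing that both resulting parts are again replacement graphs of strictly smaller order. Your explicit handling of connectedness and the degenerate cases ($a_i=0$, $b_0=0$) only spells out details the paper leaves implicit.
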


\begin{proof}
	We use induction on the order of the replacement graph.
	Clearly every replacement graph of order at most 3 is RP.
	Suppose that every replacement graph of order $n-1$ or less is RP, and let $H= G_0 + \left(\bigcup_{i=1}^k G_i \right)$ be a replacement graph of order $n$.
	Denote $n(G_i) = b_i$ and $K = K_{b_0}(b_1, \dots, b_k)$.
	Let $\lambda$ be any positive integer such that $1\leq \lambda < n$.
	It suffices to prove that there is a partition $V(H) = \{X', Y'\}$ such that $|X'| = \lambda$ and $H[X']$, $H[Y']$ are both RP.
	
	Since $H$ is a replacement graph, $K$ is RP by definition.
	Thus, there is a partition $V(K) = \{X,Y\}$ such that $|X| = \lambda$, and the induced subgraphs $K[X] = K_{x_0}(x_1, \dots, x_k)$ and $K[Y] = K_{y_0}(y_1, \dots, y_k)$ are RP.
	Note that $x_i + y_i = b_i$, and that we may have $x_i = 0$ ($y_i = 0$) for some $i$.
	Since $G_i$ is RP, it has a partition $V(G_i) = \{X_i, Y_i\}$ with $|X_i| = x_i$ and $|Y_i| = y_i$ such that $G_i[X_i]$ and $G_i[Y_i]$ are RP.
	
	Thus, let
	\[
		X' = \bigcup_{i=0}^k X_i \;\;\;\text{and}\;\;\; Y' = \bigcup_{i=0}^k Y_i.
	\]
	Note that $\{X',Y'\}$ is a partition of $V(H)$ such that $|X'| = x_0 + \dots + x_k = |X| = \lambda$.
	Further, both $H[X']$ and $H[Y']$ are replacement graphs:
	\[
		H[X'] = G_0[X_0] + \left(\bigcup_{i=1}^k G_i[X_i] \right) \;\;\;\text{and}\;\;\; H[Y'] = G_0[Y_0] + \left(\bigcup_{i=1}^k G_i[Y_i] \right).
	\]
	By the induction hypothesis, both $H[X']$ and $H[Y']$ are RP, so $H$ is RP.
\end{proof}

\section{RP spanning subgraphs}\label{sec:spanning}

It is clear that every graph with an RP (AP) spanning tree is RP (AP).
In \cite{ravdgraphs_baudon}, it was shown that an AP graph need not have an AP spanning tree.
Using the sequential join, it is easy to construct RP graphs that do not have a spanning tripode $T(a,b,c)$ (and thus do not have an RP spanning tree).
For example, the graph $T(1,1,2) + K_1 + T(1,1,2)$ is RP but has no spanning tripode.
In this section, we give a necessary condition for a graph to have a spanning tree homeomorphic to $K_{1,k}$ ($k\in \mathbb{N}$).

\begin{thm}
	Let $G$ be a graph, $k\geq 2$ a positive integer and $S$ a subset of $V(G)$. 
	If $c(G-S) \geq |S| + k$, then $G$ does not have a spanning subdivision of $K_{1,k}$.
	\label{thm_no_spanning_spider}
\end{thm}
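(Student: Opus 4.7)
The plan is to argue by contradiction. Suppose $G$ contains a spanning subgraph $T$ that is a subdivision of $K_{1,k}$, and let $v$ denote the unique vertex of degree $k$ in $T$. Since $T$ is spanning, every component of $G-S$ is contained in a component of $T-S$, so $c(G-S)\leq c(T-S)$. It therefore suffices to prove $c(T-S)\leq |S|+k-1$, which contradicts the hypothesis $c(G-S)\geq |S|+k$.

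To bound $c(T-S)$ I would use the following standard identity for any tree $T$ and any $S\subseteq V(T)$, obtained from the fact that a forest on $N$ vertices with $M$ edges has $N-M$ components:
\[
	c(T-S) \;=\; 1 + \sum_{u\in S}\bigl(\deg_T(u)-1\bigr) - |E(T[S])| \;\leq\; 1 + \sum_{u\in S}\bigl(\deg_T(u)-1\bigr).
\]
This follows in one line because $T-S$ has $n(T)-|S|$ vertices and exactly $(n(T)-1)-\sum_{u\in S}\deg_T(u)+|E(T[S])|$ edges.

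The reason this gives what we want is that a subdivision of $K_{1,k}$ has a very restricted degree sequence: the $k$ endpoints of the legs have degree $1$, the internal subdivision vertices have degree $2$, and $v$ is the only vertex of degree $k$. Hence each $u\in S\setminus\{v\}$ contributes at most $1$ to the sum, while $v$ itself contributes $k-1$ whenever $v\in S$. Splitting on whether $v\in S$: if $v\in S$ then $c(T-S)\leq 1+(k-1)+(|S|-1)=|S|+k-1$; if instead $v\notin S$ then $c(T-S)\leq 1+|S|$, and this is at most $|S|+k-1$ because $k\geq 2$. In both cases $c(G-S)\leq c(T-S)\leq |S|+k-1 < |S|+k$, contradicting the hypothesis.

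I do not anticipate a real obstacle here; the argument is essentially a degree-count inside the spanning tree $T$. The only nuance is the two-case split, and it is worth observing that the hypothesis $k\geq 2$ enters \emph{precisely} in the case $v\notin S$, where we need $|S|+1\leq |S|+k-1$. This also explains why the statement would fail for $k=1$, since every non-trivial path is its own spanning subdivision of $K_{1,1}$ yet admits cut vertices.
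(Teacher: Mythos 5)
Your proof is correct, but it takes a genuinely different route from the paper's. The paper works directly in $G$: it splits the spanning subdivision $T$ into its branch vertex $v$ and the $k$ maximal paths of $T-v$, bounds for each path the number of components of $G-S$ it can meet by the number of vertices of $S$ it contains (plus one in the case $v\in S$), and then sums these bounds over the $k$ paths to contradict $c(G-S)\geq |S|+k$. You instead pass to the tree itself via $c(G-S)\leq c(T-S)$ and bound $c(T-S)$ with the forest identity $c(T-S)=1+\sum_{u\in S}(\deg_T(u)-1)-|E(T[S])|$, after which the degree sequence of a subdivision of $K_{1,k}$ (every vertex of degree at most $2$ except the branch vertex, of degree $k$) finishes the job with the same two-case split on whether $v\in S$. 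Your version is more compact, isolates a reusable identity, and makes transparent exactly where $k\geq 2$ is needed; the paper's path-counting is more hands-on but encodes the same information. Two small wording corrections: the containment you state goes the wrong way --- it is each component of $T-S$ that lies inside a component of $G-S$, which is precisely what yields $c(G-S)\leq c(T-S)$ --- and for $k=2$ the vertex $v$ need not be the \emph{unique} vertex of degree $k$, since a subdivision of $K_{1,2}$ is a path. Neither point is actually used beyond the bound $\deg_T(u)\leq 2$ for $u\neq v$, so the argument stands as written.
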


\begin{proof}
	Let $c(G-S) = c$ and $|S| = s$. 
	Let $G_1, G_2, \dots, G_c$ denote the components of $G-S$.
	Assume contrary to the theorem statement that there is a subdivision $T$ of $K_{1,k}$ spanning $G$, and that $c-s \geq k$.
	Denote by $v$ the vertex of $T$ such that $d_T(v) = k$, and let $P_1, P_2, \dots, P_k$ be the $k$ maximal paths of $T-v$.
	There are two cases to consider. 
	In both cases, we count the number of components $G_i$ that each path $P_j$ intersects.
	
	\textit{Case 1:} $v\notin S$.\\
	Assume without loss of generality that $v\in G_1$.
	Let $\zeta(P_i) = |\{j\geq 2 : V(G_j)\cap V(P_i) \neq \emptyset\}|$ be the number of components (other than $G_1$) that contain a vertex of $P_i$.
	Between any two vertices of $P_i$ that lie in different components of $G-S$, there must be a vertex of $S$.
	Therefore, for all $i$, we have
	\begin{align}
		\zeta(P_i) \leq |S\cap V(P_i)|.
		\label{eq_path_1}
	\end{align}
	Each of the components $G_2, G_3, \dots, G_c$ must intersect at least one path $P_i$, so
	\begin{align}
		c-1 \leq \sum_{i=1}^{k} \zeta(P_i).
		\label{eq_path_2}
	\end{align}
	Since the paths $P_i$ are disjoint, we have 
	\begin{align}
		\sum_{i=1}^{k} |S\cap V(P_i)| \leq |S| = s.
		\label{eq_path_3}
	\end{align}
	Combining Inequalities \ref{eq_path_1}, \ref{eq_path_2} and \ref{eq_path_3}, we obtain the following inequality
	\begin{align*}
		c-1 \leq \sum_{i=1}^{k} \zeta(P_i) \leq \sum_{i=1}^{k} |S\cap V(P_i)| \leq s.
	\end{align*}
	But this contradicts the fact that $c-s \geq k \geq 2$.
	
	\textit{Case 2:} $v\in S$\\
	Let $\eta(P_i) = |\{j : V(G_j)\cap V(P_i) \neq \emptyset\}|$ be the number of components that contain a vertex of $P_i$.
	Between any two vertices of $P_i$ from different components of $G-S$, there must be a vertex of $S$, however, it is possible that the end vertices of $P_i$ are not in $S$. Thus, for all $i$, the following inequality holds
	\begin{align}
		\eta(P_i) \leq |S\cap V(P_i)| + 1.
		\label{eq_bpath_1}
	\end{align}
	Since every component $G_i$ intersects at least one path $P_i$:
	\begin{align}
		c \leq \sum_{i=1}^{k} \eta(P_i).
		\label{eq_bpath_2}
	\end{align}
	Since the paths $P_i$ are disjoint, and none contain the vertex $v\in S$, we have
	\begin{align}
		\sum_{i=1}^{k} |S\cap V(P_i)| \leq |S-\{v\}| = s-1.
		\label{eq_bpath_3}
	\end{align}
	Putting Inequalities \ref{eq_bpath_1}, \ref{eq_bpath_2} and \ref{eq_bpath_3} together, we obtain
	\begin{align*}
		c \leq \sum_{i=1}^{k} \eta(P_i) \leq \sum_{i=1}^{k} |S\cap V(P_i)| + k \leq s - 1 + k.
	\end{align*}
	But this contradicts the fact that $c\geq s+k$.
\end{proof}

By Theorem \ref{thm:pw_tree_characterisation}, every RP tree on at least 3 vertices is either a path (subdivided $K_{1,2}$) or a subdivided $K_{1,3}$. 
Further, every RP balloon is spanned by a subdivided $K_{1,k}$ with $k\leq 5$, per Theorem \ref{thm_rp_balloon_toughness}.
Thus, we have the following corollary.

\begin{corollary}
	If $G=(V,E)$ contains an RP spanning tree, then every $S\subset V$ satisfies $c(G-S) \leq |S| + 2$.
	If $G$ is spanned by an RP balloon, then every $S\subset V$ satisfies $c(G-S) \leq |S| + 4$.
	\label{cor_c_minus_s_pw}
\end{corollary}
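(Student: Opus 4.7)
The plan is to obtain both bounds as essentially immediate consequences of the contrapositive of Theorem \ref{thm_no_spanning_spider}, once we observe that RP trees and RP balloons contain spanning subdivisions of $K_{1,k}$ for suitably small $k$. Explicitly, Theorem \ref{thm_no_spanning_spider} says that if $c(G-S) \geq |S|+k$ then $G$ has no spanning subdivision of $K_{1,k}$; so if $G$ does have such a spanning subdivision, then $c(G-S) \leq |S|+k-1$ for every $S \subset V$.

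For the first claim, I would invoke Theorem \ref{thm:pw_tree_characterisation} to note that every RP tree on at least three vertices is either a path or a tripode $T(a,b,c)$. A path is a subdivision of $K_{1,2}$ and a tripode is a subdivision of $K_{1,3}$, so any RP spanning tree of $G$ (on at least three vertices) yields a spanning subdivision of $K_{1,k}$ with $k \in \{2,3\}$. Applying the contrapositive above with $k \leq 3$ gives $c(G-S) \leq |S|+2$ for every $S \subset V$. The degenerate cases $n(G) \in \{1,2\}$ are trivial since $c(G-S) \leq n(G) - |S| \leq |S|+2$.

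For the second claim, I would first verify that a balloon $B(b_1,\dots,b_k)$, with degree-$k$ vertices $u$ and $v$, contains a spanning subdivision of $K_{1,k}$. This is straightforward: delete all but one of the edges incident to $v$. The resulting graph is a tree in which $u$ has degree $k$, exactly one of the $k$ arms starting at $u$ has been extended through $v$, and every other vertex has degree at most $2$. This is a subdivision of $K_{1,k}$ spanning $B(b_1,\dots,b_k)$. Now by Theorem \ref{thm_rp_balloon_toughness}, an RP balloon must have $k \leq 5$, so if $G$ is spanned by an RP balloon then $G$ has a spanning subdivision of $K_{1,k}$ for some $k \leq 5$, and the contrapositive of Theorem \ref{thm_no_spanning_spider} gives $c(G-S) \leq |S|+4$ for every $S \subset V$.

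There is no genuine obstacle here; the corollary is really a bookkeeping exercise combining the tree/balloon classification results with Theorem \ref{thm_no_spanning_spider}. The only step that is not purely a citation is the elementary verification that every balloon, RP or not, contains a spanning subdivision of a star, which I would state in one sentence as above.
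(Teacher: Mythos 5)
Your proof is correct and follows essentially the same route as the paper: apply the contrapositive of Theorem~\ref{thm_no_spanning_spider} together with Theorem~\ref{thm:pw_tree_characterisation} (RP trees are paths or tripodes, i.e.\ subdivided $K_{1,2}$ or $K_{1,3}$) and Theorem~\ref{thm_rp_balloon_toughness} ($k\leq 5$ for RP balloons). Your explicit one-line check that a balloon is spanned by a subdivided star, and your handling of the trivial small-order cases, only make explicit what the paper leaves implicit.
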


\section{Bounding \bm{$c(G-S)$} from below}\label{sec:lowbounds}

Let $G$ be an RP graph and $S\subseteq V(G)$.
Per Theorem \ref{thm:pw_tree_characterisation} and Observation \ref{obs_tripode_balloon_universal}, if $|S|=1$, then $c(G-S) \leq 3$, and the infinite family of RP tripodes $\{T(1,1,2k)\}_{k\in\mathbb{N}}$ all achieve this bound.
Theorem \ref{thm_rp_balloon_toughness} and Observation \ref{obs_tripode_balloon_universal} show that if $|S|=2$, then $c(G-S) \leq 5$, and the RP balloons $\{B(1,1,2,3,2k)\}_{k\in\mathbb{N}}$ achieve this bound \cite{ravdgraphs_baudon}.
In this section, we bound the maximum possible value of $c(G-S)$ from below.
In particular, we show that for all $s$, there are infinitely many RP graphs with an $s$-vertex cut $S$ such that $c(G-S) = 2s+1$.
Further, we prove that there exists an RP graph $G$ with a cut $S$ such that $|S|=3$ and $c(G-S) = 8$.

\begin{lemma}
	The following graphs are RP:
	\begin{itemize}
		\item[(i)] $K_1(a,b,c)$ for $(a,b,c)=(2,4,6)$ and all $(a,b,c)$ in Table \ref{tab_Tabc_RP},
		\item[(ii)] $K_2(a,b,c,d)$ for $(a,b,c) = (2,4,6)$ and all $(a,b,c)$ in Table~\ref{tab_Tabc_RP}, and for all $d \in \mathbb{N}$,
		\item[(iii)] $K_0(0,\dots,d,\dots,0)$ for all $d\in \mathbb{N}$,
		\item[(iv)] $K_{b_0}(b_1,\dots, b_k)$ whenever $k\leq b_0 + 1$,
		\item[(v)] $K_2(1,1,1,2,4)$, and $K_2(1,1,2,3,c)$ for all $c\equiv 0 \pmod 2$.	
	\end{itemize}
	\label{lem_K21126k_parts}
\end{lemma}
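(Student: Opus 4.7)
My plan is to dispatch each of (i)--(v) separately, in each case either invoking an earlier construction or exhibiting a small spanning RP subgraph to which Lemma~\ref{lem_spanning_subgraph} applies. For (i), the tripode $T(a,b,c)$ is a spanning subgraph of $K_1(a,b,c)$: choose a Hamiltonian path in each of $K_a$, $K_b$, $K_c$ and attach one endpoint of each to the center. Theorem~\ref{thm:pw_tree_characterisation} certifies that $T(a,b,c)$ is RP for $(2,4,6)$ and every triple in Table~\ref{tab_Tabc_RP}, and Lemma~\ref{lem_spanning_subgraph} yields (i). Part (ii) then follows from Corollary~\ref{cor_ka_plus_kb_plus_1} with $t=1$: $K_0(d) \simeq K_d$ is complete and hence RP, $K_1(a,b,c)$ is RP by (i), so $K_{1+0+1}(a,b,c,d) = K_2(a,b,c,d)$ is RP. Part (iii) is immediate since $K_0(0,\dots,d,\dots,0) \simeq K_d$ is complete.

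For (iv), I would exhibit the sequential join
\[
K_{b_0-k+1} + K_{b_1} + K_1 + K_{b_2} + K_1 + \cdots + K_1 + K_{b_k}
\]
as a spanning subgraph of $K_{b_0}(b_1,\dots,b_k)$, where the initial $K_{b_0-k+1}$ is suppressed when $k = b_0+1$. The $b_0-k+1$ vertices of the leading clique together with the $k-1$ intermediate copies of $K_1$ contribute exactly $b_0$ vertices and play the role of the central clique, while each $K_{b_i}$ stays in the corresponding flap. Every edge of this sequential join is already present in the semistar, and each summand is complete (hence RP), so Proposition~\ref{prop_sequential_join} together with Lemma~\ref{lem_spanning_subgraph} finishes this case.

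For (v), the graph $K_2(1,1,2,3,c)$ with $c$ even contains the balloon $B(1,1,2,3,c)$ as a spanning subgraph (send the two apex vertices to the $K_2$ and route a Hamiltonian path of each flap between them); since $B(1,1,2,3,2k)$ is RP by \cite{ravdgraphs_baudon} (as recalled at the start of Section~\ref{sec:lowbounds}), Lemma~\ref{lem_spanning_subgraph} closes this sub-case. The remaining graph $K_2(1,1,1,2,4)$ lies just outside the reach of (i)--(iv): in the semistar notation it has $k = 5 > b_0 + 1 = 3$, and every decomposition permitted by Corollary~\ref{cor_ka_plus_kb_plus_1} would force a factor of the form $K_0(x,y)$ with $x,y \geq 1$, which is disconnected and hence not RP. My plan is therefore to apply Proposition~\ref{prop_simple_RP_char} directly: for each $a \in \{1,\dots,5\}$, exhibit a two-part partition $\{A,B\}$ with $|A| = a$ such that both $G[A]$ and $G[B]$ are RP. Symmetry among the three singleton flaps keeps the number of genuinely distinct cases small, and each residual semistar that arises (typically $K_2(1,1,2,4)$, $K_2(1,1,1,4)$, or $K_2(1,1,1,2)$) is itself covered by (ii) or (iv). The hardest step will be this by-hand verification for $K_2(1,1,1,2,4)$, since the graph sits precisely on the boundary where the general constructions of Sections~\ref{sec:build} and~\ref{sec:spanning} break down.
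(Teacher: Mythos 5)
Your parts (i)--(iv) are correct and essentially coincide with the paper's proof: the paper gets (i) from Theorem~\ref{thm:pw_tree_characterisation} together with Observation~\ref{obs_K_universal} (whose content is exactly your spanning-tripode argument plus Lemma~\ref{lem_spanning_subgraph}), gets (ii) from the spanning subgraph $K_1(a,b,c)+K_1+K_d$ (which is precisely what your invocation of Corollary~\ref{cor_ka_plus_kb_plus_1} with $t=1$ unwinds to), treats (iii) identically, and for (iv) observes that $K_{b_0}(b_1,\dots,b_k)$ is traceable when $k\leq b_0+1$ and applies Observation~\ref{obs_pw_is_pi_is_pg}; your explicit sequential join $K_{b_0-k+1}+K_{b_1}+K_1+\cdots+K_1+K_{b_k}$ is an equivalent repackaging of the same Hamiltonian-path structure via Proposition~\ref{prop_sequential_join}. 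For (v) you diverge from the paper, which simply cites \cite{ravdgraphs_baudon} for both graphs. Your route for $K_2(1,1,2,3,c)$, $c$ even, via the spanning balloon $B(1,1,2,3,c)$ is fine, though note it still rests on the literature for the RP-ness of those balloons, so it is not more self-contained than the citation.

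The one genuine gap is $K_2(1,1,1,2,4)$: you correctly observe it escapes (i)--(iv) and propose the right method (Proposition~\ref{prop_simple_RP_char} with $\lambda\in\{1,\dots,5\}$), but you never exhibit the partitions, and your list of expected residual graphs ($K_2(1,1,2,4)$, $K_2(1,1,1,4)$, $K_2(1,1,1,2)$) covers only $\lambda\in\{1,2,4\}$. The plan does close, but some care is needed: for $\lambda=2$ you must take the $2$-flap rather than two vertices of the $4$-flap (since $K_2(1,1,1,2,2)$ is not RP), and for $\lambda=5$ the greedy choice of a centre vertex plus the $4$-flap fails, because the complement $K_1(1,1,1,2)$ has a cut vertex leaving four components and so is not RP. A correct completion is: $\lambda=1$: a singleton flap and $K_2(1,1,2,4)$ (RP by (ii)); $\lambda=2$: the $2$-flap and $K_2(1,1,1,4)$ (RP by (ii)); $\lambda=3$: one centre vertex with two singleton flaps, inducing $K_1(1,1)\simeq P_3$, against $K_1(1,2,4)$ (RP by (i), since $4\equiv 1\pmod 3$ puts $(1,2,4)$ in Table~\ref{tab_Tabc_RP}); $\lambda=4$: the $4$-flap and $K_2(1,1,1,2)$ (RP by (ii)); $\lambda=5$: one centre vertex, two singleton flaps and the $2$-flap, inducing $K_1(1,1,2)$ (RP by (i)), against $K_1(1,4)$ (RP by (iv)). With these partitions written out, your argument for (v) is complete and, unlike the paper's, handles $K_2(1,1,1,2,4)$ without appealing to \cite{ravdgraphs_baudon}.
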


\begin{proof}
Part~{\em (i)} follows from Theorem \ref{thm:pw_tree_characterisation} and Observation \ref{obs_K_universal}.
Part~{\em (ii)} follows from {\em (i)}, Proposition \ref{prop_sequential_join}, and the fact that $K_2(a,b,c,d)$ is spanned by $K_1(a,b,c) + K_1 + K_d$.
The graph $K_0(0,\dots,d,\dots,0)$ is a complete graph of order $d$, from which {\em (iii)} follows.
Part~{\em (iv)} follows from an application of Observation \ref{obs_pw_is_pi_is_pg} to the traceable graph $K_{b_0}(b_1,\dots, b_k)$ for $k\leq b_0 + 1$.
Finally, {\em (v)} is proven in \cite{ravdgraphs_baudon}.
\end{proof}

We begin by finding a convenient infinite family of RP graphs with toughness $\frac{2}{5}$.

\begin{thm}
	For all $k\geq 0$, $k\in \mathbb{Z}$, the graph $K_2(1,1,2,6,k)$ is RP.
	\label{thm_k2_1126k}
\end{thm}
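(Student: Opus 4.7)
The approach is strong induction on $k$, combined with Proposition~\ref{prop_simple_RP_char}, which reduces RP-ness to exhibiting, for every split $a+b=n$, an $(a,b)$-partition into two RP parts. First I would handle the base case $k=0$: here $K_2(1,1,2,6,0) = K_2(1,1,2,6)$ is covered by Lemma~\ref{lem_K21126k_parts}(ii), applied with the triple $(1,1,2)$ from Table~\ref{tab_Tabc_RP} (an instance of $(1,1,c)$ with $c$ even) and $d=6$.

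For the inductive step, fix $k\geq 1$, assume the claim for all $0\leq k'<k$, and set $n=12+k$. For each $a\in\{1,\dots,n-1\}$ I need to produce an $(a,n-a)$-partition $\{X,Y\}$ of $V(K_2(1,1,2,6,k))$ whose parts both induce RP graphs. The easy ranges use the $K_k$ petal as a buffer: when $1\leq a\leq k$, letting $X$ consist of any $a$ vertices from the $K_k$ petal gives $G[X]=K_a$ and $G[Y]\simeq K_2(1,1,2,6,k-a)$, both RP (the latter by induction, or by the base case when $a=k$); a symmetric choice of $Y$ inside the $K_k$ petal handles $n-k\leq a\leq n-1$. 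These two ranges together cover all of $[1,n-1]$ as soon as $k\geq 11$, so the induction closes immediately in that regime.

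The remaining work lies in the finitely many pairs $(k,a)$ with $1\leq k\leq 10$ and $k+1\leq a\leq 11$. For each such pair I would construct an explicit partition $\{X,Y\}$ chosen so that $G[X]$ and $G[Y]$ each fall into a class of graphs already known to be RP: cliques, RP tripodes from Table~\ref{tab_Tabc_RP} (for instance $T(1,1,6)$ and $T(1,2,6)$), $4$-petal semistars $K_2(1,1,c,d)$ with $c$ even covered by Lemma~\ref{lem_K21126k_parts}(ii), or sequential joins of such pieces by Proposition~\ref{prop_sequential_join}. For example, in the case $(k,a)=(1,2)$ one can take $X$ to be both vertices of the $K_2$ petal, giving $G[X]\simeq K_2$ and $G[Y]\simeq K_2(1,1,1,6)$, which is RP by Lemma~\ref{lem_K21126k_parts}(ii) applied with $(1,1,6)\in$ Table~\ref{tab_Tabc_RP} and $d=1$.

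The main obstacle is this final residual range of middle $a$ for small $k$: there are $\sum_{k=1}^{10}(11-k)=55$ pairs $(k,a)$, each calling for its own explicit partition whose two induced subgraphs are both recognisably RP. While each individual verification is routine, the sheer number of cases is the principal expense of the proof, and its success depends essentially on the rich stockpile of small RP semistars and tripodes supplied by Lemma~\ref{lem_K21126k_parts} and Table~\ref{tab_Tabc_RP}, together with Corollary~\ref{cor_ka_plus_kb_plus_1} for combining them.
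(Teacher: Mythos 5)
Your proposal is correct and follows essentially the same route as the paper: the paper likewise disposes of all splits with one side inside the $K_k$ petal (giving $K_\lambda$ against $K_2(1,1,2,6,k-\lambda)$ by induction, which is the whole argument once $k\geq 11$) and handles the remaining middle-sized splits for $k\leq 10$ by explicit partitions into semistars known to be RP via Lemma~\ref{lem_K21126k_parts}. The finite case analysis you defer is exactly what the paper's tables supply (note that by the symmetry $a\leftrightarrow n-a$ only splits with $a\leq\lfloor(12+k)/2\rfloor$ need checking, roughly halving your $55$ pairs), and those cases do go through with the stockpile you cite, though some genuinely require part~(v) of Lemma~\ref{lem_K21126k_parts} rather than only cliques, tripodes and $4$-petal semistars --- e.g.\ for $(k,a)=(2,3)$ the only workable partition is a $K_3$ inside the $K_6$ petal against $K_2(1,1,2,3,2)$.
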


\begin{proof}
	We first prove that $G_k = K_2(1,1,2,6,k)$ is RP for all $k\in \{1,\dots, 10\}$. 
	Note that $n(G_k) = 12 + k$. 
	Thus, it suffices to prove that for all $\lambda \in \{1, \dots, \lfloor\frac{12+k}{2}\rfloor\}$, there is a partition $\{A,B\}$ of $V(G_k)$ such that $|A|=\lambda$ and $G_k[A]$, $G_k[B]$ are both RP.
	
 Table~\ref{table:parts6}-\ref{table:parts10} list all the (subgraphs induced by) partitions needed to show that $G_k$ is RP for $k\leq 10$. 
	All the subgraphs induced by the partitions are RP either by Lemma \ref{lem_K21126k_parts}, or by the previous cases.
	For example, the $|A|=5$ row of Table \ref{tab_k1} shows how to partition $V(G_1) = \{A,B\}$ so that $|A| = 5$ and $G_1[A]$, $G_1[B]$ are both RP (see Figure \ref{fig_g1_s5}).  
	
		To prove $G_k$ is RP for $k\geq 11$, we use induction. 
		Let $k\geq 11$, assume $G_k$ is RP for all $j < k$, and let $\lambda$ be any integer in $\{1,\dots, \lfloor \frac{12+k}{2}\rfloor\}$. 
		Then we can partition $V(G_k)$ into two parts $\{A,B\}$ where $|A|=\lambda$ by picking $A$ such that $G_k[A] = K_0(0,0,0,0,\lambda)\simeq K_\lambda$ and $G_k[B] = K_2(1,1,2,6,k-s)$.
		$G_k[A]$ is RP since it is a complete graph, and $G_k[B]$ is RP by induction, completing the proof.
\end{proof}
		
	\begin{figure}[h]
	\begin{center}
	\begin{tikzpicture}
		[scale=0.8,inner sep=0.6mm, 
		vertex/.style={circle,thick,draw}, 
		thickedge/.style={line width=2pt}] 
		
		\begin{scope}[shift={(0,0)}]
			\node[vertex] (x) at (-0.5,0) {};
			\node[vertex, fill=black] (y) at (0.5,0)  {};
			
			\node[vertex] (1) at (-4,3) {};
			\node[vertex] (2) at (-3,3) {};
			
			\node[vertex] (3) at (-2,3) {};	
			\node[vertex] (4) at (-1,3) {};
			
			\node[vertex, fill=black] (5) at (-0,3) {};
			\node[vertex, fill=black] (6) at (0.6,3.8) {};
			\node[vertex, fill=black] (7) at (1.5,4.2) {};
			\node[vertex, fill=black] (8) at (2.5,4.2) {};
			\node[vertex, fill=black] (9) at (3.4,3.8) {};
			\node[vertex, fill=black] (10) at (4,3) {};
			
			\node[vertex, fill=black] (11) at (5,3) {};
			
			\foreach \i in {1,...,11}
				{
					\draw[black!35] (x)--(\i)--(y);
				}
		
			\draw[black!35] (x)--(y);
			\draw (1)--(x)--(2) (3)--(x)--(4);
			\draw (3)--(4);
			
			\foreach \i in {5,...,11}
			{
				\draw[thick] (\i)--(y);
			}
		
			\foreach \i in {5,...,10}
			{
				\foreach \j in {\i,...,10}
				{
					\draw[thick] (\i)--(\j);
				}
			}
		\end{scope}

	\end{tikzpicture}
	\end{center}
	\caption{$V(G_1) = \{A, B\}$ where $|A|=5$, $G_1[A] = K_1(1,1,2,0,0)$ and $G_1[B] = K_1(0,0,0,6,1)$. The subgraph $G_1[B]$ is bolded, and the edges not belonging to either $G_1[A]$ or $G_1[B]$ are light grey.}
	\label{fig_g1_s5}
	\end{figure}
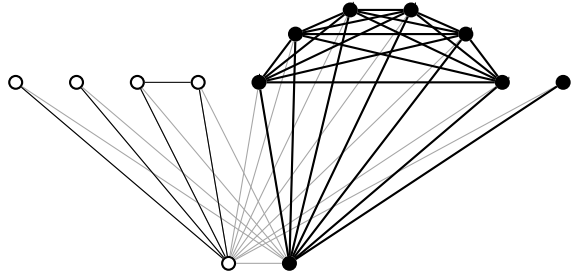
	
	\begin{table}[H]
		\centering
		\caption{Partitions of $G_1$ for $\lambda \leq \lfloor \frac{12+1}{2}\rfloor = 6$.}
		\label{tab_k1}
		\begin{tabular}{|l|l|l|l|l|l|l|}
			\cline{1-3} \cline{5-7}
			$\lambda$ & $G_1[A]$      & $G_1[B]$      &  & $\lambda$ & $G_1[A]$      & $G_1[B]$      \\ \cline{1-3} \cline{5-7} 
			1         & $K_0(0,0,0,0,1)$ & $K_2(1,1,2,6,0)$ &  & 2         & $K_0(0,0,2,0,0)$ & $K_2(1,1,0,6,1)$ \\ \cline{1-3} \cline{5-7} 
			3         & $K_1(1,1,0,0,0)$ & $K_1(0,0,2,6,1)$ &  & 4         & $K_1(1,0,2,0,0)$ & $K_1(0,1,0,6,1)$ \\ \cline{1-3} \cline{5-7} 
			5         & $K_1(1,1,2,0,0)$ & $K_1(0,0,0,6,1)$ &  & 6         & $K_0(0,0,0,6,0)$ & $K_2(1,1,2,0,1)$ \\ \cline{1-3} \cline{5-7} 
		\end{tabular}\label{table:parts6}
	\end{table}
	\begin{table}[H]
		\centering
		\caption{Partitions of $G_2$ for $\lambda \leq \lfloor \frac{12+2}{2}\rfloor = 7$.}
		\label{tab_k2}
		\begin{tabular}{|l|l|l|l|l|l|l|}
			\cline{1-3} \cline{5-7}
			$\lambda$ & $G_2[A]$            & $G_2[B]$              &  & $\lambda$ & $G_2[A]$      & $G_2[B]$      \\ \cline{1-3} \cline{5-7} 
			$\leq 2$  & $K_0(0,0,0,0,\lambda)$ & $K_2(1,1,2,6,2-\lambda)$ &  & 3         & $K_0(0,0,0,3,0)$ & $K_2(1,1,2,3,2)$ \\ \cline{1-3} \cline{5-7} 
			4         & $K_1(1,0,2,0,0)$       & $K_1(0,1,0,6,2)$         &  & 5         & $K_1(1,1,2,0,0)$ & $K_1(0,0,0,6,1)$ \\ \cline{1-3} \cline{5-7} 
			6         & $K_0(0,0,0,6,0)$       & $K_2(1,1,2,0,2)$         &  & 7         & $K_1(1,0,2,3,0)$ & $K_1(0,1,0,3,2)$ \\ \cline{1-3} \cline{5-7} 
		\end{tabular}
	\end{table}
	\begin{table}[H]
		\centering
		\caption{Partitions of $G_3$ for $\lambda \leq \lfloor \frac{12+3}{2}\rfloor = 7$.}
		\label{tab_k3}
		\begin{tabular}{|l|l|l|llll}
			\cline{1-3} \cline{5-7}
			$\lambda$ & $G_3[A]$            & $G_3[B]$              & \multicolumn{1}{l|}{} & \multicolumn{1}{l|}{$\lambda$} & \multicolumn{1}{l|}{$G_3[A]$}      & \multicolumn{1}{l|}{$G_3[B]$}      \\ \cline{1-3} \cline{5-7} 
			$\leq 3$  & $K_0(0,0,0,0,\lambda)$ & $K_2(1,1,2,6,3-\lambda)$ & \multicolumn{1}{l|}{} & \multicolumn{1}{l|}{4}         & \multicolumn{1}{l|}{$K_0(0,0,0,4,0)$} & \multicolumn{1}{l|}{$K_2(1,1,2,2,3)$} \\ \cline{1-3} \cline{5-7} 
			5         & $K_1(1,1,2,0,0)$       & $K_1(0,0,0,6,3)$         & \multicolumn{1}{l|}{} & \multicolumn{1}{l|}{6}         & \multicolumn{1}{l|}{$K_0(0,0,0,6,0)$} & \multicolumn{1}{l|}{$K_2(1,1,2,0,3)$} \\ \cline{1-3} \cline{5-7} 
			7         & $K_1(1,0,2,0,3)$       & $K_1(0,1,0,6,0)$         &                       &                                &                                       &                                       \\ \cline{1-3}
		\end{tabular}
	\end{table}
	\begin{table}[H]
		\centering
		\caption{Partitions of $G_4$ for $\lambda \leq \lfloor \frac{12+4}{2}\rfloor = 8$.}
		\label{tab_k4}
		\begin{tabular}{|l|l|l|llll}
			\cline{1-3} \cline{5-7}
			$\lambda$ & $G_4[A]$            & $G_4[B]$              & \multicolumn{1}{l|}{} & \multicolumn{1}{l|}{$\lambda$} & \multicolumn{1}{l|}{$G_4[A]$}      & \multicolumn{1}{l|}{$G_4[B]$}      \\ \cline{1-3} \cline{5-7} 
			$\leq 4$  & $K_0(0,0,0,0,\lambda)$ & $K_2(1,1,2,6,4-\lambda)$ & \multicolumn{1}{l|}{} & \multicolumn{1}{l|}{5}         & \multicolumn{1}{l|}{$K_1(1,1,2,0,0)$} & \multicolumn{1}{l|}{$K_1(0,0,0,6,4)$} \\ \cline{1-3} \cline{5-7} 
			6         & $K_0(0,0,0,6,0)$       & $K_2(1,1,2,0,4)$         & \multicolumn{1}{l|}{} & \multicolumn{1}{l|}{7}         & \multicolumn{1}{l|}{$K_1(1,0,2,0,3)$} & \multicolumn{1}{l|}{$K_1(0,1,0,6,1)$} \\ \cline{1-3} \cline{5-7} 
			8         & $K_1(1,0,2,0,4)$       & $K_1(0,1,0,6,0)$         &                       &                                &                                       &                                       \\ \cline{1-3}
		\end{tabular}
	\end{table}
	\begin{table}[H]
		\centering
		\caption{Partitions of $G_5$ for $\lambda \leq \lfloor \frac{12+5}{2}\rfloor = 8$.}
		\label{tab_k5}
		\begin{tabular}{|l|l|l|l|l|l|l|}
			\cline{1-3} \cline{5-7}
			$\lambda$ & $G_5[A]$            & $G_5[B]$              &  & $\lambda$ & $G_5[A]$      & $G_5[B]$      \\ \cline{1-3} \cline{5-7} 
			$\leq 5$  & $K_0(0,0,0,0,\lambda)$ & $K_2(1,1,2,6,5-\lambda)$ &  & 6         & $K_0(0,0,0,6,0)$ & $K_2(1,1,2,0,5)$ \\ \cline{1-3} \cline{5-7} 
			7         & $K_1(1,0,0,0,5)$       & $K_1(0,1,2,6,0)$         &  & 8         & $K_1(0,0,2,0,5)$ & $K_1(1,1,0,6,0)$ \\ \cline{1-3} \cline{5-7} 
		\end{tabular}
	\end{table}
	\begin{table}[H]
		\centering
		\caption{Partitions of $G_6$ for $\lambda \leq \lfloor \frac{12+6}{2}\rfloor = 9$.}
		\label{tab_k6}
		\begin{tabular}{|l|l|l|l|l|l|l|}
			\cline{1-3} \cline{5-7}
			$\lambda$ & $G_6[A]$            & $G_6[B]$              &  & $\lambda$ & $G_6[A]$      & $G_6[B]$      \\ \cline{1-3} \cline{5-7} 
			$\leq 6$  & $K_0(0,0,0,0,\lambda)$ & $K_2(1,1,2,6,6-\lambda)$ &  & 7         & $K_1(1,0,2,3,0)$ & $K_1(0,1,0,3,6)$ \\ \cline{1-3} \cline{5-7} 
			8         & $K_1(1,0,0,0,6)$       & $K_1(0,1,2,6,0)$         &  & 9         & $K_1(1,1,0,0,6)$ & $K_1(0,0,2,6,0)$ \\ \cline{1-3} \cline{5-7} 
		\end{tabular}
	\end{table}
	\begin{table}[H]
		\centering
		\caption{Partitions of $G_7$ for $\lambda \leq \lfloor \frac{12+7}{2}\rfloor = 9$.}
		\label{tab_k7}
		\begin{tabular}{|l|l|l|llll}
			\cline{1-3} \cline{5-7}
			$\lambda$ & $G_7[A]$            & $G_7[B]$              & \multicolumn{1}{l|}{} & \multicolumn{1}{l|}{$\lambda$} & \multicolumn{1}{l|}{$G_7[A]$}      & \multicolumn{1}{l|}{$G_7[B]$}      \\ \cline{1-3} \cline{5-7} 
			$\leq 7$  & $K_0(0,0,0,0,\lambda)$ & $K_2(1,1,2,6,7-\lambda)$ & \multicolumn{1}{l|}{} & \multicolumn{1}{l|}{8}         & \multicolumn{1}{l|}{$K_1(1,0,0,6,0)$} & \multicolumn{1}{l|}{$K_1(0,1,2,0,7)$} \\ \cline{1-3} \cline{5-7} 
			9         & $K_1(1,1,0,6,0)$       & $K_1(0,0,2,0,7)$         &                       &                                &                                       &                                       \\ \cline{1-3}
		\end{tabular}
	\end{table}
	\begin{table}[H]
		\centering
		\caption{Partitions of $G_8$ for $\lambda \leq \lfloor \frac{12+8}{2}\rfloor = 10$.}
		\label{tab_k8}
		\begin{tabular}{|l|l|l|llll}
			\cline{1-3} \cline{5-7}
			$\lambda$ & $G_8[A]$            & $G_8[B]$              & \multicolumn{1}{l|}{} & \multicolumn{1}{l|}{$\lambda$} & \multicolumn{1}{l|}{$G_8[A]$}      & \multicolumn{1}{l|}{$G_8[B]$}      \\ \cline{1-3} \cline{5-7} 
			$\leq 8$  & $K_0(0,0,0,0,\lambda)$ & $K_2(1,1,2,6,8-\lambda)$ & \multicolumn{1}{l|}{} & \multicolumn{1}{l|}{9}         & \multicolumn{1}{l|}{$K_1(1,1,0,6,0)$} & \multicolumn{1}{l|}{$K_1(0,0,2,0,8)$} \\ \cline{1-3} \cline{5-7} 
			10        & $K_1(1,0,2,6,0)$       & $K_1(0,1,0,0,8)$         &                       &                                &                                       &                                       \\ \cline{1-3}
		\end{tabular}
	\end{table}
	\begin{table}[H]
		\centering
		\caption{Partitions of $G_9$ for $\lambda \leq \lfloor \frac{12+9}{2}\rfloor = 10$.}
		\label{tab_k9}
		\begin{tabular}{|l|l|l|l|l|l|l|}
			\cline{1-3} \cline{5-7}
			$\lambda$ & $G_9[A]$            & $G_9[B]$              &  & $\lambda$ & $G_9[A]$      & $G_9[B]$      \\ \cline{1-3} \cline{5-7} 
			$\leq 9$  & $K_0(0,0,0,0,\lambda)$ & $K_2(1,1,2,6,9-\lambda)$ &  & 10        & $K_1(1,0,2,6,0)$ & $K_1(0,1,0,0,9)$ \\ \cline{1-3} \cline{5-7} 
		\end{tabular}
	\end{table}
	\begin{table}[H]
		\centering
		\caption{Partitions of $G_{10}$ for $\lambda \leq \lfloor \frac{12+9}{2}\rfloor = 10$.}
		\label{tab_k10}
		\begin{tabular}{|l|l|l|l|l|l|l|}
			\cline{1-3} \cline{5-7}
			$\lambda$ & $G_{10}[A]$            & $G_{10}[B]$               &  & $\lambda$ & $G_{10}[A]$      & $G_{10}[B]$      \\ \cline{1-3} \cline{5-7} 
			$\leq 10$ & $K_0(0,0,0,0,\lambda)$ & $K_2(1,1,2,6,10-\lambda)$ &  & 11        & $K_1(1,0,2,0,7)$ & $K_1(0,1,0,6,3)$ \\ \cline{1-3} \cline{5-7} 
		\end{tabular}\label{table:parts10}
	\end{table}

Using replacement graphs formed from the graphs $K_2(1,1,2,6,j)$, $j$ a positive integer, we can create arbitrarily large RP graphs with arbitrarily large cuts $S$ having $2|S|+1$ components.

\begin{corollary}
	For all $s\geq 1$, there exists an infinite family $\mathcal{G}_s$ of graphs such that each graph $G$ in $\mathcal{G}_s$ has a vertex cut $S$ with $|S|=s$ and $c(G-S) = 2s+1$.
	\label{cor_2kplus1_family}
\end{corollary}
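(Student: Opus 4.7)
The plan is to prove the corollary by induction on $s$, stepping up by $2$ at a time, with two base cases covering both parities. The engine is the RP Replacement Theorem (Theorem \ref{thm_replacement}) applied to the outer semistars $K_2(1,1,2,6,k)$ just shown to be RP in Theorem \ref{thm_k2_1126k}.

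For $s=1$, I would take $\mathcal{G}_1 = \{T(1,1,2k) : k \in \mathbb{N}\}$: each tripode is RP by Theorem \ref{thm:pw_tree_characterisation}, and its center is a cut of size $1$ leaving $3 = 2(1)+1$ components. For $s=2$, I would take $\mathcal{G}_2 = \{K_2(1,1,2,6,k) : k \in \mathbb{N}\}$: these are RP by Theorem \ref{thm_k2_1126k}, and the central $K_2$ is a cut of size $2$ leaving $5 = 2(2)+1$ components.

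For the inductive step with $s \geq 3$, assume $\mathcal{G}_{s-2}$ is an infinite family as required. For each $H \in \mathcal{G}_{s-2}$ I would build
\[
G(H) \;=\; K_2 + \bigl(K_1 \cup K_1 \cup K_2 \cup K_6 \cup H\bigr),
\]
the replacement graph of the outer semistar $K_2(1,1,2,6,n(H))$, replacing the part of size $n(H)$ by $H$ itself and the other parts by the cliques of the corresponding orders. The outer semistar is RP by Theorem \ref{thm_k2_1126k} and each of $K_2, K_1, K_1, K_2, K_6, H$ is RP, so the Replacement Theorem yields $G(H)$ RP. Letting $T \subseteq V(H)$ be a cut of $H$ with $|T| = s-2$ and $c(H - T) = 2s-3$, the set $S = V(K_2) \cup T$ (where $V(K_2)$ is the center of the replacement) is a cut of $G(H)$ with $|S| = s$, and
\[
G(H) - S \;=\; K_1 \cup K_1 \cup K_2 \cup K_6 \cup (H - T)
\]
has exactly $4 + (2s-3) = 2s+1$ components. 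Since $n(G(H)) = 12 + n(H)$ strictly grows with $H$, varying $H$ over the infinite $\mathcal{G}_{s-2}$ produces infinitely many distinct graphs in $\mathcal{G}_s$.

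There is no real obstacle once Theorem \ref{thm_k2_1126k} and Theorem \ref{thm_replacement} are in hand; the only thing to verify carefully is the arithmetic $4 + (2s-3) = 2s+1$, which is exactly why the outer $K_2(1,1,2,6,\cdot)$ is the right engine: its four small parts contribute precisely the four ``extra'' components needed beyond those inherited from $H-T$ to maintain the $2s+1$ invariant while the cut grows by $2$.
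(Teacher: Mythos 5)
Your proof is correct and is essentially the paper's own argument: the same two base families $T(1,1,2k)$ and $K_2(1,1,2,6,k)$, followed by the same inductive step via the Replacement Theorem applied to the outer semistar $K_2(1,1,2,6,\cdot)$, stepping the cut size up by two each time. Your explicit verification of the component count $4+(2s-3)=2s+1$ is a welcome bit of detail the paper leaves to a figure, but the approach is identical.
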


\begin{proof}
	For $j$ a positive integer, let $H_1(j) = T(1,1,2j)$, and let $H_2(j) = K_2(1,1,2,6,j)$. These graphs are all RP by Theorems \ref{thm:pw_tree_characterisation} and \ref{thm_k2_1126k}.
	Define $H_{s+2}(j)$ inductively by setting
	\[
		H_{s+2}(j) = K_2 + (K_1 \cup K_1 \cup K_2 \cup K_6 \cup H_s(j))
	\]
	By Theorems~\ref{thm_replacement} and \ref{thm_k2_1126k}, the graph $H_{s+2}(j)$ is RP.
	It's clear that the graph $H_s(j)$ has a vertex cut $S$ with $|S|=s$ and $c(G-S) = 2s+1$ (for example, see Figure \ref{fig_h3_1}). 
	To complete the proof, we let $\mathcal{G}_s = \{H_s(j)\}_{j\in\mathbb{N}}$.
\end{proof}

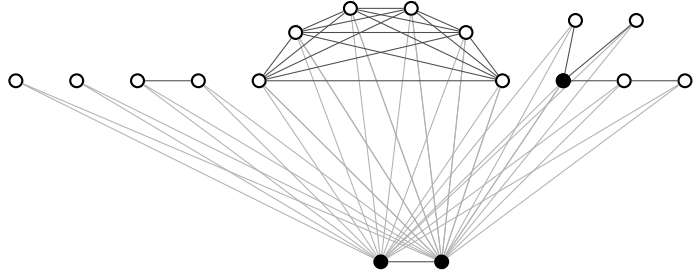
\begin{figure}[h]
	\begin{center}
	\begin{tikzpicture}
		[scale=0.8,inner sep=0.6mm, 
		vertex/.style={circle,thick,draw}, 
		thickedge/.style={line width=2pt}] 
		
		\begin{scope}[shift={(0,0)}]
			\node[vertex, fill=black] (x) at (2,0) {};
			\node[vertex, fill=black] (y) at (3,0)  {};
			
			\node[vertex] (1) at (-4,3) {};
			\node[vertex] (2) at (-3,3) {};
			
			\node[vertex] (3) at (-2,3) {};	
			\node[vertex] (4) at (-1,3) {};
			
			\node[vertex] (5) at (-0,3) {};
			\node[vertex] (6) at (0.6,3.8) {};
			\node[vertex] (7) at (1.5,4.2) {};
			\node[vertex] (8) at (2.5,4.2) {};
			\node[vertex] (9) at (3.4,3.8) {};
			\node[vertex] (10) at (4,3) {};
			
			\node[vertex, fill=black] (11) at (5,3) {};
			\node[vertex] (12) at (5.2,4) {};
			\node[vertex] (13) at (6.2,4) {};
			\node[vertex] (14) at (6,3) {};
			\node[vertex] (15) at (7,3) {};
			
			\foreach \i in {1,...,15}
				{
					\draw[black!30] (x)--(\i)--(y);
				}
		
			\draw[black!70] (x)--(y);
			\draw[black!70] (3)--(4);
			
			\foreach \i in {5,...,11}
			{
				\draw[black!30] (\i)--(y);
			}
		
			\foreach \i in {5,...,10}
			{
				\foreach \j in {\i,...,10}
				{
					\draw[black!70] (\i)--(\j);
				}
			}
		
			\draw[black!70] (11)--(12) (11)--(13) (11)--(14)--(15);
		
			\node[vertex, fill=black] (11) at (5,3) {};
		\end{scope}

	\end{tikzpicture}
\end{center}
\caption{The graph $H_3(1)$. The vertices of a cut set $S$ with $|S|=3$ and $c(H_3(1)-S) = 7$ are bolded.}
\label{fig_h3_1}
\end{figure}

\begin{lemma}
    The graphs $K_2(1,2,3,4,6)$ and $K_2(1,2,2,3,4)$ are RP.
    \label{lem_12346_12234}
\end{lemma}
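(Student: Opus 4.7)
The plan is to apply Proposition~\ref{prop_simple_RP_char} directly, mimicking the tabular approach used for Theorem~\ref{thm_k2_1126k}. That is, for each of the two graphs $G$ of order $n$, and for each $\lambda\in\{1,\dots,\lfloor n/2\rfloor\}$, I would exhibit a bipartition $\{A,B\}$ of $V(G)$ with $|A|=\lambda$ such that $G[A]$ and $G[B]$ are both RP. Since $G = K_{b_0}(b_1,\dots,b_5)$, any induced subgraph obtained by selecting $c_0$ vertices from the core $K_{b_0}$ and $c_i$ vertices from the $i$th leaf clique $K_{b_i}$ is again the semistar $K_{c_0}(c_1,\dots,c_5)$; this is connected whenever $c_0\ge 1$ (or $c_0=0$ with at most one nonzero $c_i$). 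So for each $\lambda$ the search reduces to choosing integer $6$-tuples $(c_0,\dots,c_5)$ with $0\le c_i\le b_i$ and $\sum c_i=\lambda$ whose associated half-semistars are both known to be RP.

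To certify each half, I would draw on the full toolkit built up in this section: Lemma~\ref{lem_K21126k_parts}, which supplies the RP semistars $K_1(a,b,c)$ for $(a,b,c)\in\{(2,4,6)\}\cup$~Table~\ref{tab_Tabc_RP}, the complete graphs $K_d$, all semistars with $k\le b_0+1$, and the sporadic cases $K_2(1,1,1,2,4)$ and $K_2(1,1,2,3,c)$ for even $c$; Theorem~\ref{thm_k2_1126k} for the entire family $K_2(1,1,2,6,k)$; and Corollary~\ref{cor_ka_plus_kb_plus_1} together with Lemma~\ref{lem_spanning_subgraph}, which let us recognise an RP semistar as soon as it contains a sequential join of smaller RP semistars as a spanning subgraph. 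The output would be two small tables in the style of Tables~\ref{tab_k1}--\ref{tab_k10}: 7 rows for $K_2(1,2,2,3,4)$ and 9 rows for $K_2(1,2,3,4,6)$.

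I expect most rows to be routine. For small $\lambda$ one takes $c_0=0$ and peels $\lambda$ vertices off a single leaf clique, leaving a sub-semistar such as $K_2(1,2,2,3,4-\lambda)$ or $K_2(1,2,3,4,6-\lambda)$ on the other side, which is then recognised either directly via Lemma~\ref{lem_K21126k_parts}(ii),(iv) or via Corollary~\ref{cor_ka_plus_kb_plus_1} applied to two tripode-type pieces. For $\lambda$ near $n/2$, I plan to carve the leaves into two RP ``tripode-type'' semistars whose triples lie in Table~\ref{tab_Tabc_RP}, exploiting the abundance of RP triples available, e.g.\ $(1,2,3)$, $(1,2,4)$, $(1,2,6)$, $(1,3,4)$, $(1,3,6)$, $(1,4,6)$, and $(2,4,6)$; for instance a split of $K_2(1,2,3,4,6)$ with $\lambda=9$ by $A=K_1(2,6)$ on one side (with one core vertex, handled by Lemma~\ref{lem_K21126k_parts}(iv)) and $B=K_1(1,3,4)$ on the other (an RP tripode by Table~\ref{tab_Tabc_RP}) illustrates the general tactic.

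The main obstacle is the handful of intermediate $\lambda$ where both halves must be genuine multi-leaf semistars with nonempty core and where the allowable tripode triples constrain the arithmetic tightly. For these I anticipate needing ad hoc choices: either invoking Corollary~\ref{cor_ka_plus_kb_plus_1} with a positive $t$ so as to absorb surplus core vertices into the middle $K_t$ of a spanning sequential join, or appealing to the sporadic cases $K_2(1,1,1,2,4)$ and $K_2(1,1,2,3,c)$ from Lemma~\ref{lem_K21126k_parts}(v). Once a valid partition is recorded for every $\lambda$ in both graphs, Proposition~\ref{prop_simple_RP_char} delivers the lemma.
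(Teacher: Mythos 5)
Your proposal takes essentially the same route as the paper: the paper's proof is exactly the kind of case table you describe, exhibiting for each $\lambda$ a bipartition whose two halves are sub-semistars certified RP by Lemma~\ref{lem_K21126k_parts} and Theorem~\ref{thm_k2_1126k} (your $\lambda=9$ split of $K_2(1,2,3,4,6)$ into $K_1(1,3,4)$ and $K_1(2,6)$ is literally the paper's row). The only shortfall is that you stop at a plan rather than writing out all $7+9$ rows, which is the entire content of the paper's proof; the rows you defer do all go through with the tools you cite — for instance, for $K_2(1,2,2,3,4)$ with $\lambda=1$ the complement $K_2(1,1,2,3,4)$ is RP by Lemma~\ref{lem_K21126k_parts}(v), and every other complement is of the form $K_2(a,b,c,d)$ with $(a,b,c)$ in Table~\ref{tab_Tabc_RP}, so completing the tables is routine.
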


\begin{proof}
    The semistar $K_2(1,2,3,4,6)$ has 18 vertices.
    Table \ref{tab_k2_12346} below shows that for all $\lambda \in \{1,\dots, 9\}$, the graph $K_2(1,2,3,4,6)$ has a partition $\{A,B\}$ such that both parts induce RP graphs and $|A| = \lambda$. The parts are RP by Lemma \ref{lem_K21126k_parts} and Theorem \ref{thm_k2_1126k}.
    
    \begin{table}[h]
\centering
\caption{Partitions of $G=K_2(1,2,3,4,6)$ for $\lambda \leq 9$.}
\label{tab_k2_12346}
\begin{tabular}{|l|l|l|l|l|l|l|}
\cline{1-3} \cline{5-7}
$\lambda$ & $G[A]$      & $G[B]$      &  & $\lambda$ & $G[A]$      & $G[B]$      \\ \cline{1-3} \cline{5-7} 
1         & $K_0(1,0,0,0,0)$ & $K_2(0,2,3,4,6)$ &  & 2         & $K_0(0,2,0,0,0)$ & $K_2(1,0,3,4,6)$ \\ \cline{1-3} \cline{5-7} 
3         & $K_0(0,0,3,0,0)$ & $K_2(1,2,0,4,6)$ &  & 4         & $K_0(0,0,0,4,0)$ & $K_2(1,2,3,0,6)$ \\ \cline{1-3} \cline{5-7} 
5         & $K_1(1,0,3,0,0)$ & $K_1(0,2,0,4,6)$ &  & 6         & $K_0(0,0,0,0,6)$ & $K_2(1,2,3,4,0)$ \\ \cline{1-3} \cline{5-7} 
7         & $K_1(1,2,3,0,0)$ & $K_1(0,0,0,4,6)$ &  & 8         & $K_1(0,0,3,4,0)$ & $K_1(1,2,0,0,6)$ \\ \cline{1-3} \cline{5-7} 
9         & $K_1(1,0,3,4,0)$ & $K_1(0,2,0,0,6)$ &  &           &                  &                  \\ \cline{1-3} \cline{5-7} 
\end{tabular}
\end{table}

The proof that the 14-vertex graph $K_2(1,2,2,3,4)$ is RP follows similarly by considering Table \ref{tab_k2_12234}.

\begin{table}[h]
\centering
\caption{Partitions of $G=K_2(1,2,2,3,4)$ for $\lambda \leq 7$.}
\label{tab_k2_12234}
\begin{tabular}{|l|l|l|l|l|l|l|}
\cline{1-3} \cline{5-7}
$\lambda$ & $G[A]$      & $G[B]$      &  & $\lambda$ & $G[A]$      & $G[B]$      \\ \cline{1-3} \cline{5-7} 
1         & $K_0(0,1,0,0,0)$ & $K_2(1,1,2,3,4)$ &  & 2         & $K_0(0,2,0,0,0)$ & $K_2(1,0,2,3,4)$ \\ \cline{1-3} \cline{5-7} 
3         & $K_0(0,0,0,3,0)$ & $K_2(1,2,2,0,4)$ &  & 4         & $K_0(0,0,0,0,4)$ & $K_2(1,2,2,3,0)$ \\ \cline{1-3} \cline{5-7} 
5         & $K_1(1,1,2,0,0)$ & $K_1(0,1,0,3,4)$ &  & 6         & $K_1(0,0,2,3,0)$ & $K_1(1,2,0,0,4)$ \\ \cline{1-3} \cline{5-7} 
7         & $K_1(0,0,2,0,4)$ & $K_1(1,2,0,3,0)$ &  &           &                  &                  \\ \cline{1-3} \cline{5-7} 
\end{tabular}
\end{table}

\end{proof}

\begin{thm}
    The semistar $K_3(1,1,1,2,2,3,4,6)$ is RP.
    \label{thm_S3_k8}
\end{thm}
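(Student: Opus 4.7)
The plan is to apply Proposition~\ref{prop_simple_RP_char} to the $23$-vertex graph $G = K_3(1,1,1,2,2,3,4,6)$. Since $G$ is clearly connected, it suffices to exhibit, for every $\lambda \in \{1,\dots,11\}$, a partition $\{A,B\}$ of $V(G)$ with $|A| = \lambda$ and both $G[A]$, $G[B]$ RP. Every partition induced by choosing, for each of the nine cliques comprising the semistar (the central $K_3$ and the eight outer cliques of sizes $b_1,\dots,b_8 = 1,1,1,2,2,3,4,6$), how many of its vertices to place in $A$ produces a pair of semistars $K_{s_0}(s_1,\dots,s_8)$ and $K_{3-s_0}(b_1-s_1,\dots,b_8-s_8)$ (with zero entries suppressed). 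So the task is combinatorial: for each $\lambda$ find a tuple $(s_0,\dots,s_8)$ with $s_0 \le 3$, $s_i \le b_i$, $\sum_{i=0}^8 s_i = \lambda$, such that both resulting semistars lie in our catalogue of RP semistars.

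The catalogue is assembled from Lemma~\ref{lem_K21126k_parts}, Theorem~\ref{thm_k2_1126k} (the family $K_2(1,1,2,6,k)$), and Lemma~\ref{lem_12346_12234} ($K_2(1,2,3,4,6)$ and $K_2(1,2,2,3,4)$). A useful lift enlarges this catalogue: if $K_{b_0}(b_1,\dots,b_k)$ is RP, then so is $K_{b_0+t}(b_1,\dots,b_k)$ for every $t \ge 1$, because $K_t + K_{b_0}(b_1,\dots,b_k)$ is an RP spanning subgraph of $K_{b_0+t}(b_1,\dots,b_k)$ by Proposition~\ref{prop_sequential_join} and Lemma~\ref{lem_spanning_subgraph}. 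In particular, every RP $K_1$- or $K_2$-semistar yields a corresponding RP $K_3$-semistar with the same outer clique sizes, which I will use for partitions that place the entire central $K_3$ on one side.

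For each $\lambda$ I then search for a valid split. The typical winning choice pairs a $K_2$-semistar drawn from one of the families $K_2(1,1,2,6,k)$, $K_2(1,1,2,3,c)$ (even $c$), $K_2(1,1,1,2,4)$, $K_2(1,2,3,4,6)$, $K_2(1,2,2,3,4)$, or their centre-lifted $K_3$-versions, with an RP tripode $K_1(a,b,c)$ from Table~\ref{tab_Tabc_RP} or a single clique on the other side. For instance, for $\lambda = 11$ I would take $A$ to consist of two of the three central vertices together with the outer cliques of sizes $1,1,2,3,4$, giving $G[A] \simeq K_2(1,1,2,3,4)$ (RP by Lemma~\ref{lem_K21126k_parts}\emph{(v)} with $c = 4$) and $G[B] \simeq K_1(1,2,6)$ (RP by Lemma~\ref{lem_K21126k_parts}\emph{(i)}, since $(1,2,6)$ appears in Table~\ref{tab_Tabc_RP}). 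The proof would conclude as a single eleven-row verification table analogous to Tables~\ref{tab_k1}--\ref{tab_k10}.

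The main obstacle is those values of $\lambda$ that force both halves of the partition to contain at least three outer cliques each: there neither side can be a single clique or a short tripode, and both must be matched simultaneously against the catalogue. Feasibility relies on the richness of the available families, especially the one-parameter family $K_2(1,1,2,6,k)$, together with a careful choice of how to split the central $K_3$ (typically $2{+}1$ or $3{+}0$). If some intermediate $\lambda$ genuinely resists a direct match, the fallback is to first establish one or two auxiliary RP-semistar lemmas, in the spirit of Lemma~\ref{lem_12346_12234}, to fill the remaining patterns before returning to the main table.
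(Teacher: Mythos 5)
Your reduction (apply Proposition~\ref{prop_simple_RP_char}, note that every induced subgraph of a semistar is again a semistar, and check $\lambda=1,\dots,11$ against a catalogue of known RP semistars, using the centre-lift $K_{b_0+t}(b_1,\dots,b_k)\geq K_t + K_{b_0}(b_1,\dots,b_k)$) is exactly the paper's skeleton, and it does dispose of $\lambda\in\{5,7,8,9,10,11\}$ just as the paper does, via $K_2(1,1,2,6,k)$, $K_2(1,1,2,3,c)$ and Lemma~\ref{lem_12346_12234}. But there is a genuine gap at the small values $\lambda\in\{1,2,3,4,6\}$, and it is not the obstacle you flag. For these $\lambda$, connectivity of $G[A]$ forces $A$ to be (essentially) a single clique inside one outer part, or a few central vertices plus outer vertices; in the latter case $G[B]$ is a $K_2$- or $K_1$-semistar with at least $6$ outer parts, which is never RP (Observation~\ref{obs_tripode_balloon_universal} with Theorems~\ref{thm:pw_tree_characterisation} and~\ref{thm_rp_balloon_toughness}), so in every viable partition $G[B]$ is a $K_3$-semistar with $7$ or $8$ outer cliques. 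Such graphs are outside your catalogue and are unreachable by your lift: lifting can only raise the centre of an already-RP semistar, and no RP $K_2$-, $K_1$- or $K_0$-semistar has more than $5$ outer parts, so no RP $K_3$-semistar with $7$ outer parts arises this way. Your proposed fallback (``one or two auxiliary lemmas in the spirit of Lemma~\ref{lem_12346_12234}'') underestimates this: a direct table verification of, say, the $22$-vertex graph $K_3(1,1,2,2,3,4,6)$ is itself a task of the same order as the theorem.

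The missing idea, and the paper's actual device for these five cases, is the RP Replacement Theorem~\ref{thm_replacement} combined with Lemma~\ref{lem_spanning_subgraph}: certify the large complement by an RP spanning subgraph that is a replacement graph rather than a semistar. For example, for $\lambda=1$ the paper spans $T_1=K_3(1,1,2,2,3,4,6)$ by $K_1 + \left(K_1\cup K_6\cup K_2(1,2,2,3,4)\right)$, a replacement graph for the RP tripode $K_1(1,6,14)$ with the $14$-clique replaced by the RP graph $K_2(1,2,2,3,4)$; the cases $\lambda=2,3,4,6$ are handled analogously from $K_2(1,1,2,6,k)$ and $K_2(1,1,2,3,c)$. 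Without this (or an equivalent tool) your table cannot be completed. A minor further slip: your worked example for $\lambda=11$ puts $2+1+1+2+3+4=13$ vertices into $A$, so it actually realises the $\lambda=10$ split (the paper's $S_{10}=K_1(1,2,6)$, $T_{10}=K_2(1,1,2,3,4)$); for $\lambda=11$ one can take $G[A]\simeq K_2(1,1,2,2,3)$ and $G[B]\simeq K_1(1,4,6)$ as in the paper.
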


\begin{proof}
    Let $G = K_3(1,1,1,2,2,3,4,6)$, and note that $n(G) = 23$.
    We show that for all $\lambda \leq 11$, the vertex set $V$ of $G$ has a partition $\{A, B\}$ such that $|A| = \lambda$, and the induced graphs $G[A]$ and $G[B]$ are RP.

    \bm{$\lambda = 1:$} Let $S_1 = K_1$ be a 1-vertex component of $G$, and $T_1 = K_3(1,1,2,2,3,4,6)$. By Theorem \ref{thm_replacement}, Lemma \ref{lem_K21126k_parts} and Lemma \ref{lem_12346_12234}, we can construct an RP spanning subgraph $H$ of $T_1$.
    $H$ is an RP replacement graph made using $K_1(1,6,14)$ and $K_2(1,2,2,3,4)$:
    \[
        T_1 \geq H = K_1 + \left(K_1 \cup K_6 \cup K_2(1,2,2,3,4) \right).
    \]
    
    \bm{$\lambda = 2:$} Let $S_2 = K_2$ a 2-vertex component of $G$, and $T_2 = K_3(1,1,1,2,3,4,6)$. By Theorem \ref{thm_k2_1126k}, the graph $K_2(1,1,2,6,9)$ is RP. Thus, we can construct an RP spanning subgraph $H$ of $T_2$ using $K_2(1,1,2,6,9)$ and $K_1(1,3,4)$:
    \[
        T_2 \geq H = K_2 + \left( K_1 \cup K_1 \cup K_2 \cup K_6 \cup K_1(1,3,4) \right).
    \]
    
    \bm{$\lambda = 3:$} Let $S_3 = K_3$ be the 3-vertex component, and $T_3 = K_3(1,1,1,2,2,4,6)$. Using $K_2(1,1,2,6,8)$ and $K_1(1,2,4)$, we construct an RP replacement graph $H$ that spans $T_3$:
    \[
        T_3 \geq H = K_2 + \left(K_1 \cup K_1 \cup K_2 \cup K_6 \cup K_1(1,2,4) \right).
    \]
    
    \bm{$\lambda = 4:$} Let $S_4 = K_4$ and $T_4 = K_3(1,1,1,2,2,3,6)$. We construct an RP spanning subgraph $H$ of $T_4$:
    \[
        T_4 \geq H = K_2 + \left(K_1 \cup K_1 \cup K_2 \cup K_6 \cup K_1(1,2,3) \right).
    \]
    
    \bm{$\lambda = 5:$} Let $S_5 = K_1(1,1,2)$ and $T_5 = K_2(1,2,3,4,6)$.
    
    \bm{$\lambda = 6:$} Let $S_6 = K_6$ and $T_6 = K_3(1,1,1,2,2,3,4)$. The graph $H$ below is an RP spanning subgraph of $T_6$, constructed using $K_2(1,1,2,3,8)$ and $T_1(1,2,4)$:
    \[
        T_6 \geq H = K_2 + \left(K_1 \cup K_1 \cup K_2 \cup K_3 \cup K_1(1,2,4) \right).
    \]
    
    \bm{$\lambda = 7:$} Let $S_7 = K_1(1,2,3)$ and $T_7 = K_2(1,1,2,4,6)$.
    
    \bm{$\lambda = 8:$} Let $S_8 = K_1(1,2,4)$ and $T_8 = K_2(1,1,2,3,6)$.
    
    \bm{$\lambda = 9:$} Let $S_9 = K_1(1,3,4)$ and $T_9 = K_2(1,1,2,2,6)$.
    
    \bm{$\lambda = 10:$} Let $S_{10} = K_1(1,2,6)$ and $T_{10} = K_2(1,1,2,3,4)$.
    
    \bm{$\lambda = 11:$} Let $S_{11} = K_2(1,1,2,2,3)$ and $T_{11} = K_1(1,4,6)$.
\end{proof}

\section{Minimal RP graphs}\label{sec:minimal}

Let $b_0, \dots, b_k$ be positive integers.
Call $K_{b_0}(b_1,\dots, b_k)$ a \textbf{minimal \bm{$(b_0,k)$} RP semistar} if there do not exist positive integers $c_1,\dots, c_k$ such that both the following hold:
\begin{itemize}
	\item $K_{b_0}(c_1,\dots, c_k)$ is RP, and
	\item $K_{b_0}(c_1,\dots, c_k)$ is a proper subgraph of $K_{b_0}(b_1,\dots, b_k)$.
\end{itemize}
It is easy to see that $K_1(1,1,2)$ is the unique minimal $(1,3)$ RP semistar. 
By Observation~\ref{obs_K_universal}, every RP graph $G$ with a cut-vertex $v$ such that $c(G-v) = 3$ has order 5 or more.
In this section, we show that $K_2(1,1,2,2,3)$ and $K_2(1,1,1,2,4)$ are the only minimal $(2,5)$ RP semistars.
Thus, every graph $G$ with a $2$-vertex cut $S$ such that $c(G-S) = 5$ has order $11$ or more.
Further, we show that the RP semistar $K_3(1,1,1,2,2,3,4,6)$ is minimal.

Let $\mathcal{G}(b_0,k) = \{K_{b_0}(b_1,\dots, b_k) : 1\leq b_1 \leq \dots \leq b_k\}$. 
The poset $\mathcal{G}(b_0,k)$ ordered by subgraph inclusion embeds into $\mathbb{N}^k$ (with the product order) in the obvious way. 
Dickson's Lemma states that the product $\mathbb{N}^k$ contains neither infinite anti-chains, nor infinite strictly descending sequences \cite{dickson_lemma}.
\begin{remark}
	For each pair $(b_0,k)$ of positive integers, there are finitely many minimal $(b_0, k)$ RP semistars.
	\label{rem:fin_min_semistars}
\end{remark}

A well known theorem of Tutte states that a graph $G$ has a perfect matching if and only if for every vertex cut $S$ of $G$, the graph $G-S$ has at most $|S|$ odd components \cite{match_tutte}.
The next lemma shows that this necessary condition can be generalised to partitions with connected parts of any size.

If $S$ is a finite set, then let $|S|_k$ denote the number $j$ in $\{0,1,\dots, k-1\}$ such that $|S| \equiv j \pmod k$. 
If $G$ is a graph, and $S\subseteq V(G)$, then let 
\[
	w_k(G,S) = \frac{1}{k-1}\cdot\sum\left\{|V(C)|_k : C \text{ a component of } G-S \right\}.
\]

The following result is given in \cite{avdconnectivity_baudon}.

\begin{lemma}\textup{\cite{avdconnectivity_baudon}}
	Let $G$ be a connected graph, $S$ a vertex cut of $G$ with $|S| < c(G-S)$, and $k\geq 2$ a positive integer.
	If $G$ is AP, then
	\[
	|S| + 1 \geq w_k(G,S).
	\]
	\label{lem_modcut_old}
\end{lemma}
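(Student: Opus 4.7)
The strategy is to apply the AP hypothesis to an integer partition of $n$ whose parts are almost all equal to $k$, and then run a residue-counting argument modulo $k$. Writing $n = qk + t$ with $0 \leq t < k$, I consider the integer partition of $n$ consisting of $q$ parts equal to $k$ together with a single part equal to $t$ (omitted if $t = 0$). By the AP hypothesis, $V(G)$ admits a partition into connected blocks of these sizes.

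Next I classify each block as \emph{mixed} if it meets $S$ and \emph{pure} otherwise; a pure block is necessarily contained in a single component of $G - S$. Let $m_k$ denote the number of mixed $k$-blocks and $m_t \in \{0,1\}$ indicate whether the $t$-block (if present) is mixed. Since the $|S|$ vertices of $S$ are distributed among the mixed blocks with at least one per mixed block, we have $m_k + m_t \leq |S|$, and each mixed $k$-block contains at most $k-1$ vertices of $V(G)\setminus S$.

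The key step is the residue calculation. For each component $C$ of $G - S$, let $e_C$ be the number of vertices of $C$ lying in the $t$-block, so $\sum_C e_C$ equals $t$ if the $t$-block is pure and is at most $t-1$ if the $t$-block is mixed. The vertices of $C$ in pure $k$-blocks contribute a multiple of $k$ to $|V(C)|$, so the number of vertices of $C$ in the mixed $k$-blocks is congruent to $|V(C)|_k - e_C$ modulo $k$, and in particular is at least $|V(C)|_k - e_C$ (trivially if $e_C > |V(C)|_k$, since the left side is non-negative). Summing over $C$ and using that the total number of component vertices inside mixed $k$-blocks is at most $(k-1)\,m_k$ will yield
\[
\sum_{C} |V(C)|_k \leq (k-1)\, m_k + \sum_{C} e_C.
\]

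To finish I will split into two cases. If the $t$-block is pure or absent, then $\sum_C e_C \leq t \leq k-1$ and $m_k \leq |S|$, giving $\sum_C |V(C)|_k \leq (k-1)(|S|+1)$. If the $t$-block is mixed, then $\sum_C e_C \leq t-1$ and $m_k \leq |S|-1$, giving $\sum_C |V(C)|_k \leq (k-1)(|S|-1) + (t-1) \leq (k-1)(|S|+1)$. Dividing by $k-1$ gives the claim. I expect the main subtlety to be the case split around the $t$-block and the correct handling of the residue inequality when $e_C$ exceeds $|V(C)|_k$; everything else reduces to routine bookkeeping, and the ``$+1$'' in the statement appears precisely to absorb the single extra non-$k$ block in the partition.
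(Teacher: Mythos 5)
Your argument is correct, and it is essentially the same counting argument the paper relies on: the paper cites this lemma and instead proves the sharpened Lemma~\ref{lem_modcut}, whose proof likewise takes a partition of $V(G)$ into connected parts of size $k$ plus one remainder part, uses that every part meeting $S$ contains at least one vertex of $S$ and at most $k-1$ vertices outside $S$, and compares residues modulo $k$ component by component. The only real difference is bookkeeping: the paper routes the count through the auxiliary subgraphs $G'$ and $G^{*}$ so as to retain the sharper remainder term $\frac{|V(G)|_k}{k-1}$, whereas your direct case split on the $t$-block absorbs that remainder into the ``$+1$'' and gives exactly the stated bound; as in Corollary~\ref{cor_modcut_lemma}, your proof also never uses the hypothesis $|S| < c(G-S)$.
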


We give a slight sharpening of this lemma.
The proof is similar to the proof in \cite{avdconnectivity_baudon}, with care taken to track the remainder term $\frac{|V(G)|_k}{k-1}$.

\begin{lemma}
	Let $G$ be a connected graph, and $S$ a subset of $V(G)$.
	If $G$ has a partition into connected parts $T_1, T_2, \dots, T_m$ such that $|T_i| = k$ for all $i\leq m-1$, and $|T_m| \leq k$, then
	\[
		|S| + \frac{|V(G)|_k}{k-1} \geq w_k(G,S).
	\]
	\label{lem_modcut}
\end{lemma}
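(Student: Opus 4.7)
The plan is to prove the equivalent inequality
\[
(k-1)|S| + |V(G)|_k \;\geq\; \sum_{C} |V(C)|_k,
\]
where the sum runs over components $C$ of $G-S$, by analysing how the prescribed partition $T_1,\dots,T_m$ meets each component. For a fixed component $C$, restricting the partition gives a decomposition of $V(C)$ into the nonempty sets $T_j\cap V(C)$, each of size at most $k$. Call such a piece \emph{full} if $|T_j\cap V(C)|=k$ and \emph{partial} otherwise, and let $P(C)$ denote the total size of the partial pieces in $C$. Since $|V(C)|$ equals $k$ times the number of full pieces plus $P(C)$, reducing mod $k$ immediately yields $|V(C)|_k\leq P(C)$.

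Swapping the order of summation, $\sum_C|V(C)|_k \leq \sum_C P(C) = \sum_{j=1}^{m} P_j$, where $P_j$ denotes the total contribution of $T_j$ to partial pieces across all components. Setting $s_j=|T_j\cap S|$ and $f_j = |\{C:T_j\subseteq V(C)\}|$, the identity $|T_j|=s_j+\sum_C|T_j\cap V(C)|$ rearranges to $P_j=|T_j|-s_j-k f_j$. The key structural input is the connectedness of each $T_j$: for $j<m$, if $s_j=0$ then $T_j$ lies inside a single component of $G-S$, so $f_j=1$ and $P_j=0$; while if $s_j\geq 1$ then $|T_j\setminus S|<k$, so no component can contain $T_j$, giving $f_j=0$ and $P_j=k-s_j\leq(k-1)s_j$.

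Summing over $j<m$ yields $\sum_{j<m}P_j\leq (k-1)(|S|-s_m)$. The final part $T_m$ is handled by cases. If $|T_m|=k$ (equivalently $|V(G)|_k=0$), the same dichotomy applies to $T_m$ and we obtain $\sum_j P_j\leq (k-1)|S|$. If $|T_m|<k$, then $|T_m|=|V(G)|_k$, automatically $f_m=0$, and $P_m=|T_m|-s_m\leq |V(G)|_k$; combining gives $\sum_j P_j\leq (k-1)|S|+|V(G)|_k$ in this case as well. Dividing by $k-1$ gives exactly the stated inequality.

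The main obstacle is simply the bookkeeping for the short final part $T_m$: one has to verify that when $|T_m|<k$ the contribution of $T_m$ lands cleanly in the extra $|V(G)|_k/(k-1)$ term rather than inflating the $|S|$ term. Once that is handled, the rest of the argument is routine counting, relying only on the connectedness of each $T_j$ and the inequality $k-s_j\leq(k-1)s_j$ valid whenever $s_j\geq 1$.
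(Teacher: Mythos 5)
Your proof is correct. It rests on the same counting idea as the paper's: a part of size $k$ that is disjoint from $S$ is connected, hence lies inside a single component of $G-S$ and contributes nothing modulo $k$; a part meeting $S$ in $s_j$ vertices contributes at most $k-s_j\leq (k-1)s_j$ to the residues; and the short part $T_m$ accounts for the extra $|V(G)|_k/(k-1)$. The difference is in the packaging. The paper passes to the auxiliary subgraph $G'$ (retaining only the parts that meet $S$, together with $G[S]$ and $T_m$), then deletes $T_m$ to form $G^*$ and $S^*=S\setminus T_m$, and reduces everything to the clean intermediate claim $|S^*|\geq w_k(G^*,S^*)$, proved by noting that every component of $G^*-S^*$ has fewer than $k$ vertices. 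You instead carry out a single global double count: you bound $\sum_C |V(C)|_k$ by the total size of the partial pieces $T_j\cap V(C)$ and then charge each part's contribution $P_j=|T_j|-s_j-kf_j$ either to $(k-1)s_j$ or, for $T_m$, to $|V(G)|_k$. Your route avoids the graph surgery and the somewhat delicate bookkeeping about how components of $G-S$ decompose relative to $G'-S$, at the cost of introducing the quantities $P_j$ and $f_j$; the paper's route buys a modular reduction to a statement about $G^*$ alone. Both arguments are sound and yield the lemma exactly as stated.
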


\begin{proof}
	Note that either $|T_m| = k$ or $|T_m| = |V(G)|_k$.
	We begin by considering the following subgraph $G'$ of $G$:
	\[
		G' = \bigcup\{G[T_i] : T_i \cap S \neq \emptyset \} \cup G[S] \cup G[T_m].
	\]
	Observe that $S\subseteq V(G')$ and $|V(G)|_k = |V(G')|_k$.
	Further notice that the vertex set of each component of $G-S$ is a union of the vertex sets of components of $G'-S$, and possibly some of the sets $T_i$, $i < m$.
	Therefore, we get $w_k(G',S) \geq w_k(G,S)$.
	
	Consider the subgraph $G^* = G'-T_m$, and let $S^* = S \setminus T_m$.
	Since $|V(G^*)|_k = 0$, and $T_m$ has either $k$ vertices or $|V(G')|_k$ vertices, we obtain
	\begin{align*}
		\left(|S| + \frac{|V(G)|_k}{k-1}\right) - \left(|S^*| + \frac{|V(G^*)|_k}{k-1} \right) & = \left(|S| + \frac{|V(G')|_k}{k-1}\right) - \left(|S^*| + 0\right)\\ 
		&\geq \frac{|V(G')|_k}{k-1}\\
		&\geq w_k(G', S) - w_k(G^*, S^*)\\
		&\geq w_k(G, S) - w_k(G^*, S^*).
	\end{align*}

	To complete the proof, it suffices to show that $|S^*| \geq w_k(G^*, S^*)$.
	Each component of $G^*-S^*$ is of the form $T_i - S^*$ for some $i<m$, and each such $T_i$ has exactly $k$ vertices.
	Thus, we have 
	\[
		w_k(G^*, S^*) = \frac{|G^*-S^*|}{k-1}.
	\]
	
	Further, each vertex of $G^*$ is in some $T_i$, $i<m$. 
	The $T_i$ has at least one vertex of $S^*$ and at most $k-1$ vertices not in $S^*$. 
	Therefore, $|G^* - S^*| \leq (k-1)|S^*|$, so
	\[
		|S^*| \geq \frac{|G^*-S^*|}{k-1} = w_k(G^*, S^*),
	\]
	completing the proof.
\end{proof}

\begin{corollary}
	If $G$ is an AP (RP) graph with $S\subseteq V(G)$, and $k\geq 2$ a positive integer, then
	\[
	|S| + \frac{|V(G)|_k}{k-1} \geq w_k(G,S).
	\]
	\label{cor_modcut_lemma}
\end{corollary}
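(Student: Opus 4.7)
The plan is to derive the corollary as a near-immediate application of Lemma \ref{lem_modcut}, by producing a suitable partition of $V(G)$ that matches the hypothesis of that lemma. Since RP implies AP by Observation \ref{obs_pw_is_pi_is_pg}, it is enough to handle the case in which $G$ is AP.

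First I will set $n = |V(G)|$ and write $n = qk + r$, where $r = |V(G)|_k \in \{0, 1, \ldots, k-1\}$ and $q \geq 0$. Next I will consider the integer partition of $n$ consisting of $q$ copies of $k$ together with an additional part of size $r$ when $r > 0$, arranged in non-decreasing order to comply with the paper's convention. Because $G$ is AP, there is a corresponding partition $\{A_1, \ldots, A_{q+\varepsilon}\}$ of $V(G)$ with connected parts whose sizes realise this sequence (where $\varepsilon = 1$ if $r > 0$ and $\varepsilon = 0$ otherwise). After relabelling, I can write this as $\{T_1, \ldots, T_m\}$ with $|T_i| = k$ for all $i \leq m - 1$, and $|T_m| = r$ if $r > 0$ or $|T_m| = k$ if $r = 0$. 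In either case, $|T_m| \leq k$, so this partition meets the hypothesis of Lemma \ref{lem_modcut}.

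Invoking Lemma \ref{lem_modcut} with this partition then yields the desired inequality
\[
|S| + \frac{|V(G)|_k}{k-1} \geq w_k(G,S).
\]
The only edge case to mention is $n < k$, for which $q = 0$ and the partition is simply $\{V(G)\}$; the condition ``$|T_i| = k$ for $i \leq m-1$'' is then vacuous and $|T_m| = n < k$, so the lemma still applies. The substantive work—tracking the remainder term $\tfrac{|V(G)|_k}{k-1}$ carefully—has already been done inside Lemma \ref{lem_modcut}, so there is no real obstacle here beyond correctly invoking the AP property to extract the required partition.
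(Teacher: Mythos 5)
Your proof is correct and matches the paper's intended argument: the corollary is stated as an immediate consequence of Lemma \ref{lem_modcut}, obtained exactly as you do by using the AP property (with RP $\Rightarrow$ AP from Observation \ref{obs_pw_is_pi_is_pg}) to realise the integer partition $k,\dots,k,r$ by connected parts and then invoking the lemma.
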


\begin{thm}
	The graphs $K_2(1,1,1,2,4)$ and $K_2(1,1,2,2,3)$ are the unique minimal $(2,5)$ RP semistars.
	\label{thm_2_5_minimal}
\end{thm}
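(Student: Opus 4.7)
The plan is to apply Corollary \ref{cor_modcut_lemma} with $S$ the central $K_2$ to derive two simple numerical obstructions, and then run a finite case analysis. Both $K_2(1,1,2,2,3)$ and $K_2(1,1,1,2,4)$ are RP by Lemma \ref{lem_K21126k_parts}(v). Their sorted parameter tuples $(1,1,2,2,3)$ and $(1,1,1,2,4)$ are incomparable in the product order (the third coordinate is larger in the former, the fifth in the latter), so neither semistar is a subgraph of the other.

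For any $K_2(b_1,\dots,b_5) \in \mathcal{G}(2,5)$, taking $S = V(K_2)$ in Corollary \ref{cor_modcut_lemma} at $k=2$ and $k=3$, and noting $|V(G)|_2 \equiv \sum b_i \pmod{2}$ and $|V(G)|_3 \leq 2$, yields the following necessary conditions for RPness:
\begin{itemize}
    \item[(A)] at most three of the $b_i$ are odd,
    \item[(B)] $\sum_{i=1}^{5} (b_i \bmod 3) \leq 6$.
\end{itemize}
These two conditions will do all the work in the remainder of the proof.

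To prove minimality of $K_2(1,1,2,2,3)$, I would enumerate the six sorted tuples of positive integers strictly dominated by $(1,1,2,2,3)$ in the product order, namely $(1,1,1,1,1)$, $(1,1,1,1,2)$, $(1,1,1,1,3)$, $(1,1,1,2,2)$, $(1,1,1,2,3)$, $(1,1,2,2,2)$, and check that each violates (A) or (B): four of them have at least four odd entries (failing (A)), while $(1,1,1,2,2)$ and $(1,1,2,2,2)$ give $\sum (b_i \bmod 3) = 7$ and $8$ respectively (failing (B)). Minimality of $K_2(1,1,1,2,4)$ reduces to the same list, with $(1,1,2,2,2)$ replaced by $(1,1,1,1,4)$, which has four odd entries.

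For uniqueness, I would show that any sorted positive tuple $(b_1,\dots,b_5)$ for which $K_2(b_1,\dots,b_5)$ contains neither $K_2(1,1,2,2,3)$ nor $K_2(1,1,1,2,4)$ as a subgraph falls into exactly one of: the infinite family $(1,1,1,1,b_5)$ with $b_5 \geq 1$; one of the three sporadic tuples $(1,1,1,2,2)$, $(1,1,1,2,3)$, $(1,1,1,3,3)$; or one of $(1,1,2,2,2)$, $(1,2,2,2,2)$, $(2,2,2,2,2)$. The infinite family fails (A) for every $b_5$, since it contains at least four odd entries. Each of the six sporadic tuples either fails (A) with four or five odd entries, or fails (B) with $\sum (b_i \bmod 3) \in \{7,8,9,10\}$. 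The main obstacle is the combinatorial bookkeeping---identifying the correct shape decomposition of tuples dominating neither minimum and verifying the coverage is exhaustive---rather than any delicate arithmetic.
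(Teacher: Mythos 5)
Your proposal is correct, but it takes a genuinely different route from the paper's. The paper argues top-down from minimality: taking a minimal $(2,5)$ RP semistar $G=K_2(b_1,\dots,b_5)$, it splits off $1$, $2$, and then $3$ vertices using the RP property, and uses minimality together with the fact that no semistar with central clique $K_1$ and four or five branches is RP to force $b_1=1$, some $b_i=2$, and then either $b_2=1$ or some $b_i=3$; Corollary~\ref{cor_modcut_lemma} (with $k=2,3$) is invoked only at the end to settle the residual families $K_2(1,1,1,2,t)$, $K_2(1,1,2,2,t)$ and $K_2(1,2,3,s,t)$. You instead let Corollary~\ref{cor_modcut_lemma} do all the work: your conditions (A) and (B) are correctly derived (with $S$ the central $K_2$ the components of $G-S$ are the branch cliques, and the $k=3$ inequality does reduce to $\sum_i (b_i\bmod 3)\le 6$), your seven-shape classification of sorted tuples dominating neither $(1,1,2,2,3)$ nor $(1,1,1,2,4)$ is exhaustive (the case split $b_5\le 2$, $b_5=3$, $b_5\ge 4$ checks out), and each such tuple indeed violates (A) or (B). This yields the slightly stronger fact that every RP $(2,5)$ semistar contains one of the two candidates, with no structural removal argument; the paper's route buys brevity and avoids exhaustive enumeration, while yours is more self-contained and mechanical. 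One caution: you silently identify ``proper $(2,5)$-semistar subgraph'' with strict domination of sorted tuples. Under the literal subgraph relation in the paper's definition this is not automatic: for instance $K_2(1,1,2,2,2)$ embeds as a non-induced subgraph of $K_2(1,1,1,2,4)$ by placing two of its $2$-branches inside the $4$-branch, even though $(1,1,2,2,2)\not\le(1,1,1,2,4)$ coordinatewise, and the same issue underlies your inference that incomparability of the two tuples means neither graph is a subgraph of the other. This is harmless here --- $K_2(1,1,2,2,2)$ also fails (B), and Remark~\ref{rem:fin_min_semistars} makes the same identification --- but you should either state that identification (central vertices of a sub-semistar must map to the host's central vertices by degree, and each branch lands inside a single host branch, though distinct branches may share one) or simply note that your classification already shows every RP $(2,5)$ semistar has at least $11$ vertices, which rules out any RP proper sub-semistar of either $11$-vertex candidate apart from the other candidate, and a short packing/edge-count check excludes that.
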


\begin{proof}
	Let $G = K_2(b_1,\dots,b_5)$ be a minimal $(2,5)$ RP semistar with $b_1\leq \dots \leq b_5$.
	We can remove a single vertex of $G$ and still have an RP graph remaining.
	By minimality of $G$, and since no $(1,5)$ semistar is RP, we have $b_1=1$.
	Similarly, we can remove two vertices, so $b_i=2$ for some $i$.
	There are two possibilities for removing three vertices.
	
	\textit{Case 1:} The RP subgraph induced by the three removed vertices is $K_1(1,1)$, so $b_2 = 1$, and $G=K_2(1,1,2,s,t)$ for some positive integers $s\leq t$.
	If $s=1$, then $t\geq 4$ by Corollary \ref{cor_modcut_lemma} (consider $k=2$ and $k=3$). 
	Thus, if $s=1$, the single minimal RP semistar is $K_2(1,1,1,2,4)$.
	If $s=2$, then $t\geq 3$ since $K_2(1,1,1,2,2)$ and $K_2(1,1,2,2,2)$ are not RP by Corollary \ref{cor_modcut_lemma} with $k=3$. 
	So when $s=2$, the only minimal RP semistar is $K_2(1,1,2,2,3)$.
	When $s\geq 3$, we have $K_2(1,1,2,2,3) < K_2(1,1,2,s,t)$, and if $s\geq 4$, then $K_2(1,1,1,2,4) < K_2(1,1,2,s,t)$, which proves uniqueness in Case 1.
	
	\textit{Case 2:} The RP subgraph induced by the three removed vertices is a $K_3$, so $b_i = 3$ for some $i$.
	Thus, $G = K_2(1,2,3,s,t)$ for some $1\leq s \leq t$.
	An analysis similar to that in Case 1 shows that $K_2(1,1,2,2,3)$ is the only minimal RP semistar in Case 2.
\end{proof}

\begin{corollary}
	Let $G$ be an RP graph of order $n$.
	If $G$ has a cut $S$ with $|S|=2$ and $c(G-S) = 5$, then $n\geq 11$.
	\label{cor_2_5_order_11}
\end{corollary}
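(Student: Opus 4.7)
The plan is to push the component sizes through the universality of semistars and then invoke the minimality classification just established. Let $G$ be an RP graph with a $2$-vertex cut $S$ such that $G-S$ has exactly $5$ components, and denote by $c_1, \dots, c_5$ the orders of these components. By Observation~\ref{obs_K_universal}, the semistar $K_2(c_1, \dots, c_5)$ is itself RP, so it is a (not necessarily minimal) $(2,5)$ RP semistar.

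Next I would argue that $K_2(c_1, \dots, c_5)$ must contain one of the two minimal $(2,5)$ RP semistars as a subgraph. If it is not itself minimal, then by definition some $K_2(c_1', \dots, c_5')$ is a proper RP subgraph; iterating this and invoking Remark~\ref{rem:fin_min_semistars} (finiteness of the minimal semistars, essentially an application of Dickson's lemma on the poset $\mathcal{G}(2,5)$) shows the descent terminates at a minimal $(2,5)$ RP semistar. By Theorem~\ref{thm_2_5_minimal}, this minimal semistar is either $K_2(1,1,1,2,4)$ or $K_2(1,1,2,2,3)$.

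In either case, the sizes $b_1 \leq \dots \leq b_5$ of that minimal semistar satisfy $b_1 + \dots + b_5 = 9$. Since $K_2(c_1,\dots,c_5)$ contains $K_2(b_1,\dots,b_5)$ as a subgraph (after sorting), we get $c_i \geq b_i$ componentwise and so $c_1 + \dots + c_5 \geq 9$. Therefore
\[
    n = |S| + \sum_{i=1}^5 c_i \geq 2 + 9 = 11,
\]
which is the desired bound.

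There is no real obstacle here: the entire argument is an unpacking of the definitions, with Observation~\ref{obs_K_universal} translating the graph-theoretic hypothesis into a statement about semistars and Theorem~\ref{thm_2_5_minimal} supplying the exhaustive list of minimal examples. The only small care needed is to verify that both $K_2(1,1,1,2,4)$ and $K_2(1,1,2,2,3)$ have exactly $11$ vertices, so that the bound does not depend on which minimal semistar sits inside $K_2(c_1, \dots, c_5)$.
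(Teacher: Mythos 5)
Your proposal is correct and follows essentially the same route as the paper, which states this corollary as an immediate consequence of Observation~\ref{obs_K_universal} together with Theorem~\ref{thm_2_5_minimal}: the semistar $K_2(c_1,\dots,c_5)$ is RP, hence contains a minimal $(2,5)$ RP semistar, and both $K_2(1,1,1,2,4)$ and $K_2(1,1,2,2,3)$ have $11$ vertices. Your extra care about termination of the descent (via Remark~\ref{rem:fin_min_semistars}) is a reasonable way to make explicit what the paper leaves implicit.
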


\begin{prop}
	The graph $K_3(1,1,1,2,2,3,4,6)$ is a minimal $(3,8)$ RP semistar.
	\label{prop_minimal_K3_11122346}
\end{prop}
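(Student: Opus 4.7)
The plan is to combine the spanning-subgraph monotonicity of the RP property (Lemma \ref{lem_spanning_subgraph}) with the generalised Tutte-type condition (Corollary \ref{cor_modcut_lemma}) to reduce the verification to a finite case check. The graph $K_3(1,1,1,2,2,3,4,6)$ is itself RP by Theorem \ref{thm_S3_k8}, so only the non-existence of a proper RP subsemistar remains.

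First I would observe that if $K_3(c_1,\dots,c_8)$ is RP and each $c_i \leq d_i$, then $K_3(d_1,\dots,d_8)$ is RP by Lemma \ref{lem_spanning_subgraph}. Consequently, to show that no proper subsemistar is RP, it suffices to check only the \emph{maximal} proper subsemistars of $K_3(1,1,1,2,2,3,4,6)$. Since the first three coordinates already equal $1$ and cannot be decreased (the definition requires positive $c_i$), the maximal proper subsemistars, up to reordering of parts, are exactly
\begin{align*}
	H_1 &= K_3(1,1,1,1,2,3,4,6), & H_2 &= K_3(1,1,1,2,2,2,4,6), \\
	H_3 &= K_3(1,1,1,2,2,3,3,6), & H_4 &= K_3(1,1,1,2,2,3,4,5),
\end{align*}
each of order $22$.

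Next I would apply Corollary \ref{cor_modcut_lemma} to each $H_i$ with the natural cut $S = V(K_3)$ of size $3$, so that $H_i-S$ is the disjoint union of the eight cliques of sizes $c_1,\dots,c_8$. For $H_1$, $H_3$, $H_4$ the test with $k=2$ suffices: the part sizes have parities $(1,1,1,1,0,1,0,0)$, $(1,1,1,0,0,1,1,0)$, and $(1,1,1,0,0,1,0,1)$ respectively, each summing to $w_2 = 5$, while $|S| + |V(H_i)|_2 = 3 + 0 = 3$, so the inequality fails. For $H_2$ the $k=2$ parity sum is only $3$ and the test is inconclusive; instead I would apply $k=3$, for which the part-size residues $(1,1,1,2,2,2,1,0)$ sum to $10$, giving $w_3(H_2,S) = 5$, while $|S| + |V(H_2)|_3/2 = 3 + 1/2 < 5$. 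In every case Corollary \ref{cor_modcut_lemma} is violated, so none of $H_1,\dots,H_4$ is RP, and the minimality of $K_3(1,1,1,2,2,3,4,6)$ follows.

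The only non-routine step is selecting the correct modulus $k$ for $H_2$: the parity test is defeated because the extra part of size $2$ leaves only three odd components; switching to $k=3$ exploits the three parts of size $2$, each contributing residue $2$ and driving $w_3$ above the bound. The remaining three cases are straightforward bookkeeping.
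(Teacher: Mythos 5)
Your verifications that $H_1,\dots,H_4$ violate Corollary \ref{cor_modcut_lemma} are correct, but the reduction to these four graphs rests on a false claim, and this is a genuine gap. You assert that if $K_3(c_1,\dots,c_8)$ is RP and $c_i\leq d_i$ for all $i$, then $K_3(d_1,\dots,d_8)$ is RP ``by Lemma \ref{lem_spanning_subgraph}.'' That lemma applies only to \emph{spanning} subgraphs, i.e.\ subgraphs on the same vertex set; when some $c_i<d_i$ the smaller semistar has strictly fewer vertices, so the lemma says nothing. In fact the RP property is not upward monotone in the part sizes: $K_1(1,1,2)$ is RP while $K_1(1,1,3)$ is not (by Theorem \ref{thm:pw_tree_characterisation}, $T(1,1,c)$ and hence $K_1(1,1,c)$ is RP only for $c$ even; alternatively $K_1(1,1,3)$ has three odd components after deleting one vertex, so it is not even matchable). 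The paper's open Question 3 --- whether deleting a whole part of an RP semistar preserves RP --- further underscores that no such monotonicity is available. Consequently, excluding only the maximal proper subsemistars does not exclude the many smaller ones, e.g.\ $K_3(1,1,1,1,2,2,4,6)$ or $K_3(1,1,1,2,2,2,4,4)$, any of which would destroy minimality if it were RP.

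The paper instead observes that every proper subsemistar with eight positive parts must have the form $K_3(1,1,1,b_1,\dots,b_5)$ with $1\leq b_1\leq\dots\leq b_5$ (the three smallest parts of $G$ are already singletons), and then runs a case analysis on $b_1\in\{1,2\}$, using Corollary \ref{cor_modcut_lemma} with $k=2$, $k=3$, and in one final case $k=5$, to rule out \emph{all} such candidates. To repair your argument you would need to carry out an exhaustive check of this kind (the candidate set is finite, so your computations for $H_1,\dots,H_4$ become four instances of a longer case analysis), or supply a genuine monotonicity statement, which is not currently known.
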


\begin{proof}
	Let $G = K_3(1,1,1,2,2,3,4,6)$. 
	By Theorem \ref{thm_S3_k8}, this graph is RP.
	To prove minimality, it suffices to show that $G$ does not have an RP proper subgraph of the form $H = K_3(1,1,1,b_1,b_2, b_3, b_4, b_5)$, where $1\leq b_1\leq \dots \leq b_5$.
	Assume to the contrary that it does, and let $S$ be the 3-vertex cut of $H$.
	
	\textit{Case 1:} $b_1 = 1$. 
	If $b_1 = 1$, then $b_2,\dots, b_5$ are all even integers by Corollary \ref{cor_modcut_lemma}.
	Thus, $b_2 = b_3 = 2$ and $b_4\in \{2,4\}$.
	However, this contradicts Corollary \ref{cor_modcut_lemma} when $k=3$.
	
	\textit{Case 2:} $b_1 = 2$.
	Then $b_2 = 2$ and further, $b_3 = 3$.
	For if $b_3 = 2$, then $H$ is not RP per Corollary \ref{cor_modcut_lemma} (let $k=3$).
	Since $b_3 = 3$, $H$ has four odd components (the maximum number permitted by Corollary \ref{cor_modcut_lemma}), so $b_4$ and $b_5$ are even.
	Thus, $b_4 = 4$, and $b_5 \in \{4,6,8,\dots\}$.
	However, if $b_5=4$, then $w_5(H-S)> 4$, contradicting Corollary \ref{cor_modcut_lemma}.
	
	In either case, we derive a contradiction, so $G$ does not have such a subgraph $H$.
\end{proof}
 
\section{Bounding \bm{$c(G-S)$} from above}\label{sec:upbound}
In this section, we show that RP graphs are $\frac{1}{3}$-tough.
We have seen that there exist RP graphs with a cut-vertex $v$ such that $c(G-v)=3$. 
However, as we show in Theorem~\ref{thm:3toughnessupperbound}, for cuts $S$ of greater size in RP graphs, we must have $c(G-S)<3 |S|$, and this bound is sharp when $|S|=2$ or $|S|=3$.

We say that an RP graph $G$ of order $n$ is \emph{minimal with respect to $S$}, if there is no $(\lambda,n-\lambda)$-partition, for any $\lambda$, of $G$ into RP graphs $G_1$ and $G_2$ such that $G_1$ is a proper induced subgraph of any of the connected components of $G-S$. 

\begin{thm}\label{thm:3toughnessupperbound}
	Let $S$ be a cut of a graph $G$ with $|S|\geq 2$.
	If $c(G-S) \geq 3|S| $, then $G$ is not RP.
	\label{thm_c_geq_3s_plus_1}
\end{thm}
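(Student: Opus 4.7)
The plan is to induct on $n=|V(G)|$, treating $|S|=2$ as the base case. For $|S|=2$, Observation~\ref{obs_tripode_balloon_universal} would force the balloon $B(c_1,\dots,c_k)$ to be RP, which by Theorem~\ref{thm_rp_balloon_toughness} requires $k\leq 5<6=3|S|$. Assume now the statement holds for all RP graphs of smaller order, let $s=|S|\geq 3$, and suppose for contradiction that $G$ is RP with $k=c(G-S)\geq 3s$. By Observation~\ref{obs_K_universal} I may replace $G$ with the semistar $K_s(c_1,\dots,c_k)$, and I may further assume $G$ is minimal with respect to $S$: otherwise one could prune a proper RP subgraph of some $C_i$ to obtain a strictly smaller RP graph whose $s$-cut $S$ still has $k$ components, violating the minimum-order choice.

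The heart of the argument is the claim that, for every $\lambda\in\{1,\dots,n-1\}$, the RP $(\lambda,n-\lambda)$-partition $\{A,B\}$ guaranteed by Proposition~\ref{prop_simple_RP_char} must have $A$ or $B$ equal to a whole component of $G-S$. Let $t=|A\cap S|$. If $t=0$, connectedness of $G[A]$ forces $A\subseteq C_i$ for some $i$, and minimality upgrades this to $A=C_i$; the case $t=s$ is symmetric and yields $B=C_j$. So suppose $1\leq t\leq s-1$. Then $G[A]\simeq K_t(a_1,\dots,a_r)$ is RP of order $\lambda<n$, where $r$ is the number of components of $G-S$ meeting $A$; and $B$ has cut $S\setminus A$ of size $s-t$ with $k-r_0$ components, where $r_0$ counts the components of $G-S$ entirely absorbed into $A$. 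For $t=1$, Observation~\ref{obs_tripode_balloon_universal} combined with Theorem~\ref{thm:pw_tree_characterisation} gives $r\leq 3$; for $t\geq 2$, the inductive hypothesis applied to $G[A]$ gives $r\leq 3t-1$. In either case $r_0\leq r\leq 3t$. If $s-t\geq 2$, then $B$ is RP of order less than $n$ with an $(s-t)$-cut and $k-r_0\geq 3s-3t=3(s-t)$ components, contradicting the inductive hypothesis. If $s-t=1$, then $t=s-1\geq 2$ forces the sharper bound $r\leq 3s-4$, so $k-r_0\geq 4$; thus $B$ is RP with a cut-vertex leaving at least four components, again contradicting Theorem~\ref{thm:pw_tree_characterisation}. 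This proves the claim.

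Applying the claim, $G-S$ contains, for every $\lambda\in\{1,\dots,n-1\}$, a component of size $\lambda$ or of size $n-\lambda$. Since $c_1+\dots+c_k=n-s$ with each $c_i\geq 1$, the largest component satisfies $c_k\leq n-s-(k-1)\leq n-4s+1$, so $n-\lambda$ can be a component size only when $\lambda\geq s+k-1$. Hence for every $\lambda\in\{1,2,\dots,s+k-2\}$ the graph $G-S$ has a component of size exactly $\lambda$, producing $s+k-2$ distinct component sizes. Since $G-S$ has only $k$ components, $s+k-2\leq k$, i.e., $s\leq 2$, contradicting $s\geq 3$.

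The main obstacle I anticipate is the subcase $s-t=1$ of the key claim: one must establish the sharper bound $r\leq 3t-1$ (rather than the looser $r\leq 3t$ furnished by the tripode theorem) in order to force at least four components in $B$, which then collides with the tripode characterisation. Arranging the induction so that this strictly inductive bound is genuinely available at the boundary case $t=s-1$ is the crux of the proof.
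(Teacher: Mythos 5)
Your proof is correct, but it reaches the bound by a recognisably different route than the paper, so a comparison is worthwhile. The shared engine is the same: in a configuration minimal with respect to $S$, an RP bipartition $\{A,B\}$ that straddles $S$ gives $c(G-S)\leq c(G[A]-S_1)+c(G[B]-S_2)\leq 3|S_1|+3|S_2|-1=3|S|-1$, with the $|S_i|=1$ pieces handled by the tripode characterisation (Theorem~\ref{thm:pw_tree_characterisation} via Observation~\ref{obs_tripode_balloon_universal}) and the larger pieces by induction; your subcase analysis ($t=1$ versus $t\geq 2$, and the boundary $s-t=1$ rescued by $t=s-1\geq 2$) is exactly this inequality unpacked, and it is sound. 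The difference is in how the degenerate bipartitions (one part lying inside a component) are dealt with, and in the induction parameter. The paper inducts on $|S|$ and sidesteps the degenerate case entirely by choosing a single clever size, $\lambda=|C_k|+1$ or $\lambda=|C_{k-1}|+1$, for which neither part can sit inside a component; one application of the displayed inequality then finishes. You instead induct on the order $n$, show that for \emph{every} $\lambda$ the degenerate case is the only possibility --- every RP bipartition of a minimal configuration peels off exactly one whole component of $G-S$ --- and then close with a pigeonhole: components would have to realise every size $1,\dots,s+k-2$, which exceeds the number $k$ of components once $s\geq 3$. Your version is longer and needs the all-$\lambda$ structural claim plus the size count, but it buys a stronger intermediate statement about minimal configurations and, pleasantly, makes the minimality reduction immediate (a non-minimal example of order $n$ already contradicts the induction hypothesis at smaller order), whereas the paper's induction on $|S|$ leaves that pruning step implicit. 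Two cosmetic points: after pruning, the number of components can only increase (it stays exactly $k$ only because you passed to the semistar, where removing part of a leaf clique leaves a clique), and your ``base case $|S|=2$'' is really a case handled uniformly for all $n$ rather than a base of the induction on $n$; neither affects correctness.
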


\begin{proof}
By Theorem \ref{thm_rp_balloon_toughness} and Observation \ref{obs_tripode_balloon_universal}, the result holds when $|S|=s=2$.
Further, per Theorem \ref{thm:pw_tree_characterisation}, if $|S|=1$, then $c(G-S) \leq 3$.
We proceed by strong induction, assuming the result holds for all integers $i$ such that $2\leq i < s$.
    
Suppose that $G$ is RP, and let $S$ be a cut in $G$, with $|S|=s\geq 3$. 
We can assume that $G$ is minimal with respect to $S$. 
Let $C_1,C_2,\dots,C_k$ be the connected components of $G-S$, with $|C_1|\leq |C_2|\leq \dots \leq |C_k|$. 
Suppose that $|C_k|=|C_{k-1}|+1$. 
Let $\lambda=|C_k|+1$ and find a $(\lambda,n-\lambda)$-partition of $G$ into RP graphs $G_1$ and $G_2$.
Since $|C_i|\notin \{\lambda,n-\lambda\}$ for every $1\leq i \leq k$, we must have that both $S_1=S\cap G_1$ and $S_2=S\cap G_2$ are non-empty. 
Furthermore, since $|S|\geq 3$, we cannot have that $|S_1|=|S_2|=1$. Therefore, by induction, we have
\[ c(G-S)\leq c(G_1-S_1)+c(G_2-S_2) \leq 3|S_1|+3|S_2|-1=3|S|-1. \]
Now, suppose that $|C_k|\neq|C_{k-1}|+1$. Let $\lambda=|C_{k-1}|+1$. 
Find a $(\lambda,n-\lambda)$-partition of $G$ into graphs $G_1$ and $G_2$, and recall that, by minimality, we cannot have $G_1 \leq C_k$. 
Then, a similar argument holds.
\end{proof}

As Theorems \ref{thm:pw_tree_characterisation} and \ref{thm_S3_k8} demonstrate, the bound in Theorem \ref{thm_c_geq_3s_plus_1} is sharp when $|S|\in \{2,3\}$.

\begin{corollary}
	Every RP graph is $\frac{1}{3}$-tough.
	\label{cor_pw_1_over_3_tough}
\end{corollary}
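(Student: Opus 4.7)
The plan is to package Theorem \ref{thm:3toughnessupperbound} directly into the definition of toughness, handling the residual case $|S|=1$ separately via the tree characterisation. Given an RP graph $G$ and any cut $S\subseteq V(G)$ (so $c(G-S)\geq 2$), the aim will be to verify $|S|/c(G-S)\geq 1/3$ in all cases; taking the minimum over such cuts then yields $\tau(G)\geq 1/3$.

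First I would dispose of the case $|S|\geq 2$: the contrapositive of Theorem \ref{thm:3toughnessupperbound} gives $c(G-S)\leq 3|S|-1$, whence
\[
\frac{|S|}{c(G-S)}\;\geq\;\frac{|S|}{3|S|-1}\;>\;\frac{1}{3}.
\]
In fact this is strictly better than what is claimed.

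For the remaining case $|S|=1$, write $S=\{v\}$ and apply Observation \ref{obs_tripode_balloon_universal}: the tripode on the component orders of $G-v$ must itself be RP. Since Theorem \ref{thm:pw_tree_characterisation} characterises the RP trees as paths and certain tripodes (all of which have at most three branches from any given vertex), we get $c(G-v)\leq 3$, so $|S|/c(G-S)\geq 1/3$ here as well. This was already noted at the beginning of Section \ref{sec:lowbounds}, so it can simply be cited.

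There is essentially no obstacle: Theorem \ref{thm:3toughnessupperbound} has done all the hard work, and this corollary is just the translation of the bound $c(G-S)\leq 3|S|-1$ into the ratio that defines toughness.
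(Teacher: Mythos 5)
Your proof is correct and is exactly the argument the paper intends: the case $|S|\geq 2$ follows from the contrapositive of Theorem \ref{thm:3toughnessupperbound}, and the case $|S|=1$ from Theorem \ref{thm:pw_tree_characterisation} together with Observation \ref{obs_tripode_balloon_universal}, as already noted in Section \ref{sec:lowbounds}. No gaps; this matches the paper's (implicit) derivation of the corollary.
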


\section{Further Questions}\label{sec:quests}
We mention a few open questions.
\begin{enumerate}
    \item Consider all pairs $(G,S)$, where $G$ is an RP graph and $S$ is an $s$-vertex subset of $V(G)$, and let $\zeta(s) = \max\{c(G-S) : (G,S)\}$.
When $s>1$, Theorem \ref{thm_c_geq_3s_plus_1} and Corollary \ref{cor_2kplus1_family} show that $2s+1 \leq \zeta(k) \leq 3s-1$. 
Can either of these bounds be improved? 
Is the $3s-1$ upper bound sharp?
\item Is there some constant $c$ such that every $c$-tough graph is AP (RP)?
\item If $K_{b_0}(b_1, b_2, \dots, b_k)$ is RP, is the graph $K_{b_0}(b_1, \dots b_{i-1}, b_{i+1}, \dots, b_k)$ also RP for any $i \in \{1, 2, \ldots, k\}$?
\item In light of Remark \ref{rem:fin_min_semistars} and Proposition \ref{prop_minimal_K3_11122346}, $K_3(1,1,1,2,2,3,4,6)$ is one of finitely many minimal $(3,8)$ RP semistars. 
Are there others? 
If so, what are they?
\item Both minimal $(2,5)$ RP semistars are subgraphs of infinitely many $(2,5)$ RP semistars. For example, $K_2(1,1,2,2,3)$ is a subgraph of every $K_2(1,1,2,3,k)$ where $k\equiv 0 \pmod 2$ is positive, and $K_2(1,1,1,2,4)$ is a subgraph of $K_2(1,1,2,6,k)$ where $k\geq 1$.
Is $K_3(1,1,1,2,2,3,4,6)$ a subgraph of infinitely many $(3,8)$ RP semistars?
\item Do there exist pairs of positive integers $(b_0, k)$ for which there exist a finite, but positive number of $(b_0, k)$ RP semistars?
\end{enumerate}
 
\section*{Acknowledgements}

This work is supported by the National Science Foundation under NSF Award 2015425. Any opinions, findings, and conclusions or recommendations expressed in this material are those of the authors and do not necessarily reflect those of the National Science Foundation.

We thank the organisers and founders of the SAMSA-Masamu program, without whom this research would not have been possible. Special thanks to Peter Johnson and Hunter Rehm for their valuable input on the project.

\end{document}